\newcommand{\xx}{\mathbf{x}}
\newcommand{\yy}{\mathbf{y}}
\newcommand{\TT}{\mathbb{T}}
\newcommand{\Z}{\mathbb{Z}}
\newcommand{\R}{\mathbb{R}}
\newcommand{\A}{\mathcal{A}}
\newcommand{\ZZ}{\mathcal{Z}}
\renewcommand{\a}{\mathbf{a}}
\newcommand{\brho}{\bm{\rho}}
\newcommand{\s}{\mathbf{s}}
\renewcommand{\t}{\mathbf{t}}
\newcommand{\gr}{\text{\textnormal{gr}}}
\renewcommand{\H}{\mathcal{H}}
\newcommand{\Vect}{\text{\textnormal{Vect}}}
\renewcommand{\epsilon}{\varepsilon}
\newcommand{\bdry}{\partial}
\newcommand{\be}{\begin{enumerate}}
\newcommand{\ee}{\end{enumerate}}
\newtheorem{thm}{Theorem}[section]
\newtheorem{prop}[thm]{Proposition}
\newtheorem{definition}[thm]{Definition}
\theoremstyle{remark}
\newtheorem{rmk}[thm]{Remark}
\numberwithin{equation}{subsection}
\begin{document}

\title{A topological grading on bordered Heegaard Floer homology}

\author{Yang Huang}
\address{Max-Planck-Institut f\"ur Mathematik, Bonn\\Germany}
\email{yhuang@mpim-bonn.mpg.de}

\author{Vinicius G. B. Ramos}
\address{Universit\'e de Nantes, Nantes \\France}
\email{vinicius.ramos@univ-nantes.fr}

\begin{abstract}
In this paper, we construct a canonical grading on bordered Heegaard Floer homology by homotopy classes of nonvanishing vector fields. This grading is a generalization of our construction of an absolute grading on Heegaard Floer homology and it extends the well-known grading with values in a noncommutative group defined in~\cite{lot}.
\end{abstract}

\maketitle

\section{Introduction}
For a closed oriented 3-manifold $Y$, Ozsv\'{a}th and Szab\'{o}~\cite{osz} defined the Heegaard Floer homology groups $\widehat{HF}(Y)$, $HF^{\infty}(Y)$, $HF^-(Y)$ and $HF^+(Y)$, which are invariants of $Y$. These groups split into direct sums of groups by Spin$^c$ structures, each of which has a relative grading taking values in an appropriate cyclic group. In~\cite{gh}, we constructed a canonical absolute grading for these groups taking values in the set of homotopy classes of oriented 2-plane fields, or equivalently, the set of homotopy classes of nonvanishing vector fields. In this paper, we will define a similar geometric grading on bordered Heegaard Floer homology.

We start by briefly reviewing the construction of bordered Heegaard Floer homology, following~\cite{lot}. Consider a compact oriented 3-manifold $Y$ with non-empty connected boundary. A parametrization of $\partial Y$ is an orientation preserving diffeomorphism $\phi:\partial Y\to F$, where $F$ is a closed oriented surface with a prescribed handle decomposition. According to~\cite{lot}, one can associate to $F$ a differential graded algebra  $\mathcal{A}(F)$. See~\S\ref{sec:alg} for the precise definition of $\mathcal{A}(F)$. Then one defines the so-called {\em type A} and {\em type D} modules of $Y$, denoted by $\widehat{\mathit{CFA}}(Y)$ and $\widehat{\mathit{CFD}}(Y)$. The type $A$ module $\widehat{\mathit{CFA}}(Y)$ is a right $\mathcal{A}_{\infty}$-module over $\A(F)$. That means that there exist maps $$m_l:\widehat{\mathit{CFA}}(Y)\otimes\A(F)^{\otimes (l-1)}\to\widehat{\mathit{CFA}}(Y),$$
satisfying the $\mathcal{A}_{\infty}$-relations, see e.g.~\cite[Eq. (2.6)]{lot}. Here the tensor product is taken over an appropriate ring, as we will review in \S\ref{gr:cfa}. The type $D$ module $\widehat{\mathit{CFD}}(Y)$ is a left differential module over $\A(-F)$, that is, there exists a map $\partial:\widehat{\mathit{CFD}}(Y)\to\widehat{\mathit{CFD}}(Y)$, which squares to $0$ and which satisfies the Leibniz rule with respect to the left action of $\A(-F)$.
It is also shown in~\cite{lot} that if $Y_1$ and $Y_2$ are compact 3-manifolds such that $\partial Y_1=-\partial Y_2$, then there is a homotopy equivalence
\begin{equation}
\Phi:\widehat{\mathit{CFA}}(Y_1)~\widetilde\otimes~\widehat{\mathit{CFD}}(Y_2)\to\widehat{\mathit{CF}}(Y_1\cup_F Y_2).\label{eq:equiv}
\end{equation}
Here $\widetilde\otimes$ denotes the derived tensor product.
For a closed oriented 3-manifold $Y$, we denote by $\textrm{Vect}(Y)$ the set of homotopy classes of nonvanishing vector fields on $Y$.
The goal of this paper is to prove the following theorems.

\begin{thm}\label{thm:alg}
 Given a parameterized surface $F$ as above, there exist a groupoid $G(F)$, with a $\Z$-action denoted by $\lambda^n$ for a given $n\in\Z$, and a grading function $\gr$ with values in $G(F)$ satisfying the following conditions:
 \begin{enumerate}
  \item If $a,b$ are two composable generators of $\A(F)$, then $\gr(a\cdot b)=\gr(a)\cdot \gr(b)$.
  \item If $a$ is a generator of $\A(F)$, then $\gr(\partial a)=\lambda^{-1}\gr(a)$.
 \end{enumerate}
 \end{thm}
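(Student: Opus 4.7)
The plan is to imitate the construction from~\cite{gh} in this bordered setting, with the groupoid $G(F)$ playing the role that $\Vect(Y)$ played in the closed case. I would take the objects of $G(F)$ to be homotopy classes of nonvanishing vector fields on a collar $F\times[-\epsilon,0]$ that agree with a fixed reference near the boundary parametrization data (e.g.\ pointing in a standard direction along the pointed matched circle), and the morphisms from $v_0$ to $v_1$ to be homotopy classes, rel boundary, of nonvanishing vector fields on $F\times[0,1]$ that restrict to $v_0$ on $F\times\{0\}$ and $v_1$ on $F\times\{1\}$. Composition is concatenation of cylinders. The $\Z$-action $\lambda$ is given by the standard ``Hopf twist'' supported in an embedded ball away from the boundary, which shifts a morphism to one that differs by the generator of $\pi_3(S^2)$; so $\lambda^n$ shifts the relative $d_3$ by $n$.

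Next I would define the grading function $\gr$ on generators of $\A(F)$. A generator is specified by an idempotent pair $(\iota_s,\iota_t)$ together with a set of Reeb chords on the pointed matched circle $\ZZ$ parametrizing $F$. To each idempotent I would associate a specific nonvanishing vector field on $F$ near the parametrization, in the spirit of the ``contact class'' construction of~\cite{gh}: use the pointed matched circle to build a handle decomposition on a collar, and from this a Morse-theoretic vector field whose zero cancellation pattern encodes the chosen subset of matched pairs. To the collection of Reeb chords I would associate an explicit interpolating vector field on $F\times[0,1]$ given by flowing along a model thickening of the chord diagram, so that $\gr(a)$ is the resulting morphism of $G(F)$ from $\gr(\iota_s)$ to $\gr(\iota_t)$.

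Multiplicativity in condition~(1) should then be essentially tautological: if $a$ and $b$ are composable, so that the target idempotent of $b$ is the source idempotent of $a$, then the vector field associated to $a\cdot b$ is (up to homotopy rel boundary) the stacking of the vector fields for $a$ and $b$ across the common collar, and stacking is precisely composition in $G(F)$. For condition~(2), I would analyze a single term in the differential of $a$ as a local resolution of a crossing or height-$2$ chord into two height-$1$ chords. In $F\times I$ this corresponds to a Cerf-type modification of the interpolating vector field in a disk neighborhood of the crossing, and by a model computation the modification shifts the relative Hopf invariant by exactly $-1$, i.e.\ applies $\lambda^{-1}$.

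The hard part will be the differential axiom: one must pin down the sign and magnitude of the $d_3$-shift across a crossing resolution to be \emph{exactly} $\lambda^{-1}$, not merely $\lambda^k$ for some $k$ depending on the type of chord or idempotent involved. This demands an explicit local model in $F\times[0,1]$ and a comparison of framings before and after resolution, analogous to the ``bypass attachment'' computation in~\cite{gh} but adapted to the bordered setting. A secondary obstacle, to be handled in parallel, is verifying that $\gr(a)$ depends only on the algebraic data of $a$ and not on the Morse function, metric, or interpolation used to realize the vector field; this should follow from a connectedness argument for the space of such auxiliary choices, combined with the $\Z$-quotient absorbing the remaining indeterminacy.
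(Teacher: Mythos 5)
Your construction of $G(F)$, the $\Z$-action, and the grading function is essentially the one the paper uses: objects are Morse-theoretic vector fields $v_{\s}$ on $F$ indexed by idempotents, morphisms are rel-boundary homotopy classes of nonvanishing vector fields on $F\times[0,1]$, composition is concatenation, $\lambda$ is the Hopf twist in a ball, and $\gr(I(\s)a(\brho))$ is an explicit interpolating vector field supported near a thickening of the chord diagram (realized in the paper by a prescribed sequence of saddle--saddle bifurcations). Your reading of the differential axiom is also correct: the paper verifies it by a local model in which resolving a crossing effects a $0$-surgery on the Pontryagin $1$-manifold and shifts the framing by exactly $-1$.

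The genuine gap is your claim that condition (1) is ``essentially tautological.'' It is not, and in the paper it is the bulk of the proof. The point is that $\gr(a\cdot b)=\gr(I(\s)a(\brho\uplus\bm\sigma))$ is \emph{defined} by running the construction on the joined chord set $\brho\uplus\bm\sigma$, with its own ordering of chords and its own sequence of bifurcations, whereas $\gr(a)\cdot\gr(b)$ is the stacked cylinder; these are two a priori different vector fields with the same boundary conditions, and one must prove they are homotopic rel boundary. The paper does this by identifying the Pontryagin submanifold of each side with a framed braid in $\bar{\mathfrak{N}}\times[0,1]$ realizing the corresponding strand diagram, and then checking, case by case for nested, interleaved, and abutting pairs of chords, that the component-wise isotopies can be chosen with disjoint supports. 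Crucially, the hypothesis $a\cdot b\neq 0$ is used in an essential way: in the excluded interleaved configuration ($\rho_i^-<\sigma^-<\rho_i^+<\sigma^+$) and in the configuration where a strand crosses a concatenated pair twice, the two braids are genuinely non-isotopic and the multiplicativity identity would fail. An argument that does not see where the nonvanishing of the product enters cannot be complete. Relatedly, your sketch omits the case of a chord with $M(\rho^-)=M(\rho^+)$, which forces the creation of a canceling pair of singular points before the bifurcation can be run, and omits the fact that the order in which the per-chord cylinders are concatenated matters (reordering an interleaved pair changes the isotopy class of the resulting braid). These are exactly the places where a ``stacking is composition'' shortcut breaks down.
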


 \begin{rmk}
 It turns out that $G(F)$ is by construction a set of co-oriented plane fields on $F\times[0,1]$ modulo homotopy.\footnote{By choosing a Riemannian metric on a 3-manifold, we can identify the set of nonvanishing vector fields with the set of co-oriented plane fields, modulo homotopy, by taking the orthogonal complement.}  The multiplication rule is by the obvious stacking of plane fields when the boundary condition matches. Sometimes these plane fields can be realized as tight contact structures on $F\times[0,1]$ with convex boundary, and $G(F)$ can be mapped into the (universal) contact category $\mathcal{C}(F)$ due to Honda~\cite{ho}.  But we shall not explore this issue any further in this paper.
 \end{rmk}

 \begin{thm}\label{thm:mod}
  For any compact 3-manifold $Y$ with boundary $F$, there exist a set $S(Y)$, admitting a right action by $G(F)$ and a left action by $G(-F)$, and a grading $\gr$ on $\widehat{\mathit{CFA}}(Y)$ and $\widehat{\mathit{CFD}}(Y)$ with values in $S(Y)$ such that
\begin{enumerate}
 \item[(a)] If $x$ is a generator of $\widehat{\mathit{CFA}}(Y)$ and $a_1,\dots,a_l$ are generators of $\A(F)$ such that\\ $m_{l+1}(x;a_1,\dots,a_l)\neq 0$, then $$\gr(m_{l+1}(x;a_1,\dots,a_l))=\lambda^{l-1}\gr(x)\cdot\gr(a_1)\dots\gr(a_l).$$
 \item[(b)] If $x$ is a generator of $\widehat{\mathit{CFD}}(Y)$, then $\gr(\partial x)=\lambda^{-1}\gr(x)$.
\end{enumerate}
\end{thm}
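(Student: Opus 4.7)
The plan is to extend the closed-manifold construction of \cite{gh} to the bordered setting, with a fixed boundary plane field on $F$ serving as the anchor that relates the grading set $S(Y)$ to the groupoids $G(F)$ and $G(-F)$ built in Theorem \ref{thm:alg}.

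\textbf{Step 1 (definition of $S(Y)$ and its actions).} Fix a collar $F \times [0,1] \subset Y$ of $\partial Y = F$, and a model co-oriented plane field $\xi_F$ on this collar compatible with the prescribed handle decomposition of $F$. Declare $S(Y)$ to be the set of homotopy classes, relative to this boundary datum, of nonvanishing vector fields on $Y$ whose orthogonal plane field restricts to $\xi_F$ along the collar. The right $G(F)$-action is by stacking: given $[\xi] \in S(Y)$ and $[\eta] \in G(F)$ represented by a plane field on $F \times [0,1]$ with matching boundary, concatenate $\eta$ onto the outside of the collar and reabsorb via a diffeomorphism $Y \cup (F \times [0,1]) \cong Y$. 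The left $G(-F)$-action is defined symmetrically using the opposite orientation; the two actions commute by a standard model computation on $F \times [-1,1]$.

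\textbf{Step 2 (grading the generators).} Fix a provincially admissible bordered Heegaard diagram $\mathcal{H} = (\Sigma, \ba, \bb, z)$ for $Y$, arranged so that the $\ba$- and $\bb$-arcs near $\partial \Sigma$ realize the fixed handle decomposition of $F$. To each generator $x$ associate a nonvanishing vector field on $Y$ by the procedure of \cite{gh}: start from a gradient-like vector field for a Morse function realizing $\mathcal{H}$, and perform the local desingularization at each intersection point of $x$. Arrange the construction near $\partial Y$ so that the resulting plane field coincides with $\xi_F$ on the collar, which is possible precisely because the boundary of the bordered diagram encodes the handle decomposition of $F$. Define $\gr(x) \in S(Y)$ to be the homotopy class of the resulting vector field.

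\textbf{Step 3 (verifying (a) and (b)).} For (b), a term $y$ appearing in $\partial x$ corresponds to a pseudoholomorphic curve with provincial domain $B$ of Maslov index one. The vector fields for $x$ and $y$ differ only over a neighborhood of the support of $B$, and the change of homotopy class in $S(Y)$ reduces verbatim to the closed computation of \cite{gh}, giving exactly one factor of $\lambda^{-1}$. For (a), a term $y$ in $m_{l+1}(x; a_1, \dots, a_l)$ corresponds to a holomorphic curve whose east-infinity boundary records the ordered chord sequence $a_1, \dots, a_l$. The interior of the domain contributes the factor $\lambda^{l-1}$ through the same index/modification argument (matching the $\mathcal{A}_\infty$ grading shift), while the east-infinity boundary realizes geometrically, via the collar, the product $\gr(a_1) \cdots \gr(a_l)$ of plane field germs built in the proof of Theorem \ref{thm:alg}.

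\textbf{Main obstacle.} The principal difficulty lies in (a). The provincial case reduces cleanly to the closed-manifold argument, but in the presence of boundary one must simultaneously keep track of (i) the interior Maslov-index contribution giving $\lambda^{l-1}$ and (ii) the sequence of plane field modifications along the collar produced by the east-infinity boundary of the holomorphic curve. Showing that the latter reproduces \emph{exactly} the groupoid product $\gr(a_1) \cdots \gr(a_l)$ of Theorem \ref{thm:alg} — rather than merely something homotopic up to an untracked element of $G(F)$ — requires a careful local model near east infinity, in which the Reeb chord data are translated into the stacking operation on $F \times [0,1]$. Once this matching is established, independence of $\gr$ under changes of the bordered Heegaard diagram (isotopies, handleslides, stabilizations) follows from the usual invariance arguments, each move inducing a canonical self-bijection of $S(Y)$.
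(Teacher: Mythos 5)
There are two genuine gaps. First, your Step 1 fixes a \emph{single} boundary plane field $\xi_F$ and defines $S(Y)$ relative to it. This is structurally wrong: the vector field the construction attaches to a generator $\xx$ restricts on $\partial Y$ to $v_{o(\xx)}$, which depends on the set of occupied $\alpha$-arcs, so the grading set must be the disjoint union $\coprod_{|\s|=k}\Vect(Y,v_{\s})$ over idempotents, as in \S\ref{grading:set}. With one fixed boundary condition the groupoid actions do not typecheck — an element of $G(\s,\t)$ can only be stacked onto a class in $\Vect(Y,v_{\s})$ and lands in $\Vect(Y,v_{\t})$ — and the composite $\gr(x)\cdot\gr(a_1)\cdots\gr(a_l)$ in (a) is undefined unless the intermediate boundary conditions match the idempotents $\s_1,\dots,\s_l$ picked out by $\xx\otimes a(\brho_1)\otimes\cdots\otimes a(\brho_l)\neq 0$.

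Second, and more seriously, Step 3 does not prove (a); it restates it. Your proposed decomposition — ``the interior contributes $\lambda^{l-1}$, the east-infinity boundary realizes the product'' — is exactly the claim that needs proof, and you flag the matching at east infinity as the main obstacle without resolving it. The paper's mechanism for closing this gap is to cap $\Sigma$ off to a closed diagram $(\Sigma',\bm\alpha',\bm\beta',z)$ for a closed manifold $Y'\supset Y$ whose new $\beta$-circles contain translates of $Z\setminus N(z)$, to push the Reeb chords of $\vec{\brho}$ onto these circles so that $(\xx,\vec{\brho})$ and $\yy$ extend to closed generators $\xx',\yy'$ with an extended domain $B'$, and then to combine the closed-manifold formula $\widetilde{\gr}(\xx')=\lambda^{\mathrm{ind}(B')}\widetilde{\gr}(\yy')$ of~\cite{gh} with an explicit Pontryagin--Thom computation (Seifert surfaces $S_1,S_2$, framing links $K_1,K_2$) showing $n_{(\xx,\vec\rho),\yy}-n_{\xx',\yy'}=-q$, plus the index bookkeeping $\mathrm{ind}(B,\vec{\brho})-l=\mathrm{ind}(B')-q$; the general case (non-disjoint chords, repeated idempotents) then requires a further surgery/isotopy argument tracking how abutting, nested and interleaved pairs change $\iota$ and the framing. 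None of this comparison is present in your argument, so the exact factor $\lambda^{l-1}$ is not established. The same issue affects (b): the differential on $\widehat{\mathit{CFD}}$ is not given by provincial domains only — it produces terms $a(-\vec\rho)\otimes\yy$ with nonempty chord sequences — so reducing ``verbatim to the closed computation'' does not cover the statement actually needed, namely $\gr(a(-\vec\rho)I_D(\yy))\cdot\gr(\yy)=\lambda^{-\mathrm{ind}(B,\vec\rho)}\gr(\xx)$.
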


\begin{thm}\label{thm:pair}
Let $Y_1$ and $Y_2$ be compact 3-manifolds such that $\partial Y_1=-\partial Y_2$. Then there exist a set $S(Y_1)\otimes S(Y_2)$ and a map $\Psi:S(Y_1)\otimes S(Y_2)\to \Vect(Y)$ such that
$$\widetilde{\text{\em\gr}}(\Phi(a\otimes b))=\Psi(\text{\em\gr}(a)\otimes\text{\em\gr}(b))$$
for any generators $a$ in $\widehat{\mathit{CFA}}(Y_1)$ and $b$ in $\widehat{\mathit{CFD}}(Y_2)$. Here $\widetilde{\text{\em\gr}}$ denotes the absolute grading in Heegaard Floer homology from~\cite{gh}.
\end{thm}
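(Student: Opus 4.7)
The plan is first to construct $S(Y_1)\otimes S(Y_2)$ as a balanced tensor product of $G(F)$-sets: the quotient of $S(Y_1)\times S(Y_2)$ by the equivalence $(s_1\cdot g)\otimes s_2\sim s_1\otimes(g\cdot s_2)$ whenever the groupoid composition makes sense, with the $\Z$-action $\lambda$ retained to track the degree shifts from Theorem~\ref{thm:mod}. This choice is essentially forced by the grading rules (a) and (b) of Theorem~\ref{thm:mod} together with the fact that $\widehat{\mathit{CFA}}(Y_1)\,\widetilde\otimes\,\widehat{\mathit{CFD}}(Y_2)$ is itself a balanced tensor product of modules over $\A(F)$.

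Next I would define $\Psi$ geometrically. By the remark following Theorem~\ref{thm:alg}, $G(F)$ is realized by co-oriented plane fields on $F\times[0,1]$, and the construction of $S(Y_i)$ in the proof of Theorem~\ref{thm:mod} identifies it with homotopy classes of nonvanishing vector fields on $Y_i$ with a prescribed behavior along a collar of $\partial Y_i\cong F$. Given $s_1,s_2$ with matching boundary traces, gluing their representatives along $F$ produces a nonvanishing vector field on $Y=Y_1\cup_F Y_2$, hence a class in $\Vect(Y)$. To see that this descends to the balanced tensor product, observe that stacking $g\in G(F)$ on the $Y_1$-side of the interface is isotopic, through a family supported in a bicollar of $F$, to stacking it on the $Y_2$-side.

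I would then verify $\widetilde{\gr}(\Phi(a\otimes b))=\Psi(\gr(a)\otimes\gr(b))$ by tracking a generator through the pairing equivalence~\eqref{eq:equiv}. Recall from~\cite{lot} that $\Phi$ is built from matched bordered Heegaard diagrams $\H_1,\H_2$ whose union $\H_1\cup_F\H_2$ is a Heegaard diagram for $Y$, and $\Phi(a\otimes b)$ is the corresponding generator in the closed diagram. The absolute grading $\widetilde{\gr}$ defined in~\cite{gh} assigns to such a generator a nonvanishing vector field built from the gradient of a compatible Morse function. In the proofs of Theorems~\ref{thm:alg} and~\ref{thm:mod}, $\gr(a)$ and $\gr(b)$ are defined by the same Morse-theoretic recipe applied to the bordered diagrams $\H_i$. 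Since the Morse function on $Y$ is the concatenation of the ones on $Y_1$ and $Y_2$, the vector field representing $\widetilde{\gr}(\Phi(a\otimes b))$ is by construction the gluing $\Psi(\gr(a)\otimes\gr(b))$.

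The hard part is pinning down the collar behavior and the $\lambda$-shifts so that the identity holds on the nose. On $F\times[0,1]$ one must check that the Morse-theoretic plane field produced by the bordered Heegaard diagram agrees, up to homotopy rel boundary, with the plane field implicit in the very definition of $\A(F)$ and $G(F)$; this requires a careful local analysis near the $\ba$-arcs, the basepoint, and the prescribed handle decomposition of $F$. Moreover, the $\lambda^{l-1}$ shifts accumulated from rule (a) of Theorem~\ref{thm:mod} must cancel correctly against the Maslov-type contributions appearing in $\widetilde{\gr}$ for the Whitney disks counted in the glued diagram; bookkeeping this cancellation is the technical core of the argument.
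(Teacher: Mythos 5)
Your proposal matches the paper's argument: the grading set is the same balanced tensor product $S(\H_1)\otimes_{G(\ZZ_1)}S(\H_2)$ of the fiber product of pairs agreeing along $F$, the map $\Psi$ is gluing of vector fields along $F$, and the identity $\widetilde{\gr}(\Phi(\xx_1\otimes\xx_2))=\Psi(\gr(\xx_1)\otimes\gr(\xx_2))$ holds because $\gr(\xx_1)$, $\gr(\xx_2)$ and $\widetilde{\gr}$ are all produced by the identical local modification of the gradient of a compatible Morse function, so the representative vector fields literally glue. The one correction: the $\lambda$-shift/Maslov cancellation you single out as ``the technical core'' is not part of this theorem at all --- the statement concerns only generators, and compatibility with the differentials and $\A_\infty$-operations is already secured by Theorems~\ref{thm:alg} and~\ref{thm:mod} together with~\cite{gh} --- which is why the paper's verification of the final identity is genuinely immediate.
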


\begin{rmk}
 Using essentially the same constructions that we will work out in this paper, Theorem~\ref{thm:alg} can be generalized to any surface $F$, not necessarily with connected boundary, using the generalized strands algebra defined by Zarev~\cite{zar}. Both Theorems~\ref{thm:mod} and~\ref{thm:pair} can be generalized to the bimodules $\widehat{\mathit{CFDD}}$, $\widehat{\mathit{CFDA}}$, $\widehat{\mathit{CFAA}}$ constructed in~\cite{lot}, as well as the setting of bordered sutured Floer homology~\cite{zar}, in which case $F\subset \partial Y$, where the inclusion can be strict. The main difference in the construction in the latter case is that one needs to fix a nonvanishing vector field in $\partial Y\setminus F$, similarly to how Spin$^c$ structures are assigned to generators in~\cite{zar}.
\end{rmk}

The paper is organized as follows: In \S\ref{sec:alg}, we first review the definition of the strand algebra $\A(F)$ associated to a parameterized closed surface $F$ following~\cite{lot}. Then we construct the groupoid $G(F)$ in which the grading on $\A(F)$ takes values, and give the proof for Theorem~\ref{thm:alg}. We finish this section by comparing our geometric grading on $\A(F)$ with the previously constructed grading in~\cite{lot}. In \S\ref{sec:mod}, we construct the ``left-$G(-F)$ and right-$G(F)$ bimodule'' $S(Y)$ in which the grading on $\widehat{\mathit{CFA}}(Y)$ and $\widehat{\mathit{CFD}}(Y)$ takes values. Some variations of the standard Pontryagin-Thom construction are made in this section which enable us to compute the relative gradings needed for the proof of Theorem~\ref{thm:mod}. The proof of Theorem~\ref{thm:pair} is provided in \S\ref{sec:pair}.

\section{The grading on the algebra}\label{sec:alg}

In this section, we construct the grading on the algebra $\A(\ZZ)$. This grading takes values in a certain groupoid $G(\ZZ)$. Before defining $G(\ZZ)$ and the grading, we will quickly review the construction of $\A(\ZZ)$. For a more thorough exposition, see~\cite{lot}.

\subsection{The construction of the algebra $\A(\ZZ)$}

The {\em strand algebra} $\A(\ZZ)$ is defined as a subalgebra of $\A(4k)$. As a $\Z/2$-vector space, $\A(4k)$ is generated by partial permutations $(S,T,\phi)$, where $S$ and $T$ are subsets of $\{1,\dots,4k\}$ containing the same number of elements and $\phi:S\to T$ is a bijection such that $\phi(i)\ge i$ for every $i\in S$. We can represent $(S,T,\phi)$ by a diagram with $4k$ points on the left and on the right and with strands connecting the set $S$ on the left with the set $T$ on the right. This diagram is required to have the smallest possible number of crossings. Each crossing corresponds to an inversion, i.e. a pair of points ${i,j}\in\{1,\dots,4k\}$ with $i<j$ and $\phi(i)>\phi(j)$. It follows from this definition that the strands either go up or stay horizontal if we read from left to right. The product of $(S,T,\phi)$ with $(S',T',\phi')$ is defined to be $(S,T',\phi'\circ\phi)$ provided that $T=S'$ and that the number of inversions of $\phi'\circ\phi$ equals the sum of the number of inversions of $\phi$ and $\phi'$. Otherwise, the product is set to be 0. For each subset $S$, one can define an idempotent element $I(S)=(S,S,\mathbb{I}_S)$. One can also define a differential on $\A(4k)$ as follows. For a generator $a$ of $\A(4k)$, let $\partial a$ be the sum over all ways to smooth one crossing of $a$, where we require all the terms of this sum to have exactly one less intersection than $a$. In other words, if smoothing one crossing decreases the number of inversions by more than 1, we set that term to zero.

We denote by $[2k]$ the set $\{1,\dots,2k\}$. A {\em pointed matched circle} $\ZZ$ is a quadruple $(Z,\a,M,z)$ consisting of an oriented circle $Z$, a set of $4k$ points $\a$ in $Z$, a two-to-one function $M:\a\to [2k]$ and a basepoint $z\in Z\setminus\a$. We also require that 0-surgery on $Z$ along the pairs of points that are matched by $M$ yields a single circle. A pointed matched circle gives rise to a surface $F(\ZZ)$ of genus $k$, which we often denote by $F$. The surface $F$ is obtained by starting with a disk whose oriented boundary is $Z$, attaching 1-handles along all the pairs matched by $M$ and attaching a 2-handle to the boundary circle. We observe that we can find a self-indexing Morse function $f:F\to[0,2]$ such that $Z=f^{-1}(3/2)$ and $\a$ is the intersection between $Z$ and the unstable manifolds of the index one critical points. We can identify $[2k]$ with the set of index one critical points $\{p_1,\dots,p_{2k}\}$. We also denote $Z\setminus\{z\}$ by $Z\setminus z$, for simplicity.

By a {\em Reeb chord} $\rho$ we mean an oriented arc on $Z\setminus z$, with the same orientation as $Z$, whose boundary lies in $\a$. We denote by $\rho^-$ the initial endpoint of $\rho$ and by $\rho^+$ its final endpoint. We write $\rho=[\rho^-,\rho^+]$. A set $\bm{\rho}=\{\rho_1,\dots,\rho_m\}$ of Reeb chords is said to be {\em consistent} if both sets $\brho^-:=\{\rho_1^-,\dots,\rho_m^-\}$ and $\brho^+:=\{\rho_1^+,\dots,\rho_m^+\}$ have exactly $m$ elements. A consistent set of Reeb chords $\brho$ gives rise to an element $a_0(\brho)$ in $\A(4k)$ given by
\[a_0(\brho)=\sum_{\substack{S\subset\{1,\dots,4k\}\\S\cap(\brho^-\cup\brho^+)=\emptyset}} (S\cup\brho^-,S\cup\brho^+,\phi_S)\]
where $\phi_S\vert_S=\mathbb{I}$ and $\phi_S(\rho_i^-)=\rho_i^+$ for every $i$. Now, for every $\s\subset[2k]$, we can define the following idempotent
$$I(\s):=\sum_{\substack{S\subset\{1,\dots,4k\}\\M\text{ maps }S\text{ bijectively to }\s}}I(S).$$ We let $\mathcal{I}(\ZZ)$ be the ring of idempotents, which is defined to be the algebra generated by the elements $I(\s)$ for $\s\subset[2k]$. The unit of this algebra is $$\mathbf{I}:=\sum_{\s\subset[2k]} I(\s).$$
We now define the algebra $\A(\ZZ)$ to be the subalgebra of $\A(4k)$ generated by $\mathcal{I}(\ZZ)$ and by the elements $a(\brho):=\mathbf{I}a_0(\brho)\mathbf{I}$, for every consistent set of Reeb chords $\brho$. The algebra $\A(\ZZ)$ is generated as a $\Z/2$-vector space by elements of the form $I(\s)a(\brho)$.
We note that if $I(\s)a(\brho)\neq 0$, then $M|_{\brho^-}$ and $M|_{\brho^+}$ are injective,  $M(\brho^-)\subset\s$ and $(\s\setminus M(\brho^-))\cap M(\brho^+)=\emptyset$. We also observe that the choice of a basepoint $z$ and an orientation on $Z$ induce an ordering on $\mathbf{a}$: if we start from $z$ and follow the positive orientation on $Z$, then $a_i <a_j$ if and only if we meet $a_i$ before $a_j$, where $a_i,a_j \in \mathbf{a}$.

Recall the three different ways that two Reeb chords can intersect. A pair of Reeb chords $\{\rho_1,\rho_2\}$ is said to be {\em interleaved} if $\rho_i^-<\rho_j^-<\rho_i^+<\rho_j^+$ for $\{i,j\}=\{1,2\}$, and {\em nested} if $\rho_i^-<\rho_j^-<\rho_j^+<\rho_i^+$ for $\{i,j\}=\{1,2\}$. The Reeb chords $\rho_1$ and $\rho_2$ are said to {\em abut} if $\rho_1^+=\rho_2^-$. In this case, one defines their {\em join} to be $\rho_1\uplus\rho_2:=[\rho_1^-,\rho_2^+]$. Note that the order of the Reeb chords is important; we will say that $(\rho_1,\rho_2)$ is an abutting pair when $\rho_1^+=\rho_2^-$.

For two sets of Reeb chords $\brho$ and $\bm\sigma$, their {\em join} $\brho\uplus\bm\sigma$ is obtained from the union $\brho\cup\bm\sigma$ where every abutting pair $(\rho,\sigma)$ with $\rho\in\brho$ and $\sigma\in\bm\sigma$ is substituted by $\rho\uplus\sigma$. We recall that if $a(\brho)a(\bm\sigma)\neq 0$, then 
\begin{equation}\label{eq:uplus}
a(\brho\uplus\bm\sigma)=a(\brho)a(\bm\sigma)
\end{equation}

\subsection{The groupoid $G(\ZZ)$}\label{sub:group}
Let $F=F(\ZZ)$. We consider the bundle $TF\oplus\underline{\R}\to F$, where $\underline{\R}$ is the trivial real line bundle. We interpret this bundle as the pullback of the tangent bundle of a three-manifold in which $F$ is embedded, so we call sections of this bundle vector fields on $F$. We will now construct a vector field $v_0':F\to TF\oplus \underline{\R}$. Let $f$ be a self-indexing Morse function compatible with $\ZZ$ as above. Consider its gradient vector field $\nabla f$ and modify it to first eliminate the index zero and index two critical points as follows. Let $\gamma$ be the flow line passing through the basepoint $z$, which connects the index zero critical point to the index two critical point. Let $N(\gamma)$ denote a neighborhood of $\gamma$. Figure~\ref{03}(a) illustrates $\nabla f$ restricted to $N(\gamma)$. We now define a nonvanishing vector field on $N(\gamma)$, which coincides with $\nabla f$ on $\partial N(\gamma)$, as shown in Figure~\ref{03}(b). This picture determines the desired vector field up to homotopy relative to the boundary. This is similar to the construction in \cite[\S 2]{gh}. Let $v_0'$ denote the vector field given by this this construction in $N(\gamma)$ and by $\nabla f$ in the complement of $N(\gamma)$.

\begin{figure}[h]
    \begin{overpic}[scale=.3]{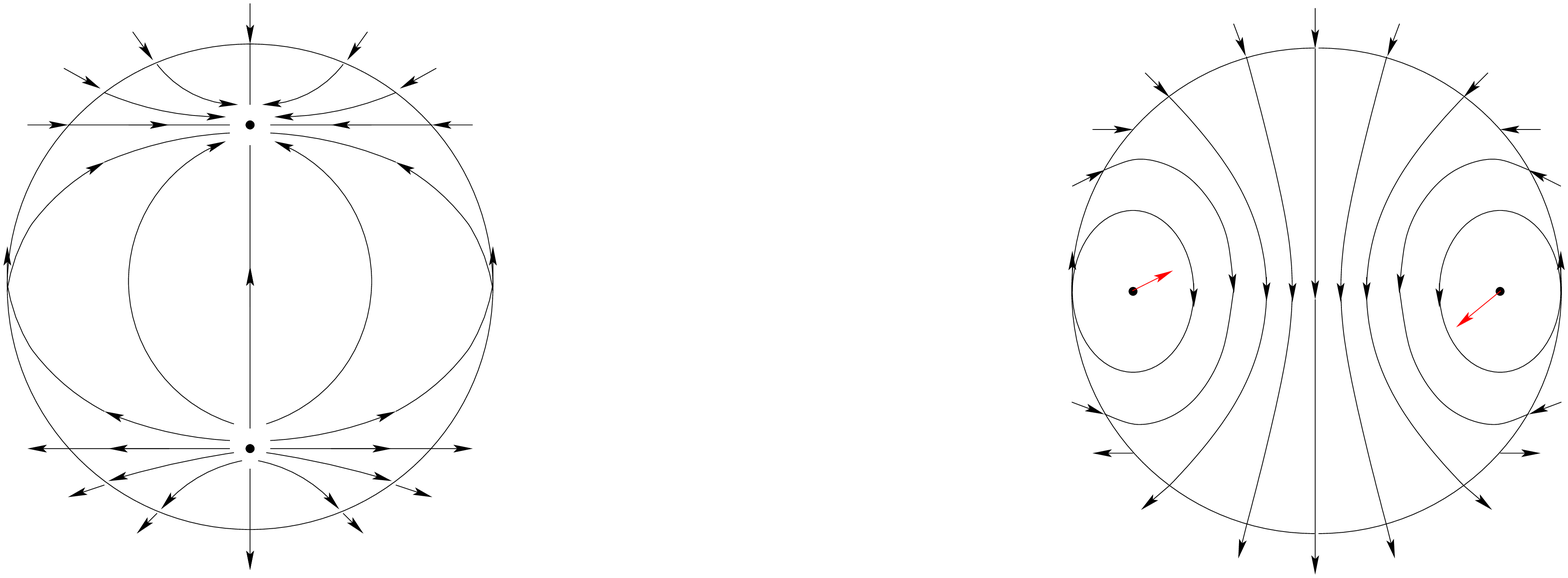}
        \put(14,-4){(a)}
        \put(82,-4){(b)}
    \end{overpic}
    \vspace{4mm}
    \caption{(a) The gradient vector field $\nabla f$ in a neighborhood of the flow line passing through $z$. (b) The nonvanishing vector field in the same neighborhood after modification. The red arrow on the left is pointing into the page and the arrow on the right is pointing out.}
    \label{03}
\end{figure}

Note that each subset $\s\subset[2k]$ corresponds to a set of index one critical points of $f$, under the identification $[2k]=\{p_1,\dots,p_{2k}\}$. We denote by $\bar{\s}$ the subset $[2k]\setminus\s$. For $\s\subset[2k]$, let $\phi_{\s}$ be a bump function which equals 1 at each point of $\s$ and 0 outside of small neighborhoods of each point of $\s$. We denote by $|\s|$ the cardinality of the set $\s$.

\begin{definition}\label{def:v}
 For each $\s\in[2k]$, we define $v_{\s}:F\to TF\oplus \underline{\R}$ to be the vector field given by $$v_{\s}=v_0'+\phi_{\s}\frac{\partial}{\partial t}-\phi_{\bar{\s}}\frac{\partial}{\partial t}.$$ Here $t$ denotes the $\R$-coordinate.
\end{definition}

We can now define the grading set $G(\ZZ)$.

\begin{definition}
For $\s,\t\in[2k]$, such that $|\s|=|\t|$, we define $G(\s,\t)$ to be set of the homotopy classes of nonvanishing vector fields on $F\times[0,1]$ that restrict to $v_{\s}$ on $F\times\{0\}$ and to $v_{\t}$ in $F\times\{1\}$.
We define $G(\ZZ)$ to be the disjoint union of $G(\s,\t)$ for all $\s,\t\subset[2k]$ such that $|\s|=|\t|$.
\end{definition}

Given vector fields $v,w$ on $F\times[0,1]$ such that $v|_{F\times\{1\}}=w|_{F\times\{0\}}$, we can take their concatenation $v\cdot w$, which we see as a vector field on $F\times[0,1]$. So given $[v]\in G(\s,\t)$ and $[w]\in G(\t,\mathbf{u})$, we define their composition by $[v]\cdot [w]:=[v\cdot w]\in G(\s,\mathbf{u})$. We now recall the definition of a groupoid.

\begin{definition}
A groupoid is a category in which every morphism is invertible.
\end{definition}

We observe that $G(\ZZ)$ is a groupoid, whose underlying objects are the vector fields $v_{\s}$ for $\s\subset [2k]$.
The groupoid $G(\ZZ)$ admits a $\Z$-action, defined as follows. We will denote the action of an integer $n\in\Z$ by $\lambda^n$. First observe that, since $\pi_3(S^2)\simeq \Z$, there is a $\Z$-action on the set of homotopy classes of nonvanishing vector fields on a ball $B^3$ relative to its boundary. Our sign convention is such that the Hopf map $S^3\to S^2$ acts on $B^3$ by $\lambda^{-1}$. Note that our sign convention is the opposite of the usual one, but agrees with the one in~\cite{gh}. Let $[v]\in G(\ZZ)$ and fix a ball $B$ in the interior of $F\times[0,1]$. For $n\in\Z$, we define $\lambda^n\cdot [v]$ to be the relative homotopy class of the vector field obtained by the acting on $v|_B$ by $\lambda^n$ and keeping $v$ unchanged outside $B$. We observe that
$$\lambda^n\cdot ([v]\cdot [w])=(\lambda^n\cdot [v])\cdot [w]=[v]\cdot(\lambda^n \cdot [w]).$$

We now recall the Pontryagin-Thom construction in this context. If we fix a trivialization of $T(F\times[0,1])$, we can see every nonvanishing vector field in $F\times[0,1]$ as a map $F\times[0,1]\to S^2$. We observe that two nonvanishing vector fields on $F\times[0,1]$ are homotopic relative to the boundary if, and only if, the corresponding maps $F\times[0,1]\to S^2$ are homotopic relative to the boundary. Now take two maps $v,w:F\times[0,1]\to S^2$ that coincide on $\partial (F\times[0,1])$. We choose a regular value $p$ of both maps and we consider the links $L_v:=v^{-1}(p)$ and $L_w:=w^{-1}(p)$. These links have a framing induced by $v$ and $w$, respectively. Note that $L_v\cap (F\times\{0,1\})$ is a framed 0-manifold in $F\times\{0,1\}$. So $(L_v\cap (F\times\{0,1\}))\times[0,1]$ is a framed one-manifold in $F\times\{0,1\}\times[0,1]$. The links $L_v$ and $L_w$ are said to be relatively framed cobordant if there exists a framed surface $S\subset F\times[0,1]\times[0,1]$, such that
\begin{itemize}
\item[(i)] $\partial S \cap (F\times [0,1]\times\{1\})=L_w\times\{1\}$ as framed submanifolds of $F\times[0,1]\times\{1\}$.
\item[(ii)]  $\partial S \cap (F\times [0,1]\times\{0\})=L_v\times\{0\}$ as framed submanifolds of $F\times[0,1]\times\{0\}$.
 \item[(ii)] $\partial S\cap (F\times\{0,1\}\times[0,1])=(L_v\cap (F\times\{0,1\}))\times[0,1]$ as framed submanifolds of $F\times\{0,1\}\times[0,1]$.
\end{itemize}
The relative version of the Pontryagin-Thom construction says, in this case, that the maps $v$ and $w$ are homotopic relative to the boundary if, and only, $L_v$ and $L_w$ are relatively framed cobordant.

\subsection{A $G(\ZZ)$-grading on $\A(\ZZ)$}\label{construction:G}

Recall that the strand algebra $\A(\ZZ)$ is generated as a $\Z/2$ vector field by all the elements of the form $I(\s)a(\brho)$, where $\s\subset[2k]$ and $\brho=\{\rho_1,\dots,\rho_m\}$ is a consistent set of Reeb chords. For every element $I(\s)a(\brho)\neq 0$, we will define its grading $\gr(I(\s)a(\brho))\in G(\s,\t)$, where $\t=M(\brho^+)\cup(\s\setminus M(\brho^-))$.

For a general $\s\subset[2k]$, in order to draw a picture of $v_{\s}:F\to TF\oplus \underline{\R}$ away from the index 0 and 2 critical points, we will project it to a vector field on $TF$ and decorate the zeros of this vector field using the following convention: an index one critical point $p$ is decorated with $``+"$ if $v_{\s} = \frac{\partial}{\partial t}$ at $p$, and with $``-"$ if $v_{\s} = -\frac{\partial}{\partial t}$ at $p$.

We will define the grading function $\gr$ by steps as follows.

\textsc{Step 1}. Assume that $\brho$ consists of a single Reeb orbit $\rho$, such that $M(\rho^-)\neq M(\rho^+)$. We now construct $\gr(I(\s)a(\rho))$.

We will define a vector field $v_{(\s,\rho)}$ on $F\times[0,1]$ such that $[v_{(\s,\rho)}]\in G(\s,\t)$. Recall that we are identifying a point in $[2k]$ with its corresponding index one critical point. Let $p_i=M(\rho^-)$ and $p_j=M(\rho^+)$. So $\t=\{p_j\}\cup(\s\setminus\{p_i\})$. It follows from our construction in \S\ref{sub:group} that $v_{\s}$ and $v_{\t}$ only differ in small neighborhoods of $p_i$ and $p_j$.

Let $\hat\rho$ be the arc from $p_i$ to $p_j$ consisting of three pieces: the gradient trajectory from $p_i$ to $\rho^-$, the Reeb chord $\rho$ and the gradient trajectory from $p_j$ to $\rho^+$, as shown in Figure~\ref{chord}.

\begin{figure}[h]
    \begin{overpic}[scale=.3]{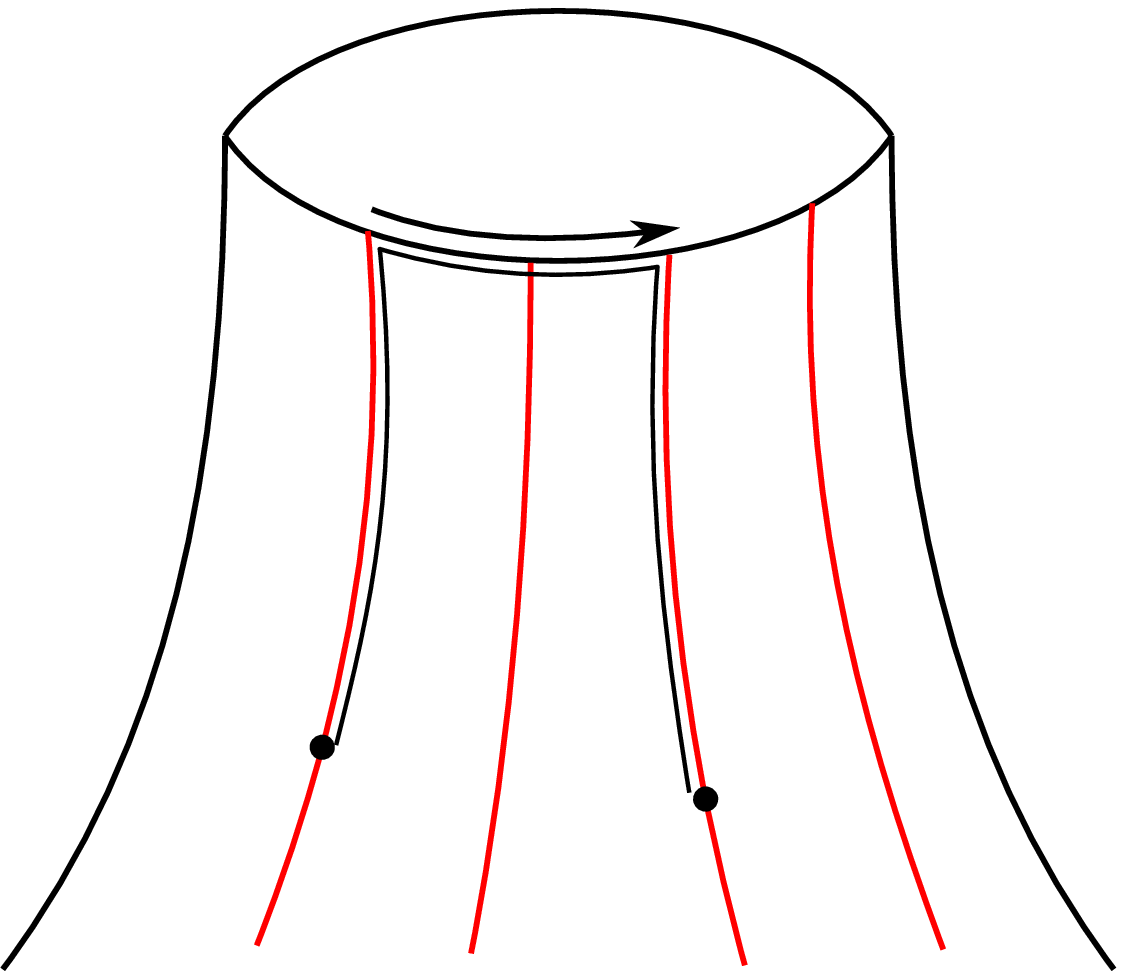}
    \put(18.5,20){\small{$p_i$}}
    \put(65,19){\small{$p_j$}}
    \put(45,70){\small{$\rho$}}
    \end{overpic}
    \caption{Reeb chord $\rho$.}
    \label{chord}
\end{figure}

Let $N(\hat\rho)\subset F$ be a tubular neighborhood of $\hat\rho$. The vector field $v_{\s}$ restricted to $N(\hat\rho)$ is depicted in Figure~\ref{gradingbottom}.

\begin{figure}[h]
    \begin{overpic}[scale=.3]{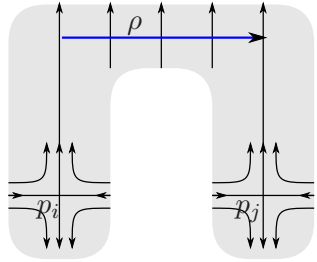}
    \put(9.5,15){\small{$p_i$}}
    \put(74.3,15){\small{$p_j$}}
    \put(39,75){\small{$\rho$}}
    \end{overpic}
    \caption{The neighborhood $N(\hat\rho)$ of $\hat\rho$.}
    \label{gradingbottom}
\end{figure}

Define $v_{(\s,\rho)}$ on $F\times\{0\}$ and $F\times\{1\}$ by setting it equal to $v_{\s}$ and $v_{\t}$, respectively. Since $v_{\s}=v_{\t}$ on the complement of $N(\hat\rho)$, we can extend $v_{(\s,\rho)}$ on $(F \setminus N(\hat\rho))\times[0,1]$, by requiring it to be invariant in the $[0,1]$-direction. The embedding $N(\hat\rho)\subset\R^2$, as shown in Figure~\ref{gradingbottom}, gives rise to a trivialization of $TF|_{N(\hat\rho)}$ and, therefore, we obtain a trivialization of $T(F\times[0,1])|_{N(\hat\rho)\times[0,1]}$. We observe that, under the identification given by this trivialization, $v_{\s}|_{N(\hat\rho)}^{-1}(0,0,1)=p_i$ and $v_{\t}|_{N(\hat\rho)}^{-1}(0,0,1)=p_j$. The points $p_i$ and $p_j$ are framed codimension two submanifolds of $N(\hat\rho)$. By the relative version of the Pontryagin-Thom construction, in order to define a nonvanishing vector field on $N(\hat\rho)\times[0,1]$ with the given boundary condition, it is enough to choose a framed 1-manifold, whose intersection with the boundary is $\{p_i\}\times\{0\}\cup \{p_j\}\times\{1\}$ with the given framing. We choose a framed 1-manifold as follows. Let $\gamma:[0,1]\to F$ be a smoothing of $\hat\rho$ such that $\gamma(0)=p_i$ and $\gamma(1)=p_j$. Let $\tilde\gamma:[0,1]\to F\times[0,1]$ be the arc defined by $\tilde\gamma(t)=(\gamma(t),t)$. Since $F\times\{t\}$ is always transverse to $\tilde\gamma$, the embedding $N(\hat\rho)\subset\R^2$ gives a canonical framing on $\tilde\gamma$. Now, using this framed 1-manifold, the Pontryagin-Thom construction allows us to extend $v_{(\s,\rho)}$ to the interior of $N(\hat\rho)\times[0,1]$. We note that $v_{(\s,\rho)}|_{N(\hat\rho)\times[0,1]}$ is well-defined up to homotopy relative to the boundary.
We now define $\gr(I(\s)a(\rho))$ to be the homotopy class of $v_{(\s,\rho)}$, which is an element of $G(\s,\t)$.

It will be useful later to have a more concrete description of $\gr(I(\s)a(\brho))$. To do so, we view a vector field on $F\times[0,1]$ as a smooth one-parameter family of nonvanishing sections $F\to TF\times \underline{\R}$, indexed by $t\in[0,1]$. We will, in fact, define a family of such sections $\{v_{(\s,\rho)}^{\,t}\}_{t\in[0,1]}$. This family can be explicitly defined by a composition of three bifurcations and necessary isotopies, which we now describe.

Consider the following model situation: Let $\Xi^0$ be a singular vector field on the unit disk $D\subset\R^2$ with two saddle points $p,q$ as depicted in Figure~\ref{bifur}(a). Then there exists a 1-parameter family of vector fields $\Xi^t$, for $0 \leq t \leq 1$, such that
\begin{itemize}
    \item each $\Xi^t$ has only two saddle points which are $p$ and $q$, and $\Xi^t$ is $t$-invariant near $\partial D$,
    \item for exactly one $t$, say $t=1/2$, the vector field $\Xi^t$ has a saddle-saddle connection from $q$ to $p$.
\end{itemize}
See Figure~\ref{bifur} for a pictorial illustration of $\Xi^t$. We call the one-parameter family $\{\Xi^t\}_{t\in[0,1]}$, a {\em bifurcation}. Notice that in the situation of Figure~\ref{bifur}, we decided to fix the unstable trajectories of $p$ and the stable trajectories of $q$ throughout the homotopy, however, we could instead fix the stable trajectories of $p$ and the unstable trajectories of $q$ throughout the homotopy to define another similar one-parameter family of vector fields with the same boundary condition, which we also call a bifurcation.

\begin{figure}[h]
    \begin{overpic}[scale=.25]{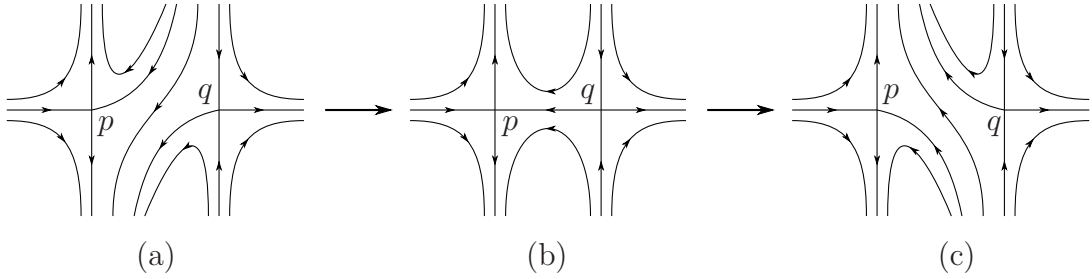}
    \put(8.5,8){$p$}
    \put(45.8,8){$p$}
    \put(81,11){$p$}
    \put(17.7,11){$q$}
    \put(53,11){$q$}
    \put(90.5,8){$q$}
    \put(12,-4.5){(a)}
    \put(48,-4.5){(b)}
    \put(86,-4.5){(c)}
    \end{overpic}
    \vspace{5mm}
    \caption{A bifurcation.}
    \label{bifur}
\end{figure}

We can now define $v_{(\s,\rho)}^{\,t}$ to be equal to $v_{\s}$ for $t\in[0,1]$ in the complement of $N(\hat\rho)$. In $N(\hat\rho)$, we define $v_{(\s,\rho)}^{\,t}$ via a composition of bifurcations and isotopies, as shown in Figure ~\ref{chordgr}. More precisely, there are two saddle points, of different signs, within $N(\hat{\rho})$. We move them in $N(\hat\rho)$ so as to exchange the ``plus'' and the ``minus''. Note that the positive saddle point goes over the negative saddle point as depicted in the third and fourth states of Figure~\ref{chordgr}.

\begin{figure}[h]
    \begin{overpic}[scale=.3]{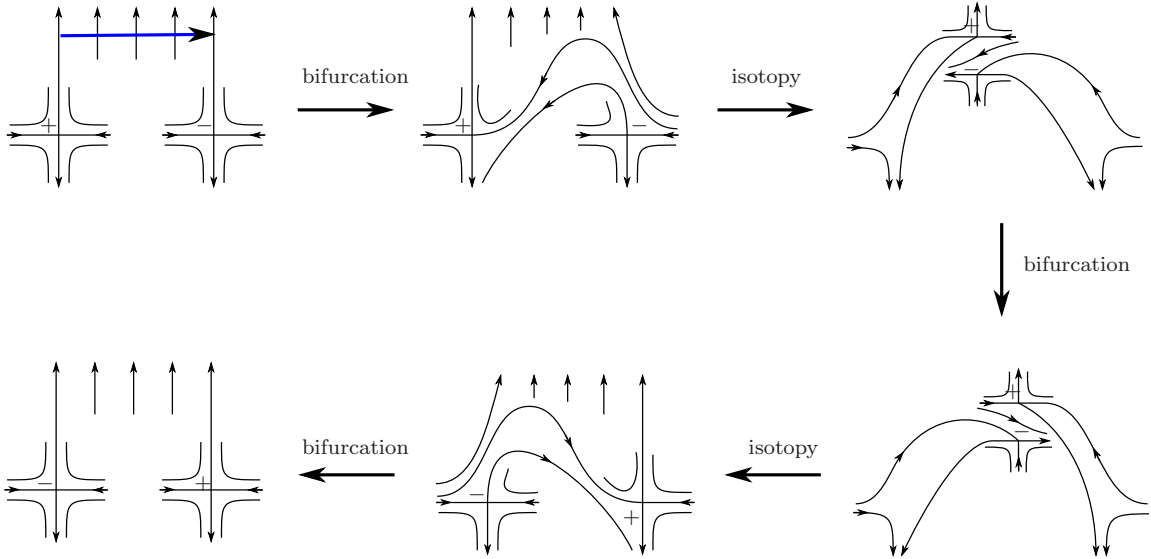}
    \put(26,41.5){\tiny{bifurcation}}
    \put(63.5,41.5){\tiny{isotopy}}
    \put(89,25){\tiny{bifurcation}}
    \put(65,9){\tiny{isotopy}}
    \put(26,9){\tiny{bifurcation}}
    \put(3.1,37.3){\tiny{$+$}}
    \put(16.7,37.3){\tiny{$-$}}
    \put(39.3,37.3){\tiny{$+$}}
    \put(54.7,37.3){\tiny{$-$}}
    \put(83.7,46){\tiny{$+$}}
    \put(83.7,42.2){\tiny{$-$}}
    \put(87.3,14){\tiny{$+$}}
    \put(88.1,10.5){\tiny{$-$}}
    \put(40.5,5){\tiny{$-$}}
    \put(54,3){\tiny{$+$}}
    \put(2.8,6){\tiny{$-$}}
    \put(16.6,6){\tiny{$+$}}
    \end{overpic}
    \caption{A sequence of three bifurcations which defines the grading of $\rho$.}
    \label{chordgr}
\end{figure}

The family $\{v_{(\s,\rho)}^{\,t}\}$ gives rise to a vector field on $F\times[0,1]$, which we denote by $\tilde{v}_{(\s,\rho)}$. As before, the embedding $N(\hat\rho)\subset \R^2$, as in Figure~\ref{gradingbottom}, induces a trivialization of $T(N(\hat\rho)\times[0,1])$. We observe that, in $N(\hat\rho)\times[0,1]$, the framed arc $(\tilde{v}_{(\s,\rho)})^{-1}(0,0,1)$ is isotopic, and hence cobordant, to the arc $\tilde\gamma$. Moreover, their framings coincide under the isotopy. Therefore, by the Pontryagin-Thom construction, $[\tilde{v}_{(\s,\rho)}]=\gr(I(\s)a(\rho))$. Since we had defined $v_{(\s,\rho)}$ up to relative homotopy, we can just take $v_{(\s,\rho)}=\tilde{v}_{(\s,\rho)}$.

\textsc{Step 2.} Now assume that $\brho$ still consists of only one Reeb orbit $\rho$, but $M(\rho^-)= M(\rho^+)$.

Let $p=M(\rho^-)=M(\rho^+)$ and let $\hat\rho'$ be the union of $\rho$ and the flow lines connecting $p$ to $\rho^-\cup\rho^+$. We construct a one-parameter family $\{\Theta^t\}_{t\in[0,1]}$ of vector fields on $F$ as follows. Set $\Theta^0=v_{\s}$ and $\Theta^t\equiv\Theta^0$, for $t\in[0,1]$, outside $N(\hat\rho')$. Fix a small $\epsilon>0$. For $t\in[0,\epsilon]$, define $\Theta^t$ in $N(\hat\rho')$ to be the homotopy which creates an extra pair of singular points near $p$ decorated with negative signs, along the unstable trajectories of $p$ as depicted in Figure~\ref{cancelpair}. More precisely, under the projection to $TF$, we create a pair of canceling critical points $\mu$ of index one and $\nu$ of index two, lying on the flow line of $\nabla f$ connecting $p$ to $\rho^+$. Consider the (broken) arc $\hat\rho$ from $p$ to $\mu$, which is the union of the trajectory from $p$ to $\rho^-$, the Reeb chord $\rho$ and the trajectory from $\mu$ to $\rho^+$. Now we can repeat the method from Step 1 for $\Theta^{\epsilon}|_{N(\hat\rho)}$ and obtain a homotopy $\Theta^{t}|_{N(\hat\rho)}$ for $t\in[\epsilon,1-\epsilon]$, which exchanges the signs of the index one critical points. We define $\Theta^{t}$ in $N(\hat\rho')\setminus N(\hat\rho)$ to be equal to $\Theta^{\epsilon}$, for all $t\in[\epsilon,1-\epsilon][$. Now, for $t\in[1-\epsilon,1]$, let $\Theta^t|_{N(\hat\rho')}$ be the homotopy which cancels the extra pair of ``negative'' singular points. The family $\{\Theta^t\}_{t\in[0,1]}$ gives rise to a vector field, which is again denoted by $v_{(\s,\rho)}$. Finally $\textrm{gr}(I(\s)a(\rho))$ is defined to be the homotopy class of $v_{(\s,\rho)}$.

\begin{figure}[h]
    \begin{overpic}[scale=.35]{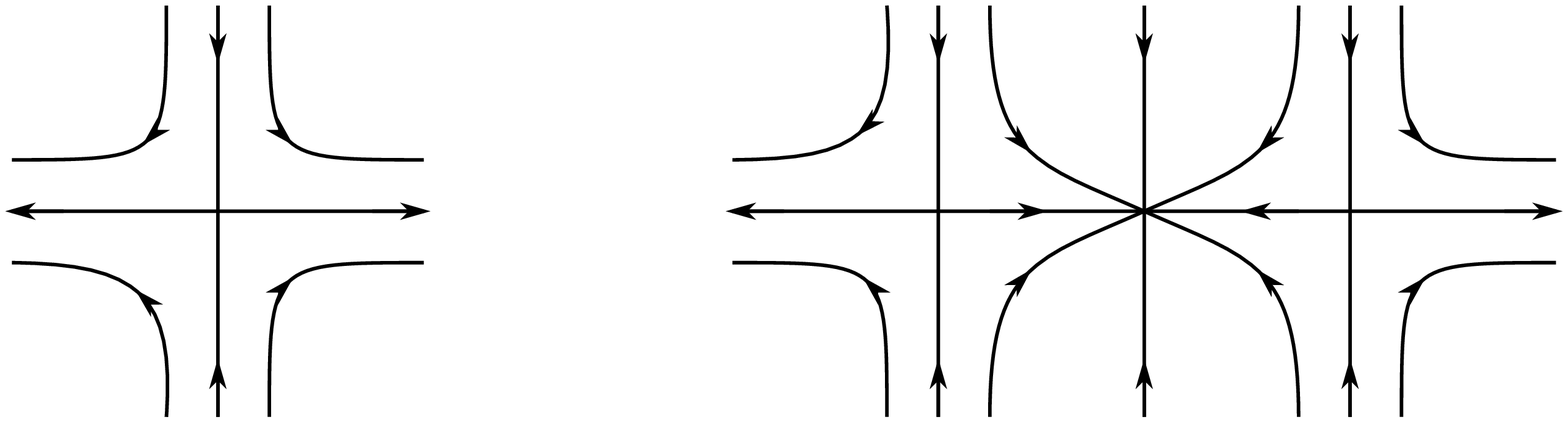}
    \put(14,14.3){\small{$+$}}
    \put(60.5,14.5){\small{$+$}}
    \put(73.7,15.5){\small{$-$}}
    \put(87,14.3){\small{$-$}}
    \put(10,10.5){$p$}
    \put(57,10.5){$p$}
    \put(70,9){$\nu$}
    \put(83.5,10.5){$\mu$}
    \end{overpic}
    \caption{Creating a canceling pair of critical points with negative sign.}
    \label{cancelpair}
\end{figure}

\textsc{Step 3.} The general case.

Suppose $\bm{\rho}=\{\rho_1,\cdots,\rho_l\}$. We define an ordering on $\bm{\rho}$ by setting $\rho_i < \rho_j$ whenever $\rho^{+}_i > \rho^{+}_j$ in $\mathbf{a}$. Up to re-ordering, we may assume that $\rho_1<\rho_2< \cdots <\rho_l$.
We want to define a relative homotopy class $\textrm{gr}(I(\s)a(\bm{\rho})) \in G(v_{\s},v_{\t})$, where $\mathbf{t}=M(\brho^+)\cup(\s\setminus M(\brho^-))$. First, for every point in $(M(\brho^-)\cap M(\brho^+))\setminus M(\brho^-\cap\brho^+)$, we create a pair of canceling ``negative'' singular points, as follows. If $p=M(\rho_i^+)=M(\rho_j^-)$, then we create a pair of ``negative'' singular points on the flow line connecting $p$ to $\rho_i^+$, as in Step 2. This construction gives rise to a vector field $v_{\epsilon}$ in $F\times[0,1]$ similar to $\{\Theta|_t\}_{t\in[0,\epsilon]}$ from Step 2. We also consider the vector field $v_{-\epsilon}$, which corresponds to canceling the ``negative'' singular points added to $v_{\t}$. Now consider the arcs $\hat\rho_i$ associated to $\rho_i$ as before, namely, $\hat\rho_i$ is the union of $\rho_i$ with the gradient trajectories connecting index one critical points to $\rho^{-}_i \cup \rho^{+}_i$. Note that $\hat\rho_i$ always connects a ``positive'' saddle to a ``negative'' saddle. Now we define $v_{(\s,\bm\rho)}^1$ to be the vector field which equals $v_{(\s.\rho_1)}$ on $N(\hat\rho_1)\times[0,1]$ and which is $[0,1]$-invariant elsewhere.
We repeat the same procedure for $\rho_2,\dots,\rho_l$, such that for every $i\ge 2$, the vector field $v_{(\s,\brho)}^i$ corresponding to $\rho_i$ is $[0,1]$-invariant in the complement of $N(\hat\rho_i)\times[0,1]$ and, in $N(\hat\rho_i)\times[0,1]$, it is given by the description as in Step 3. In particular,
$$v_{(\s,\brho)}^{i-1}|_{F\times\{1\}}=v_{(\s,\brho)}^i|_{F\times\{0\}}.$$
Let $v_{(\s,\bm\rho)}$  be the concatenation
\begin{equation}\label{eq:v}
v_{(\s,\brho)}:=v_{\epsilon}\cdot v_{(\s,\brho)}^1 \dots v_{(\s,\brho)}^l\cdot v_{-\epsilon}.
\end{equation}
Finally, we define $\text{gr}(I(\s)a(\brho))$ to be the relative homotopy class of $v_{(\s,\brho)}$, which is an element of $G(\s,\t)$.

\subsection{The properties of the grading on $\A(\ZZ)$}
We now show that the grading we constructed in the previous subsection satisfies the desired properties.

\begin{prop}\label{prop:propgrad}
The grading function $\text{\em gr}: \mathcal{A}(\ZZ) \to G(\ZZ)$ constructed above defines a grading on the differential graded algebra $\mathcal{A}(\ZZ)$, i.e., it satisfies the following:
\begin{itemize}
    \item For any two sets of Reeb chords $\bm\rho,\bm\sigma$, if $I(\s)a(\bm\rho)I(\t) a(\bm\sigma) \neq 0$, then $$\text{{\em gr}}(I(\s)a(\bm\rho)) \cdot\text{{\em gr}}(I(\t)a(\bm\sigma)) = \text{{\em gr}}(I(\s)a(\bm\rho) I(\t)a(\bm\sigma)),$$
    \item For any $\brho$, if $\partial (I(\s)a(\bm\rho))\neq 0$, then $$\text{{\em gr}}(\partial( I(\s)a(\bm\rho)))=\lambda^{-1} \cdot \text{{\em gr}}(I(\s)a(\bm\rho)).$$
\end{itemize}
\end{prop}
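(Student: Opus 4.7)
Both statements reduce to relative framed cobordism computations via the Pontryagin--Thom construction of \S\ref{sub:group}. Recall that $\gr(I(\s)a(\brho))\in G(\s,\t)$ is encoded by the framed $1$-manifold $L(\s,\brho):=v_{(\s,\brho)}^{-1}(0,0,1)\subset F\times[0,1]$, which by Steps $1$--$3$ is a disjoint union of framed arcs isotopic to the graphs $\tilde\gamma_i(t)=(\gamma_i(t),t)$ of smoothings of the paths $\hat\rho_i$, together with short framed arcs coming from the canceling-pair contributions $v_{\pm\epsilon}$. The two claims therefore become assertions that certain framed $1$-manifolds in $F\times[0,1]\times[0,1]$ are relatively framed cobordant.

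For the product formula, the hypothesis $I(\s)a(\brho)I(\t)a(\bm\sigma)\neq 0$ combined with~(\ref{eq:uplus}) gives $I(\s)a(\brho)I(\t)a(\bm\sigma)=I(\s)a(\brho\uplus\bm\sigma)$, and forces $\t=M(\brho^+)\cup(\s\setminus M(\brho^-))$, so that the concatenation $v_{(\s,\brho)}\cdot v_{(\t,\bm\sigma)}$ is well defined on $F\times[0,2]$ and its framed preimage is $L(\s,\brho)$ stacked below $L(\t,\bm\sigma)$. I then argue pair by pair on $\brho\times\bm\sigma$. Non-abutting pairs contribute arcs lying in disjoint neighborhoods that stack intact and match the corresponding arcs for $\brho\uplus\bm\sigma$. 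For an abutting pair $(\rho_i,\sigma_j)$ with $M(\rho_i^+)=M(\sigma_j^-)=p$, the canceling pair at $p$ introduced by $v_{-\epsilon}$ at the top of the bottom slab and the canceling pair introduced by $v_\epsilon$ at the bottom of the top slab are opposite in orientation, and a small relative framed cobordism removes both while joining the top endpoint of $\tilde\gamma_{\rho_i}$ to the bottom endpoint of $\tilde\gamma_{\sigma_j}$ into a single arc isotopic to $\tilde\gamma_{\rho_i\uplus\sigma_j}$. Assembling these local cobordisms produces the required equality in $G(\s,\mathbf{u})$.

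For the differential formula, I use the fact that $\partial(I(\s)a(\brho))$ is a sum over those crossings in the strand diagram whose resolution decreases the inversion count by exactly one; each such crossing corresponds to a pair $\{\rho_1,\rho_2\}\subset\brho$ in either an interleaved or a nested configuration, and the resolution modifies these two chords locally on $Z$. In the $F\times[0,1]$ picture the graphs $\tilde\gamma_{\rho_1},\tilde\gamma_{\rho_2}$ cross transversally inside a $3$-ball $B$, and the smoothed arcs agree with them outside $B$ but run through $B$ uncrossed. The two framed $1$-manifolds therefore differ by a single crossing change of framed strands inside $B$; under Pontryagin--Thom this corresponds to post-composition with a map $S^3\to S^2$ of Hopf invariant $-1$, which by the sign convention of \S\ref{sub:group} is precisely the $\lambda^{-1}$ action. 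The principal difficulty in both steps is the framing bookkeeping: in the product step one must check that the stacked embeddings $N(\hat\rho_i),N(\hat\sigma_j)\subset\R^2$ induce on the concatenated arc the same framing as the embedding $N(\hat{\rho_i\uplus\sigma_j})\subset\R^2$, and in the differential step one must pin down the sign of the local Hopf invariant so as to obtain $\lambda^{-1}$ rather than $\lambda^{+1}$, consistent with the convention that crossing resolutions increasing the inversion count contribute zero to~$\partial$.
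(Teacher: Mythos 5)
Your treatment of the product formula is essentially the paper's argument: both encode $\gr(I(\s)a(\brho))$ by the framed Pontryagin $1$-manifold of $v_{(\s,\brho)}$ (the paper's $Q_{(\s,\brho)}$), stack the two slabs, and match components, with the abutting pairs handled by joining arcs. The paper is more careful about one point you elide — that the component-wise isotopies can be chosen with disjoint supports, which requires a case analysis of how $\rho_i$ and $\sigma_j$ can intersect and uses the hypothesis $a(\brho)a(\bm\sigma)\neq 0$ to rule out the bad interleaved configuration — but this is a completable detail, not a change of strategy.

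The differential step, however, contains a genuine error. First, the crossings of the strand diagram do not come from ``interleaved or nested'' pairs: an interleaved pair contributes no inversion (the paper notes explicitly that every inversion comes from a nested pair), and you omit the crossings between a Reeb chord and a horizontal strand, i.e.\ a point of $\s\setminus M(\brho^-)$ lying in the interior of some $\rho_i$; the paper must (and does) treat that case separately. More seriously, the passage from $Q_{(\s,\brho)}$ to $Q_{(\s,\tilde{\brho})}$ is \emph{not} a crossing change: resolving an inversion reconnects the strands at the crossing, so the two framed tangles in the ball $B$ attach the same boundary points to each other in different ways, whereas a crossing change preserves the connectivity and merely switches over- and under-strands. (Also, the arcs $\tilde\gamma_i$ are disjoint in $F\times[0,1]$; only their projections cross.) If one literally performed a crossing change between two framed strands, the effect on the relative class in $\pi_3(S^2)$ would be computed through a change of linking number and would contribute $\lambda^{\pm 2}$, not $\lambda^{-1}$. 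The correct mechanism, which the paper uses, is that the resolution is a $0$-surgery, i.e.\ the two tangles cobound a saddle in $F\times[0,1]\times[0,1]$; pushing the trivial framing of $Q_{(\s,\brho)}$ through this saddle cobordism produces a framing on $Q_{(\s,\tilde{\brho})}$ that differs from its own trivial framing by exactly $1$, and it is this framing discrepancy (with the paper's sign convention) that yields $\lambda^{-1}$. As written, your argument does not establish the coefficient $-1$, and the ``single crossing change of Hopf invariant $-1$'' it relies on is not what happens geometrically.
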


\begin{proof}
We shall use the Pontryagin-Thom construction to prove both assertions of the proposition. The proof will be divided in three steps.

\textsc{Step 1}: For a pair $(\s,\brho)$, we define a submanifold $Q_{(\s,\brho)}\subset F\times[0,1]$ and we relate it to $\gr(I(\s)a(\brho))$.

We denote by $N(z)$ and a small neighborhood of $z$ and let $\mathfrak{N}$ be a small neighborhood of $Z\setminus N(z)$. For each index one critical point $p_i\in[2k]$, we denote by $H_i\subset F$ the corresponding 1-handle. 
We fix an orientation-preserving embedding $\mathfrak{N}\hookrightarrow\R^2$ that restricts to an orientation-preserving embedding $Z\setminus N(z)\hookrightarrow \R\times\{0\}$.
Let $\bar{\mathfrak{N}}:=\mathfrak{N}\cup H_i$. We can construct an immersion $\bar{\mathfrak{N}}\looparrowright \R^2$ whose restriction to $\mathfrak{N}$ is the previous embedding and such that the core of $H_i$ maps to a half-circle, see Figure~\ref{pontmfd}(a).
This immersion induces a trivialization of $TF|_{\bar{\mathfrak{N}}}$, which gives rise to a $[0,1]$-invariant trivialization of $T(F\times[0,1])$ over $\bar{\mathfrak{N}}\times[0,1]$. It can be extended to a $[0,1]$-invariant trivialization of $T(F\times[0,1])$, which we denote by $\tau$.

\begin{figure}[ht]
 \begin{overpic}[scale=.9]{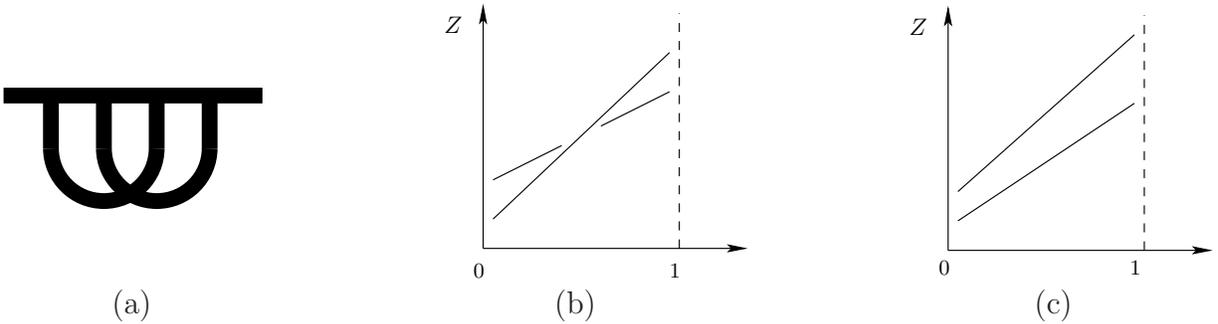}
  \put(10,-2){(a)}
   \put(46,-2){(b)}
   \put(85,-2){(c)}
    \end{overpic}
\vspace{3mm}
    \caption{(a) The image of $\bar{\mathfrak{N}}$ in $\R^2$. (b) A nested pair. (c) An interleaved pair.}
    \label{pontmfd}
\end{figure}

Fix a generator $I(\s)a(\bm\rho) \in \mathcal{A}(\ZZ)$. We now define a one-manifold $Q_{(\s,\brho)}$ in $\bar{\mathfrak{N}}\times [0,1]$.  We write $\brho=\{\rho_1,\dots,\rho_l\}$ and we let $\gamma_i\subset \bar{\mathfrak{N}}$ be the curve obtained by smoothing the union\footnote{If $M(\rho_i^+)\neq M(\rho_j^-)$ for every $j$ or $\rho_i^+=\rho_j^-$ for some $j$, then $\gamma_i$ is just $\hat\rho_i$. Otherwise, to obtain $\gamma_i$, we need to add a small segment connecting $M(\rho_i^+)$ to the ``negative'' index one critical point corresponding to $\rho_i^+$.} of $\rho_i$ with the gradient flow trajectories connecting $M(\rho_i^-)\cup M(\rho_i^+)$ to $\rho_i^-\cup\rho_i^+$. For every nested pair $\{\rho_i,\rho_j\}$, we isotope $\gamma_i$ and $\gamma_j$ slightly in $\mathfrak{N}$ so that they do not intersect in $\mathfrak{N}$.
The orientation of $Z$ induces an orientation of $\gamma_i$. Now we parametrize $\gamma_i$, obtaining an injection $\gamma_i:[0,1]\to \mathfrak{N}$. We define a partial order $<$ on $\mathfrak{N}$ seen as a subset of $R^2$, by saying that $(x_1,y_1)<(x_2,y_2)$ if $x_1<x_2$. For every interleaved pair $\{\rho_i,\rho_j\}$ with $\rho_i^-<\rho_j^-$, we can assume that $\gamma_i(t)<\gamma_j(t)$, whenever both $\gamma_i(t)$ and $\gamma_j(t)$ belong to $\mathfrak{N}$. 

Now we define arcs $\tilde{\gamma}_i$ on $F\times [0,1]$ by $\tilde{\gamma}_i(t)=(\gamma_i(t),t)$. It follows from our construction that the arcs $\tilde{\gamma}_i$ are all pairwise disjoint. We define $Q_{(\s,\brho)}$ to be the union of $\tilde{\gamma}_i$ for all $i$ and the constant arcs $p\times[0,1]$, where $p\in\s\setminus M(\brho^-)$. Up to a small isotopy of $Q_{(\s,\brho)}$, we can assume that the projection of $Q_{(\s,\brho)}\cap(\mathfrak{N}\times[0,1])$ to $Z\times[0,1]$ has minimal number of intersections. In fact, there will be one intersection for each nested pair. Notice that, for a nested pair $\{\rho_i,\rho_j\}$ with $\rho_i^+<\rho_j^+$, the image of $\tilde{\gamma}_i$ goes under the image of $\tilde{\gamma}_j$, where the height is the coordinate in $\mathfrak{N}\subset\R^2$ corresponding to the second factor of $\R$. We keep track of which strand goes over which on the projection of $Q_{(\s,\brho)}\cap(\mathfrak{N}\times[0,1])$, as in Figure~\ref{pontmfd}(b). This projection can be seen as the strand diagram corresponding to $\brho$. See Figure~\ref{pontmfd}(b),(c) for examples where $\brho$ is a pair of Reeb chords. We obtain a diagram for $I(\s)a(\brho)$ if we include two dotted horizontal lines corresponding to the two points in $M^{-1}(p)$, for every $p\in \s\setminus M(\brho^-)$.

If we want to represent a concatenation $Q_{(\s,\brho)}\cdot Q_{(\t,\bm\sigma)}$, we will substitute the pair of dotted lines in either of the two diagrams by a solid line whenever one of the two corresponding points gets moved by the other set of Reeb chords. For an example, see Figure~\ref{dotted}.

\begin{figure}[ht]
 \begin{overpic}[scale=1]{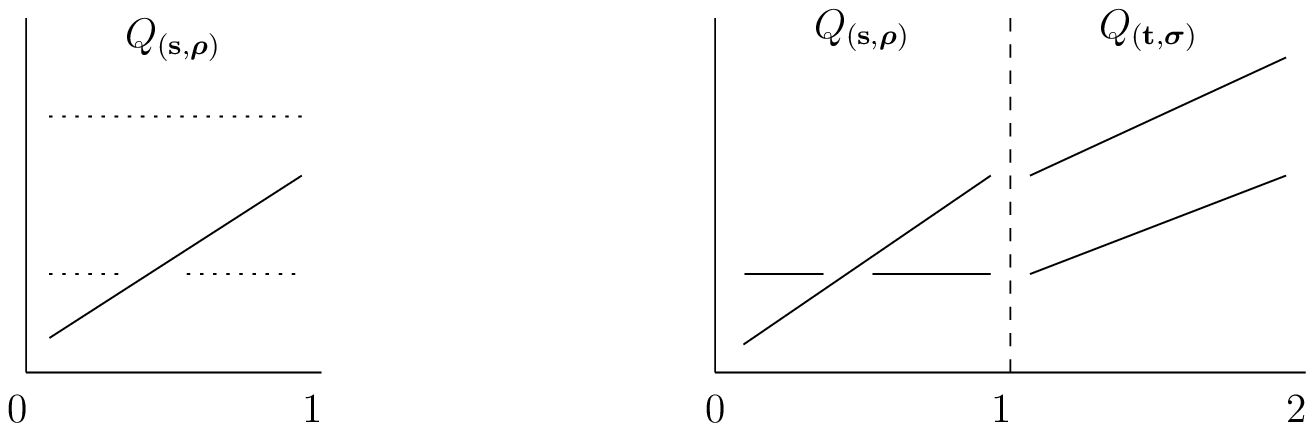}
    \end{overpic}
    \caption{A diagram for $Q_{(\s,\brho)}$ and for a concatenation $Q_{(\s,\brho)}\cdot Q_{(\t,\bm\sigma)}$.}
    \label{dotted}
\end{figure}

Let $v_{(\s,\brho)}$ be the vector field constructed in \S\ref{construction:G} whose homotopy class is $\gr(I(\s)a(\brho))$. Recall that $v_{(\s,\brho)}(x,t)=v_{\s}(x)$ for every $(x,t)\not\in\bar{\mathfrak{N}}\times[0,1]$. As usual, we let $\t=(\s\setminus M(\brho^-))\cup M(\brho^+)$. Using $\tau$, we see $v_{(\s,\brho)}$ as a map $v_{(\s,\brho)}:F\times[0,1]\to S^2$ and $v_{\s}$ and $v_{\t}$ as maps $v_{\s},v_{\t}:F\to S^2$. We can perturb $v_{(\s,\brho)}$, $v_{\s}$ and $v_{\t}$ slightly so that $(0,0,1)$ is a regular value of all these maps. Then \begin{align*}
v_{\s}^{-1}(0,0,1)&=\s\cup P,\\
v_{\t}^{-1}(0,0,1)&=\t\cup P,                                                                                                                                                                                                                                                                                                                                                                                                                                                                                                                                                                                                                                                                                                                                                                                                                                                                                                                                                                                                                                                                                                                                                        \end{align*}
where $P$ is a set of points in $F\setminus\bar{\mathfrak{N}}$. Now we want to compute $v_{(\s,\brho)}^{-1}(0,0,1)$. This is a one-manifold with boundary on $\partial (F\times[0,1])$. Namely $$\partial (v_{(\s,\brho)}^{-1}(0,0,1))=v_{\t}^{-1}(0,0,1)\cup (-v_{\s}^{-1}(0,0,1))=((\t\cup P)\times\{1\})\cup (-(\s\cup P)\times\{0\}).$$
We observe that 
$$v_{(\s,\brho)}^{-1}(0,0,1)\setminus (\bar{\mathfrak{N}}\times[0,1])=P\times[0,1].$$
So it remains to compute $(v_{(\s,\brho)}|_{\bar{\mathfrak{N}}\times[0,1]})^{-1}(0,0,1)$. We recall that $v_{(\s,\brho)}$ is defined as the concatenation of the vector fields $v_{(\s,\brho)}^1,\dots, v_{(\s,\brho)}^l$ and small perturbations at the beginning and at the end. We notice that, up to small isotopies, $(v_{(\s,\brho)}^i|_{\bar{\mathfrak{N}}\times[0,1]})^{-1}(0,0,1)$ is a braid given by the union of $\tilde{\gamma}^i$ with horizontal chords of the form $\{p\}\times[0,1]$, where $p=p_j$ or $p$ is close to $p_j$ for some $j$. The preimage of $(0,0,1)$ under the pertubation vector fields $v_{\epsilon}$ and $v_{-\epsilon}$ are braids with some horizontal chords and some nearly horizontal chords corresponding to the perturbations. After concatenating all those braids and performing an isotopy, we obtain
$$(v_{(\s,\brho)}|_{\bar{\mathfrak{N}}\times[0,1]})^{-1}(0,0,1)=Q_{(\s,\brho)}.$$
We observe that the ordering of the vector fields $v_{(\s,\brho)}^i$ in the concatenation is very important here. In particular, for every interleaved pair $\{\rho_i,\rho_j\}$ with $\rho_i^+>\rho_j^+$, the vector field $v_{(\s,\brho)}^i$ is on the left of $v_{(\s,\brho)}^j$, which ensures that the corresponding braid is isotopic to $\tilde{\gamma}_i\cup\tilde{\gamma}_j$. Figure~\ref{rho_sigma} illustrates this difference. Case (3) depicts what we obtain using the prescribed ordering and case (4) shows what we would obtain if we switched the order of $\rho_i$ and $\rho_j$. Note that the submanifolds obtained in these two cases are not isotopic.

The framing on $Q_{(\s,\brho)}$ induced by $v_{(\s,\brho)}$ is trivial and has a standard form near every $p_i$\footnote{The standard framing is a rotation by $\pi$ in $N(p_i)$ either positively or negatively depending on whether $p_i=M(\rho^-)$ or $p_i=M(\rho^+)$ for the corresponding Reeb chord $\rho$, but it does not depend on anything else.}. The framed manifold $v_{(\s,\brho)}^{-1}(0,0,1)$ is called the {\em Pontryagin} submanifold of $v_{(\s,\brho)}$.

\textsc{Step 2}: We now prove part (a) of the proposition.


Let $\brho=\{\rho_1,\dots,\rho_n\}$ and $\bm\sigma=\{\sigma_1,\dots,\sigma_m\}$, where we order the Reeb chords as in \S\ref{construction:G}. We assume that $I(\s)a(\brho) I(\t) a(\bm\sigma)\neq 0$, for $|\s|=|\t|$. We will show that
\begin{equation*}
\gr(I(\s)a(\brho))\cdot \gr(I(\t) a(\bm\sigma))=\gr(I(\s)a(\brho) I(\t) a(\bm\sigma)).
\end{equation*}
By~\eqref{eq:uplus}, $I(\s)a(\brho) I(\t) a(\bm\sigma)=I(\s)a(\brho\uplus\bm\sigma)$. So it is enough to show that
\begin{equation}
 v_{(\s,\brho)}\cdot v_{(\t,\bm\sigma)}\cong v_{(\s,\brho\uplus\bm\sigma)}.\label{eq:concat}
\end{equation}
Here $\cdot$ denotes the concatenation of vector fields in $F\times[0,1]$ and $\cong$ denotes homotopy relative to the boundary.

By the Pontryagin-Thom construction, in order to prove~\eqref{eq:concat}, it is enough to show that the Pontryagin submanifolds of both sides are framed cobordant relative to the boundary. It follows from Step 1 that the Pontryagin submanifold of $v_{(\s,\brho)}\cdot v_{(\t,\bm\sigma)}$ is obtaining by concatenating $Q_{(\s,\brho)}$ and $Q_{(\t,\bm\sigma)}$ and taking its union with $P\times[0,1]$. Since the Pontryagin submanifold of $v_{(\s,\brho\uplus\bm\sigma)}$ is $Q_{(\s,\brho\uplus\bm\sigma)}\cup P\times[0,1]$, it is enough to see that $Q_{(\s,\brho)}\cdot Q_{(\t,\bm\sigma)}$ is framed cobordant to $Q_{(\s,\brho\uplus\bm\sigma)}$ in $\bar{\mathfrak{N}}\times[0,1]$. We will first prove that $Q_{(\s,\brho)}\cdot Q_{(\t,\bm\sigma)}$ is isotopic to $Q_{(\s,\brho\uplus\bm\sigma)}$ relative to the boundary.

We write $\brho=\{\rho_1,\dots,\rho_n\}$ and $\bm\sigma=\{\sigma_1,\dots,\sigma_m\}$, where we order the Reeb chords as in \S\ref{construction:G}.
For Reeb chords $\rho_i\in\brho$ and $\sigma_j\in\bm\sigma$, let $\tilde{\gamma}_{\rho_i}$ and $\tilde{\gamma}_{\sigma_j}$ denote the corresponding components of $Q_{(\s,\brho)}$ and $Q_{(\t,\bm\sigma)}$, respectively. If $\rho_i$ and $\sigma_j$ abut for some $i,j$, then the concatenation $\tilde{\gamma}_{\rho_i}\cdot\tilde{\gamma}_{\sigma_j}$ is clearly isotopic to the chord corresponding to $\rho_i\uplus\sigma_j$ in $Q_{(\s,\brho\uplus\bm\sigma)}$. Whenever $\rho_i$ does not abut with any $\sigma_j$, we see that $\tilde{\gamma}_{\rho_i}$ concatenated with a horizontal chord in $Q_{(\t,\bm\sigma)}$ is isotopic to the chord corresponding to $\rho_i$ in $Q_{(\s,\brho\uplus\bm\sigma)}$. Similarly, for every $\sigma_j$ such that there is no $\rho_i$ with $\rho_i^+=\sigma_j^-$, the chord obtained from concatenating the appropriate horizontal chord in $Q_{(\s,\brho)}$ with $\tilde{\gamma}_{\sigma_j}$ is isotopic to the chord corresponding to $\sigma_j$ in $Q_{(\s,\brho\uplus\bm\sigma)}$. So every connected component of the concatenation $Q_{(\s,\brho)}\cdot Q_{(\t,\bm\sigma)}$ is isotopic to the corresponding component of $Q_{(\s,\brho\uplus\bm\sigma)}$. It remains to check that we can perform each of these isotopies in the complement of the others, so that we obtain an isotopy from $Q_{(\s,\brho)}\cdot Q_{(\t,\bm\sigma)}$ to $Q_{(\s,\brho\uplus\bm\sigma)}$.

First let us consider a pair of chords $\rho_i$ and $\sigma_j$ that do not abut. If they do not intersect, then the corresponding isotopies can clarly be chosen to have disjoint supports.
If $\rho_i$ and $\sigma_j$ intersect, then we have one of the following four possibilities:
\begin{enumerate}
 \item $\rho_i^-<\sigma^-<\sigma^+<\rho_i^+$,
\item $\sigma^-<\rho_i^-<\rho_i^+<\sigma^+$,
\item $\sigma^-<\rho_i^-\le\sigma^+<\rho_i^+$,
\item $\rho_i^-<\sigma^-<\rho_i^+<\sigma^+$.
\end{enumerate}
In cases (1) and (2), $\{\rho_i,\sigma\}$ is a nested and in cases (3) and (4), $\{\rho_i,\sigma\}$ is interleaved. For an example of each of these cases, see Figure~\ref{rho_sigma}. We note that (3) includes the case when $(\sigma_j,\rho_i)$ is an abutting pair.
\begin{figure}[ht]
 \begin{overpic}[scale=1.3]{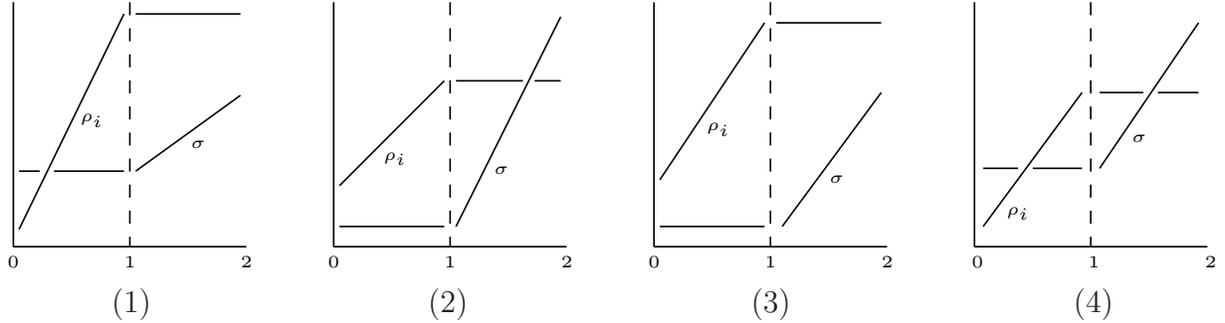}
  \put(9,-3){(1)}
   \put(35,-3){(2)}
   \put(61.5,-3){(3)}
\put(88,-3){(4)}
    \end{overpic}
\vspace{4mm}
    \caption{The four cases when $\rho_i$ and $\sigma$ intersect, but $(\rho_i,\sigma)$ is not abutting.}
    \label{rho_sigma}
\end{figure}
We observe that, in case (4), $a(\brho)a(\bm\sigma)=0$, so by our assumption, it cannot occur. In the three other cases, we see that the corresponding isotopies can be chosen to have disjoint supports.

Now, let $(\rho_i,\sigma_j)$ be an abutting pair. If the diagram for $Q_{(\s,\brho)}\cdot Q_{(\t,\bm\sigma)}$ has no crossing along $\tilde{\gamma}_{\rho_i}\cdot\tilde{\gamma}_{\sigma_j}$, then we can clearly choose an isotopy from $\tilde{\gamma}_{\rho_i}\cdot\tilde{\gamma}_{\sigma_j}$ to the chord corresponding to $\rho_i\uplus\sigma_j$ in $Q_{(\s,\brho\uplus\bm\sigma)}$, whose support is in the complement of the other isotopies. If there exists a chord in $Q_{(\s,\brho)}\cdot Q_{(\t,\bm\sigma)}$ that crosses $\tilde{\gamma}_{\rho_i}\cdot\tilde{\gamma}_{\sigma_j}$ twice, then $a(\brho)a(\bm\sigma)\neq 0$. Hence, we only need to consider the case when there exists a chord in $Q_{(\s,\brho)}\cdot Q_{(\t,\bm\sigma)}$ that crosses $\tilde{\gamma}_{\rho_i}\cdot\tilde{\gamma}_{\sigma_j}$ exactly once. But in this case, the crossing corresponds either to a crossing of $Q_{(\s,\brho)}$ or to a crossing of $Q_{(\t,\bm\sigma)}$. So we can isotope $\tilde{\gamma}_{\rho_i}\cdot\tilde{\gamma}_{\sigma_j}$ and the corresponding chord without changing any crossings. Therefore these isotopies can be chosen to have disjoint supports. We conclude that $Q_{(\s,\brho)}\cdot Q_{(\t,\bm\sigma)}$ is isotopic to $Q_{(\s,\brho\uplus\bm\sigma)}$.

We also observe that these isotopies preserve the trivial framing, since they can be chosen so that the braids are always transverse to $F\times\{t\}$. Therefore $Q_{(\s,\brho)}\cdot Q_{(\t,\bm\sigma)}$ and $Q_{(\s,\brho\uplus\bm\sigma)}$ are framed homotopic relative to the boundary. Hence
$$\gr(I(\s)a(\brho))\cdot \gr(I(\t) a(\bm\sigma))=\gr(I(\s)a(\brho) I(\t) a(\bm\sigma)).$$

\textsc{Step 3}: We prove part (b) of the proposition.

Let $I(\s)a(\brho)$ be a generator of $\mathcal{A}(F)$ and let $v_{(\s,\brho)}$ and $Q_{(\s,\brho)}$ be as in Step 1 above. Recall that the differential of $I(\s)a(\brho)$ is the sum of all ways of resolving one crossing of $I(\s)a(\bm\rho)$. Let $I(\s)a(\tilde{\brho})$ be one of the terms in $\partial(I(\s)a(\brho))$. So $Q_{(\s,\tilde{\brho})}$ is obtained from $Q_{(\s,\brho)}$ by resolving one crossing. Note that both submanifolds have the trivial framing induced by the immersion $\bar{\mathfrak{N}}\looparrowright \R^2$. Figures~\ref{surgery}(a),(b) show the submanifolds and their framings for two cases, when the crossing that is being resolved concerns two Reeb chords and when it concerns a Reeb chord and a horizontal strand. We see that, in both cases, $Q_{(\s,\tilde{\brho})}$ is the result of performing a 0-surgery to $Q_{(\s,\brho)}$, as in Figure~\ref{surgery}(c). So $Q_{(\s,\tilde{\brho})}$ are $Q_{(\s,\brho)}$ cobordant. We notice that using this cobodism, the trivial framing on $Q_{(\s,\brho)}$ induces a framing on $Q_{(\s,\tilde{\brho})}$ that differs from the trivial one by 1, see Figure~\ref{surgery}(d). Recall that, by our sign convention, a clockwise turn changes the framing by $+1$. Therefore 
$$\gr(I(\s)a(\tilde{\brho}))=\lambda^{-1}\gr (I(\s)a(\brho)).$$
\begin{figure}
 \begin{overpic}[scale=0.7]{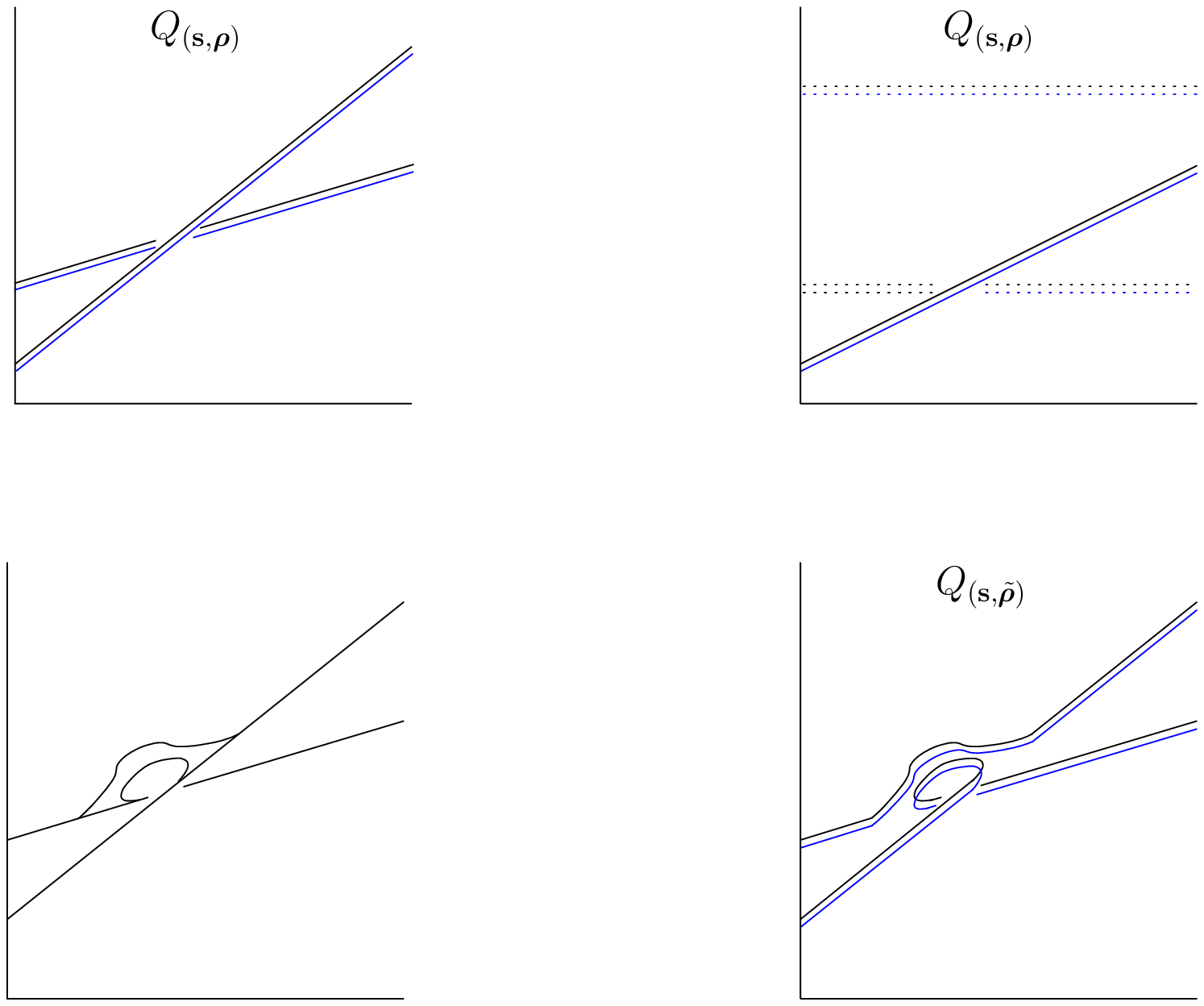}
  \put(14.5,45){(a)}
   \put(83,45){(b)}
   \put(14.5,-4){(c)}
\put(83,-4){(d)}
    \end{overpic}
\vspace{4mm}
    \caption{Resolving a crossing.}
    \label{surgery}
\end{figure}
\end{proof}

\subsection{Comparison with the grading by a noncommutative group}\label{sec:noncom}
We now compare our topological grading constructed in \S\ref{construction:G} with the gradings on $\mathcal{A}(\ZZ)$ defined in~\cite{lot}.

We first recall the definition of the noncommutative groups in which the gradings defined in~\cite{lot} takes values. The group $G'(4k)$ is a $\Z$-central extension of $H_1(Z\setminus z,\mathbf{a})$. In order to give a more concrete definition of $G'(4k)$, we need to recall a few definitions from~\cite{lot}. For a point $p\in\mathbf{a}$ and a Reeb chord $\sigma$, define 
$$m(p,\sigma)=\left\{\begin{array}{ll}1,&\text{ if }\sigma^-<p<\sigma^+,\\
1/2,&\text{ if }p=\sigma^-\text{ or }p=\sigma^+,\\
0,&\text{ otherwise}.
                             \end{array}\right.$$
We can extend $m$ bilinearly to a map $m:H_0(\mathbf{a})\times H_1(Z\setminus z,\mathbf{a})\to \frac{1}{2}\Z$. For $\alpha_1,\alpha_2\in H_1(Z\setminus z,\mathbf{a})$, one now defines $L(\alpha_1,\alpha_2)=m(\partial \alpha_1,\alpha_2)$, where $\partial:H_1(Z\setminus z,\mathbf{a})\to H_0(\mathbf{a})$ is the boundary map. Also, for $\alpha\in H_1(Z\setminus z,\mathbf{a})$, let $\epsilon(\alpha)$ be $1/4$ times the number of points $p$ in $\mathbf{a}$ such that the multiplicity of $\alpha$ on both sides of $p$ has different parity. Since the number of such points is always even, $\epsilon(\alpha)\in\frac{1}{2}\Z/\Z$. One can now define $$G'(4k)=\{(j,\alpha)\in \frac{1}{2}\Z\times H_1(Z\setminus z,\mathbf{a})\big\vert \epsilon(\alpha)\equiv j \,(\text{mod }1)\}.$$ The multiplication is defined by
$$(j_1,\alpha_1)\cdot (j_2,\alpha_2)=(j_1+j_2+L(\alpha_1,\alpha_2),\alpha_1+\alpha_2).$$
It follows from~\cite[Prop. 3.37]{lot} that this operation defines a multiplication in $G'(4k)$. For an element $g=(j,\alpha)\in G'(4k)$, the number $j\in\frac{1}{2}\Z$ is called the {\em Maslov component} of $g$, and $\alpha$ is called the {\em Spin$^c$ component} of $g$.

Given an element $a=(S,T,\phi)\in \A(4k)$, we consider the segments $[i,\phi(i)]$ seen as subsets of $Z\setminus z$. The sum of these segments determines a class $[a]\in H_1(Z\setminus z,\mathbf{a})$. We denote by $\text{inv}(a)$ the number of inversions of $a$. Let $\iota(a)=\text{inv}(a)-m(S,[a])$.
Then one defines
\begin{equation}\label{def:gr'}
 \gr'(a)=(\iota(a),[a])
\end{equation}

It follows from~\cite[Prop. 3.39]{lot} that $\gr'(a)\in G'(4k)$. Moreover, $\gr'$ is invariant under adding horizontal strands.
Let $\brho=\{\rho_1,\dots,\rho_n\}$ be a set of Reeb chords and $\s\subset[2k]$ be such that $I(\s)a(\brho)\neq 0$. We can see $\brho$ as an element of $\A(4k)$ with no horizontal strands. So $\gr'(I(\s)a(\brho))=\gr'(\brho)$. Let $|\brho|$ denote the number of elements of $\brho$, and let $|\text{ab}(\brho)|$ and $|\text{int}(\brho)|$ denote the number of abutting and interleaved pairs in $\brho$, respectively. Each chord contributes $-1/2$ to $m(S,[\brho])$. Moreover, for each nested or interleaved pair, we obtain an extra contribution of $-1$ to $m(S,[\brho])$. For each abutting pair, we obtain an extra contribution of $-1/2$ to $m(S,[\brho])$. We note that every inversion comes from a nested pair. So the contribution from the nested pairs is actually 0. Therefore
\begin{equation}\label{eq:i}
 \iota(\brho)=-\frac{|\brho|}{2}-\frac{|\text{ab}(\brho)|}{2}-|\text{int}(\brho)|.
\end{equation}

By making some non-canonical choices, one can also define a {\em refined grading} taking values in a subgroup of $G'(4k)$, see~\cite[\S 3.3.2]{lot}. That is necessary for the gluing theorems to behave well with respect to the grading. Alternatively, as suggested in~\cite[Rem. 10.44]{lot}, one could consider a more canonical subset of $G'(4k)$, as follows. Let $M_*:H_0(\mathbf{a})\to H_0([2k])$ denote the pushforward of the map $M:\mathbf{a}\to [2k]$. Define $G'(\ZZ)$ to be the set of elements $(j,\alpha)$ in $G'(4k)$ such that $M_*(\partial \alpha)=\mathbf{t}-\mathbf{s}$, for $\mathbf{t},\mathbf{s}\subset[2k]$, with $|\mathbf{t}|=|\mathbf{s}|$. We observe that $G'(\ZZ)$ is a groupoid and that $\gr'(a)\in G'(\ZZ)$ for every homogeneous element $a\in\A(\ZZ)$. We recall that the notation $G(\ZZ)$ was used in~\cite{lot} for the refined grading group, but in the current paper $G(\ZZ)$ denotes the groupoid on which the geometric grading takes values. We now have the following proposition.

\begin{prop}\label{prop:noncan}
 There exists a homomorphism $\mathcal{F}:G(\ZZ)\to G'(\ZZ)$ such that $\mathcal{F}(\text{\em gr}(a))=\text{\em gr}'(a)$ for every homogeneous element $a\in \A(\ZZ)$.
\end{prop}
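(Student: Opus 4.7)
The plan is to define $\mathcal{F}$ by a direct Pontryagin--Thom recipe and then verify the two required identities. Given $[v]\in G(\s,\t)$, I would choose a representative $v$ with boundary values $v_{\s}$ and $v_{\t}$, and use the trivialization $\tau$ from Step 1 of the proof of Proposition~\ref{prop:propgrad} to view $v$ as a map $F\times[0,1]\to S^2$. Let $Q_{[v]}:=v^{-1}(0,0,1)$ denote the resulting framed Pontryagin submanifold. Outside $\bar{\mathfrak{N}}\times[0,1]$, $Q_{[v]}$ consists of vertical arcs $\{p\}\times[0,1]$ with $p\in P$, so all non-trivial information lies inside $\bar{\mathfrak{N}}\times[0,1]$. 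Projecting $Q_{[v]}\cap(\bar{\mathfrak{N}}\times[0,1])$ to $\bar{\mathfrak{N}}$, retracting each $1$-handle onto an arc with endpoints in $\a$, and identifying endpoints via $M$ yields a $1$-chain in $(Z\setminus z,\a)$ whose homology class $\alpha_{[v]}\in H_1(Z\setminus z,\a)$ is independent of the chosen representative. Comparing the framing on $Q_{[v]}$ with the canonical one coming from $\tau$ yields a half-integer $j_{[v]}\in\tfrac{1}{2}\Z$, the $\tfrac{1}{2}\Z$ appearing because of the $\pi$-twist contributions at the endpoints in $\a$. I then set $\mathcal{F}([v]):=(j_{[v]},\alpha_{[v]})$.

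Well-definedness is then automatic, since homotopy rel boundary induces framed cobordism rel boundary of $Q_{[v]}$, which preserves both $\alpha$ and $j$. To check that $\mathcal{F}([v])\in G'(\ZZ)$, I would read off $\partial\alpha_{[v]}$ directly from $\partial Q_{[v]}$, obtaining $M_*(\partial\alpha_{[v]})=\t-\s$, and verify the congruence $j_{[v]}\equiv\epsilon(\alpha_{[v]})\pmod 1$ by matching both sides to the parity count of points of $\a$ at which the local multiplicity of $\alpha_{[v]}$ jumps by an odd amount.

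Agreement with $\gr'$ on a generator $I(\s)a(\brho)$ splits into two computations. Step 1 of the proof of Proposition~\ref{prop:propgrad} already identifies $Q_{(\s,\brho)}$ with (a model of) the strand diagram of $\brho$; its projection to $Z$ realizes the chain $\sum_i[\rho_i]=[a(\brho)]$, so the $\alpha$-component of $\mathcal{F}(\gr(I(\s)a(\brho)))$ equals $[a(\brho)]$. For the Maslov component I would compute the framing rotation along each $\tilde\gamma_i$ directly: each Reeb chord contributes $-1/2$ from the boundary $\pi$-twists, each abutting pair an extra $-1/2$ from the joining, and each interleaved pair $-1$ from the over/under crossing forced by the ordering prescribed in \S\ref{construction:G}. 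Summation reproduces~\eqref{eq:i}, that is $\iota(\brho)$, as desired. Compatibility with the $\Z$-action is automatic: $\lambda^n$ inserts a Hopf linked unknot of framing $-n$ in a small ball, shifting $j$ by $-n$ without changing $\alpha$, in agreement with the central $\Z$ in $G'(\ZZ)$.

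It remains to prove that $\mathcal{F}$ is a homomorphism (i.e., a functor of groupoids). Concatenation of vector fields gives the concatenation of their Pontryagin submanifolds, so $\alpha_{[v]\cdot[w]}=\alpha_{[v]}+\alpha_{[w]}$. The framings concatenate as well, but putting the concatenation in generic position in $F\times[0,1]$ introduces additional local framing contributions at the places where strands of $Q_{[v]}$ cross strands of $Q_{[w]}$; a careful tally shows that these crossings add up to $m(\partial\alpha_{[v]},\alpha_{[w]})=L(\alpha_{[v]},\alpha_{[w]})$, which is precisely the correction term in the multiplication rule of $G'(\ZZ)$. I expect this last step to be the main obstacle: pinning down the local half-integer contributions and matching all signs with the conventions of~\cite{lot} takes some care, although the generator case (Step 2 of the proof of Proposition~\ref{prop:propgrad}) provides a useful sanity check, since there both sides are already known explicitly.
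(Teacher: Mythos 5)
Your proposal is correct and follows essentially the same route as the paper: the paper also defines $\mathcal{F}$ via the Pontryagin submanifold $L_v=v^{-1}(0,0,1)$ in $\bar{\mathfrak{N}}\times[0,1]$ (with the framing made concrete as the preimage $K_v$ of a nearby regular value), takes the Spin$^c$ component to be $\sum_j[L_j]\in H_1(Z\setminus z,\a)$ and the Maslov component to be half the signed crossing count of the projections, verifies $j\equiv\epsilon(\alpha)\pmod 1$ by the same parity count over points of $\a$, establishes the homomorphism property via exactly the identity $\tfrac12(L_j^v\cdot K_l^w+L_l^w\cdot K_j^v)=L([L_j^v],[L_l^w])$ you anticipate, and computes the generator case with the same $-1/2$ per chord, $-1/2$ per abutting pair, $-1$ per interleaved pair tally. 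The step you flag as the main obstacle is handled in the paper by putting $v$ in a ``standard'' position near the handles and localizing the cross-crossings near $\partial[L_j^v]$, but your outline of what must be checked is the right one.
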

\begin{proof}
Let $\mathfrak{N}$ and $\bar{\mathfrak{N}}$ be as in the proof of Proposition~\ref{prop:propgrad}. Let $\tau$ be the trivialization of $T(F\times[0,1])$ constructed in that proof.

For each $\s\subset[2k]$, we see $v_{\s}$ as a map $F\to S^2$. We can slightly perturb the vector fields $v_{\s}$ so that $(0,0,1)$ is a regular value of these maps. As in the proof of Proposition~\ref{prop:propgrad}, we observe that $v_{\s}^{-1}(0,0,1)=\s\cup P$, where $P$ is a set of points in the complement of $\bar{\mathfrak{N}}$ that does not depend on $\s$.

Now let $[v]\in G(\ZZ)$. Then $[v]\in G(\s,\t)$, for some $\s,\t\subset[2k]$, such that $|\s|=|\t|$. We see the vector field $v$ as a map $F\times[0,1]\to S^2$. We can slightly homotope $v$ in $F\times(0,1)$ so that $(0,0,1)$ is a regular value of $v$. Now consider $L_v:=v^{-1}(0,0,1)$. Observe that $L_v\cap (F\times\{0\} )=(\s\cup P)\times\{0\}$ and $L_v\cap (F\times\{1\})=(\t\cup P)\times\{1\}$. 
Since the map $\iota_*:H_1(\bar{\mathfrak{N}})\to H_1(F)$ induced from the inclusion is an isomorphism, it follows that $L_v$ is relatively framed homotopic to $\tilde{L}_v\cup (P\times [0,1])$, where $\tilde{L}_v$ is a framed one-manifold contained in $\bar{\mathfrak{N}}\times[0,1]$, which is transverse to $F\times\{t\}$ for all $t$, and the framing on $P\times [0,1]$ is trivial. By the Pontryagin-Thom construction, we can homotope $v$ and obtain $v'$ such that $L_{v'}=\tilde{L}_v\cup (P\times [0,1])$. So we can assume, without loss of generality, that $L_v=\tilde{L}_v\cup (P\times[0,1])$. Now let $K_v=v^{-1}(\delta,0,\sqrt{1-\delta^2})$, for a small $\delta>0$, such that $(\delta,0,\sqrt{1-\delta^2})$ is a regular value of $v$. We write $\tilde{K}_v=K_v\cap (\bar{\mathfrak{N}}\times[0,1])$. We will project $\tilde{L}_v$ and $\tilde{K}_v$ to $\bar{\mathfrak{N}}$ and for each intersection of the two projections, we keep track of which strand goes above which strand. Now we will make an extra assumption so that the count of intersections is well-defined. For each critical point $p_i\in[2k]$, recall that $H_i$ denotes the corresponding 1-handle and write $q_i=v_{\{p_i\}}^{-1}(\delta,0,\sqrt{1-\delta^2})$. Let $\mathcal{L}$ be the closure of a connected component of $L_v\cap((H_i\setminus \{p_i\})\times[0,1])$, let $t_0,t_1\in[0,1]$ be such that $\partial \mathcal{L}\subset (F\times\{t_0\})\cup (F\times\{t_1\})$ and let $\eta_{\mathcal{L}}=K_v\cap(H_i\times[t_0,t_1])$. We say that $\mathcal{L}$ is {\em standard} if:
\begin{itemize}
\item $L_v\cap (H_i\times[t_0,t_1])=\mathcal{L}$,
 \item the projection of $\mathcal{L}$ to $H_i$ is a one-manifold contained in the core of $H_i$,
\item  the projection of $\eta_{\mathcal{L}}$ to $H_i$ is a one-manifold whose boundary contains $q_i$,
\item the projection of the braid $(\mathcal{L},\eta_{\mathcal{L}})$ is isotopic to one of the four possibilities depicted in Figure~\ref{poss}.
\end{itemize}
We say that $v$ is {\em standard} if every connected component of $L_v\cap((H_i\setminus \{p_i\})\times[0,1])$ is standard, for all $i$. Up to a relative homotopy of $v$, we can assume $v$ is standard.

Let $L_1,\dots,L_n$ be the connected components of $\tilde{L}_v\cap(\mathfrak{N}\setminus\times[0,1])$. We observe that $\partial L_j$ is contained in a small neighborhood of $\a\times[0,1]$. So $L_j$ gives rise to an element in $H_1(Z\setminus z,\mathbf{a})$, which we denote by $[L_j]$. We define $$\mathcal{F}_{\text{sp}}([v])=\sum_{j=1}^n[L_j]\in H_1(Z\setminus z,\mathbf{a}).$$ Note that $M_*(\partial \mathcal{F}_{\text{sp}}([v]))=\t-\s$. 

Now we define the Maslov component $\mathcal{F}_{m}([v])$. Let $K_1,\dots,K_n$ be the connected components of $\tilde{K}_v\cap(\mathfrak{N}\times[0,1])$ corresponding to $L_1,\dots,L_n$. For $1\le j,l\le n$, let $L_j\cdot K_l$ denote the signed count of intersections of the projections of $L_j\cap((\mathfrak{N}\setminus \bigcup_iH_i)\times[0,1])$ and $K_l\cap((\mathfrak{N}\setminus\bigcup_i H_i)\times[0,1])$ to $\mathfrak{N}$. The signs corresponding to each intersection are determined by our sign convention, as in Figure~\ref{signs}. Note that this is the opposite of the usual sign convention. Now we define $$\mathcal{F}_m([v])=\frac{1}{2}\sum_{j,l=1}^nL_j\cdot K_l.$$

\begin{figure}[htb]
\centering \def\svgwidth{450pt}
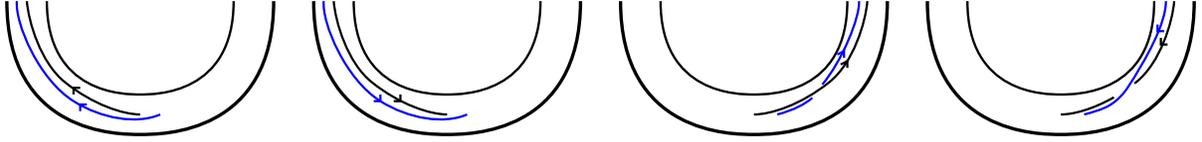 
\caption{The four possibilities of a standard $\mathcal{L}$. The framing is indicated by the blue arc.} \label{poss}
\end{figure}

We now claim that $\epsilon(\mathcal{F}_{\text{sp}}([v]))\equiv\mathcal{F}_m([v])$ (mod 1). We can compute $\epsilon(\mathcal{F}_{\text{sp}}([v]))$ as follows. For each $p\in\mathbf{a}$, we define $m(p)$ to be the number of classes $[L_j]$ whose boundary contains $p$. We observe that $\sum_{p\in\mathbf{a}} m(p)=2n$. Now let $\text{par}(m(p))=1$ if $m(p)$ is odd, and $\text{par}(m(p))=0$ if $m(p)$ is even. By definition, $\epsilon(\mathcal{F}_{\text{sp}}([v]))\equiv\frac{1}{4}\sum_{p\in\mathbf{a}}\text{par}(m(p))\,(\text{mod 1}).$ We observe that
$$\frac{\text{par}(m(p))-m(p)}{4}\equiv\frac{m(p)(m(p)-1)}{4}\,\text{(mod 1)}.$$
So it is enough to show that 
\begin{equation}\label{eq:fm}
\mathcal{F}_m([v])\equiv \frac{1}{2}\Big(n+\sum_{p\in\mathbf{a}}\frac{m(p)(m(p)-1)}{2}\Big)\,(\text{mod 1}).
 \end{equation}
We first observe that the projections of $L_j$ and $K_j$ intersect an odd number of times in $\mathfrak{N}\setminus\bigcup_i H_i$, for every $j$. Now, for each point $p\in\mathbf{a}$, if $m(p)>1$, we obtain intersection betweens the projections of $L_j$ and $K_l$ for $j\neq l$ as in Figure~\ref{mp}. In fact, we obtain $1+2+\dots+(m(p)-1)=\frac{1}{2}(m(p)(m(p)-1))$ intersections. All other intersections of $L_j$ and $K_l$ for $j\neq l$ correspond to intersections of the projections of $L_j$ and $L_l$ for $j\neq l$ and they come in pairs, see Figure~\ref{lj}. So we obtain~\eqref{eq:fm}. Hence we can define $$\mathcal{F}([v])=(\mathcal{F}_{m}([v]),\mathcal{F}_{\text{sp}}([v]))\in G'(4k).$$

\begin{figure}
 \centering
\begin{minipage}[b]{0.35\textwidth}
\def\svgwidth{\textwidth}
 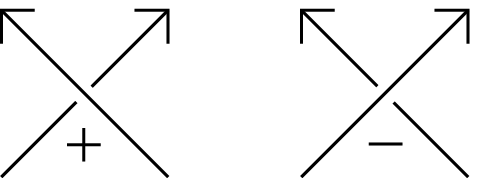
\subcaption{The signs of a crossing}
\label{signs}
\end{minipage}\qquad
\begin{minipage}[b]{0.25\textwidth}
\def\svgwidth{\textwidth}
 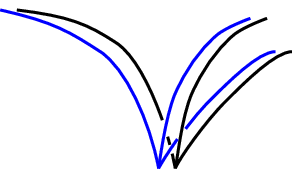
\subcaption{$p\in\mathbf{a}$ with $m(p)=3$}
\label{mp}
\end{minipage}\qquad
\begin{minipage}[b]{0.2\textwidth}
\def\svgwidth{\textwidth}
 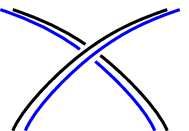
\subcaption{$L_j$ and $L_{j'}$}
\label{lj}
\end{minipage}
\caption{}
\end{figure}

To prove that $\mathcal{F}$ is a homomorphism, we need to show that $\mathcal{F}([v\cdot w])=\mathcal{F}([v])\cdot\mathcal{F}([w])$. We observe that $L_{v\cdot w}$ is the concatenation of $L_{v}$ with $L_w$. We denote the connected components of $L_{v}\cap(\mathfrak{N}\times[0,1])$ by $L_1^v,\dots,L_n^v$ and the connected components of $L_{w}\cap(\mathfrak{N}\times[0,1])$ by $L_1^w,\dots,L_m^w$. So the connected components of $L_{v\cdot w}\cap(\mathfrak{N}\times[0,1])$ are $L_1^v,\dots,L_n^v,L_1^w,\dots,L_m^w$. Note that it might be possible to homotope $v\cdot w$ so that $L_{v\cdot w}\cap(\mathfrak{N}\times[0,1])$ has fewer than $n+m$ connected components, but we will not do so. It follows from our description of the connected components of $L_{v\cdot w}\cap(\mathfrak{N}\times[0,1])$ that $$\mathcal{F}_{\text{sp}}([v+w])=\mathcal{F}_{\text{sp}}([v])+\mathcal{F}_{\text{sp}}([w]).$$ We define $K_1^v,\dots,K_n^v,K_1^w,\dots,K_m^w$ analogously for $K_v$ and $K_w$. We can choose $v$ and $w$ such that the projections to $\mathfrak{N}$ of all of these one-manifolds intersect transversely in $\mathfrak{N}\setminus \bigcup_i H_i$ and such that $v$ and $w$ are standard. In particular, $v\cdot w$ is standard. So 
$$\mathcal{F}_m([v\cdot w])=\frac{1}{2}\Big(\sum_{j,l=1}^nL_j^v\cdot K_l^v+\sum_{j=1}^n\sum_{l=1}^m(L_j^v\cdot K_l^w+ L_l^w\cdot K_j^v)+\sum_{j,l=1}^mL_j^w\cdot K_l^w\Big).$$ It follows from the definition that 
\begin{align*}
 \mathcal{F}_m([v])&=\frac{1}{2}\sum_{j,l=1}^nL_j^v\cdot K_l^v,\\
\mathcal{F}_m([w])&=\frac{1}{2}\sum_{j,l=1}^mL_j^w\cdot K_l^w.
\end{align*}
We now claim that \begin{equation}\label{eq:l}
\frac{1}{2}(L_j^v\cdot K_l^w+L_l^w\cdot K_j^v)=L([L_j^v],[L_l^w]).                                                                                                                                                                                                                                                                                                                                                                                                                                                                                                                                                                                                                                                                                  \end{equation}
We note that both sides of~\eqref{eq:l} change by a factor of $-1$ if we change the orientation of either $L_j^v$ or $L_l^w$. So we can assume that the projections of $L_j^v$ and $L_l^w$ are positively oriented with respect to $Z\setminus z$. We can also assume that the projections of $(L_j^v,K_j^v)$ and $(L_l^w,K_l^w)$ only intersect in $\mathfrak{N}$ near $\partial[L_j^v]$. Note that $(L_j^v,K_j^v)$ is always below $(L_l^w,K_l^w)$. Now we write $\partial[L_j^v]=p_1-p_0$. For $i=0,1$, if $p_i$ is in the interior of $[L_l^w]$, we obtain a contribution of $(-1)^i$ to both sides of~\eqref{eq:l}, and if $p_i$ is on the boundary of $[L_l^w]$, we obtain a contribution of $(-1)^i/2$ to both sides of~\eqref{eq:l}. For examples, see Figure~\ref{interleaved} and~\ref{abutting}, respectively. Hence~\eqref{eq:l} holds. Therefore $\mathcal{F}_m([v]\cdot[w])$ is the Maslov component of $\mathcal{F}([v])\cdot\mathcal{F}([w])$. We also observe that $\mathcal{F}([v]^{-1})=\mathcal{F}([v])^{-1}$.

It remains to show that for a generator $I(\s)a(\brho)$ of $\A(\ZZ)$, we have $\mathcal{F}(\gr(I(\s)a(\brho)))=\gr'(\brho)$. We first order $\rho_1,\dots,\rho_n$ as in Step 3 of \S\ref{construction:G}. Let $v$ be the vector field constructed in \S\ref{construction:G} whose relative homotopy class is $\gr(I(\s)a(\brho))$. Let $L_v$ and $K_v$ be as above. The 1-manifold $L_v\cap(\mathfrak{N}\times[0,1])$ is the union of arcs $L_i$, one for each Reeb chord $\rho_i$. Up to a relative isotopy of $L_v$, we can assume that the projection of $L_v\cap(\mathfrak{N}\times[0,1])$ has minimal number of intersections, i.e. there is no relative isotopy of $L_v$ that decreases the number of intersections. It follows from the ordering of the Reeb chords that if the projections of $L_i$ and $L_j$ intersect for $i<j$, then the pair $\{\rho_i,\rho_j\}$ is interleaved and this is a negative intersection. Now the framing on $L_i$ is trivial, so $(L_i,K_i)$ is as shown in Figure~\ref{reeb}. Thus $L_i\cdot K_i=-1$ So for each Reeb chord $\rho_i$, we get a contribution of $-1/2$ to the Maslov component of $\mathcal{F}(\gr(I(\s)a(\brho)))$. Moreover, each interleaved pair gives rise to two negative intersections of the projections of $L_v$ and $K_v$, see Figure~\ref{interleaved}. So each interleaved pair contributes $-1$ to the Maslov component of $\mathcal{F}(\gr(I(\s)a(\brho)))$.
Finally, if $\rho_i$ and $\rho_j$ abut, then we get an extra negative intersection, see Figure~\ref{abutting}. So an abutting pair contributes $-1/2$ to the Maslov component of $\mathcal{F}(\gr(I(\s)a(\brho)))$. Therefore, using \eqref{eq:i}, we conclude that
$$\mathcal{F}_m(\gr(I(\s)a(\brho)))=\iota(\brho).$$
Hence $\mathcal{F}(\gr(I(\s)a(\brho)))=\gr'(\brho)$.
\end{proof}

\begin{figure}
 \centering
\begin{minipage}[b]{0.22\textwidth}
\def\svgwidth{\textwidth}
 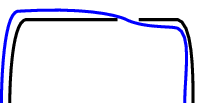
\subcaption{A Reeb chord}
\label{reeb}                        
\end{minipage}\qquad
\begin{minipage}[b]{0.4\textwidth}
\def\svgwidth{\textwidth}
 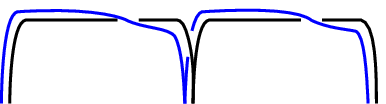
\subcaption{An abutting pair}
\label{abutting}
\end{minipage}\qquad
\begin{minipage}[b]{0.25\textwidth}
\def\svgwidth{\textwidth}
 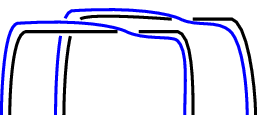
\subcaption{An interleaved pair}
\label{interleaved}
\end{minipage}
\caption{}
\end{figure}

\section{Grading on the modules}\label{sec:mod}

Let $Y$ be an oriented, connected, compact 3-manifold with connected boundary. Following \cite{lot}, we consider the bordered Heegaard diagram $$\mathcal{H} = (\Sigma,\alpha^c_1,\cdots,\alpha^c_{g-k},\alpha^a_1,\cdots,\alpha^a_{2k},\beta_1,\cdots,\beta_{g},z)$$ which is compatible with $Y$ in the sense that the following conditions are satisfied:

\begin{itemize}
    \item $\Sigma$ is a compact oriented surface with a single boundary component.
    \item $(\Sigma \cup_\partial D^2,\alpha^c,\beta)$ is a Heegaard diagram for $Y$.
    \item $\alpha^a_1,\cdots,\alpha^a_{2k}$ are pairwise disjoint, embedded arcs in $\Sigma$ with boundary on $\partial\Sigma$, and are disjoint from the $\alpha^c_i$.
    \item $\Sigma \setminus (\alpha^c_1 \cup \cdots \cup \alpha^c_{g-k} \cup \alpha^a_1 \cup \cdots \cup \alpha^a_{2k})$ is a disk with $2(g-k)$ holes.
    \item $z$ is a point in $\partial\Sigma$, disjoint from all of the $\alpha^a_i$.
\end{itemize}

We will abbreviate $\bm{\alpha}^c = \alpha^c_1 \cup \cdots \cup \alpha^c_{g-k}$, $\bm{\alpha}^a = \alpha^a_1 \cup \cdots \cup \alpha^a_{2k}$, $\bm\alpha = \bm{\alpha}^c \cup \bm{\alpha}^a$, and $\bm\beta = \beta_1 \cup \cdots \cup \beta_{g}$.

In this section, we explain how to define the grading on the modules $\widehat{\mathit{CFA}}(\H)$ and $\widehat{\mathit{CFD}}(\H)$. We start by defining the grading sets $S(\H)$ and $\bar{S}(\H)$.

\subsection{The grading set}\label{grading:set}

Let $F=\partial Y$. We recall from \cite{lot} that $\H$ gives rise to a pointed matched circle $\ZZ=(Z,\a,M,z)$, where $Z=\partial \Sigma$, $\a=\bm\alpha^c\cap Z$ and $M$ maps both points in $\alpha^c_i\cap Z$ to $i\in[2k]$ for every $i$. For $\s\in[2k]$, we denote by $\Vect(Y,v_{\s})$ the set of homotopy classes of nonvanishing vector fields in $Y$ whose restriction to $F$ is $v_{\s}$. Since $F$ is connected, $\Vect(Y,v_{\s})$ is nonempty if and only if $|\s|=k$. Let $$S(\H)=\coprod_{|\s|=k}\Vect(Y,v_{\s}).$$

We observe that the groupoid $G(\ZZ)$ acts on $S(\H)$ on the right by concatenation. More precisely, given vector fields $v$ and $w$ such that $[v]\in\Vect(Y,v_{\s})$ and
$[w]\in G(\s,\t)$, define $[v]\cdot [w]$ as follows. Identify a collar neighborhood $N(F)$ of $F$ in $Y$ with $F\times [0,1]$ and take a representative $\tilde{v}$ of $[v]$ which is $[0,1]$-invariant in $N(F)\cong F\times [0,1]$. Now define $[v]\cdot[w]\in\Vect(Y,v_{\t})$ to be the relative homotopy class of the vector field which equals $\tilde{v}$ in the complement of $N(F)$ and $w$ in $N(F)\cong F\times [0,1]$. 
Note that we also have a $\Z$-action on $S(\H)$ just as before, which we again denote multiplicatively by $\lambda^n$ on the left.
We also observe that this action need not be free. In fact, let $[v]\in S(\H)$ and denote by $v^{\perp}$ the orthogonal complement of $v$, seen as a complex line bundle. Then $\lambda^d\cdot[v]=[v]$ for every $d= \langle c_1(v^{\perp}),A\rangle$, for some $A\in H_2(Y)$.

Now we denote by $-\ZZ$ the pointed matched circle obtained by switching the orientation of $Z$, i.e. $-\ZZ=(-Z,\a,M,z)$.
We observe that the groupoid $G(-\ZZ)$ acts on $S(\H)$ on the left, as follows. Given a vector field $w$ in $(-F)\times[0,1]$, we define $\bar{w}$ to be the vector field in $F\times[0,1]$ given by $\bar{w}(x,t)=w(x,1-t)$. So, given a vector field $v$ in $Y$, if $v$ and $\bar{w}$ coincide along $F\cong F\times\{1\}$, we can glue them along $F\cong F\times\{1\}$ and obtain a new vector field in $Y$, which we denote by $\bar{w}\cdot v$. So, given $[w]\in G(\s,\t)\subset G(-\ZZ)$ and $[v]\in\Vect(Y,v_{\s})$, we can define $[w]\cdot[v]$ to be $[\bar{w}\cdot v]$.

The homotopy classes $[v],[w]\in\Vect(Y,v_{\s})$ are said to be in the same relative Spin$^c$ structure if $v$ is homotopic to $w$ on the 2-skeleton relative to the boundary.
We observe that there exists $n\in\Z$ such that $[v]=\lambda^n\cdot [w]$ if, and only if, $[v],[w]\in\Vect(Y,v_{\s})$ and $v$ an $w$ are in the same relative Spin$^c$ structure.

\subsection{Homotopy classes of vector fields}\label{sec:htpy}

The goal of this section is provide a new way to compute the difference between homotopy classes of nonvanishing vector fields, based on the Pontryagin-Thom construction. The construction here is inspired by and very similar to the work of Dufraine~\cite{du}. Let $Y$ be a closed oriented 3-manifold. Suppose $\xi,\eta$ are nonvanishing vector fields on $Y$. By a $C^\infty$-small perturbation, we can assume that the set $$L=L_{\xi,\eta}=\{y \in Y~|~\xi(y)=-\eta(y)\}$$ is a link in $Y$. In the case that $[L]=0 \in H_1(Y;\mathbb{Z})$, there exists an embedded compact surface $\Sigma \subset Y$ with $\partial\Sigma = L$. Choosing a Riemannian metric on $Y$, we consider the orthogonal complement $\eta^\bot$ of $\eta$, which is a co-oriented plane field on $Y$. Since $\Sigma$ deformation retracts onto a wedge of circles, we can choose a trivialization $\tau: \eta^\bot|_\Sigma \to \Sigma\times\mathbb{R}^2$. This in turn gives a trivialization $\tilde\tau: TY|_\Sigma \to \Sigma\times\mathbb{R}^3$ by setting $\tilde\tau^*(\partial_z)$ to be equal to $\eta$, where $(x,y,z)$ are the coordinates in $\mathbb{R}^3$. Let $N(\Sigma)$ denote a small tubular neighborhood of $\Sigma$ in $Y$. Then $\tilde{\tau}$ gives rise to a trivialization $TY|_{N(\Sigma)}\cong N(\Sigma)\times\R^3$.

Using the above trivialization, we can see $\xi|_{N(\Sigma)}$ as a map $\xi_ \tau:N(\Sigma) \to S^2 \subset \mathbb{R}^3$. It is clear from the construction that $L_{\xi,\eta}=\xi_ \tau^{-1}(0,0,-1)=\partial \Sigma$. Taking the pre-image of a regular value close to $(0,0,-1)$ in $S^2$, we get a framing on $L_{\xi,\eta}$. We represent this framing by a number $n_{\xi,\eta}$, given by the difference from the Seifert framing. We note that $n_{\xi,\eta}$ is independent of the Seifert surface and the trivialization of $\eta^\bot|_\Sigma$, modulo the divisibility of $c_1(\eta^\bot)$. The following proposition gives a way to compute the difference between homotopy classes of nonvanishing vector fields. The result was essentially known by Dufraine~\cite{du} but we write down a proof here for the readers' convenience.

\begin{prop} \label{htpydiff}
Let $\xi$ and $\eta$ be vector fields on $Y$ and let $d$ denote the divisibily of $c_1(\eta^\bot)$. Then
\begin{enumerate}[(a)]
\item $\xi$ is homotopic to $\eta$ if and only if $L_{\xi,\eta}$ is nullhomologous and $n_{\xi,\eta}\equiv0$ (mod $d$).
 \item $\xi$ and $\eta$ are in the same Spin$^c$ structure if and only if $L_{\xi,\eta}$ is nullhomologous. If that is the case, then $[\xi]=\lambda^{n_{\xi,\eta}}\cdot[\eta]$.
\end{enumerate}
\end{prop}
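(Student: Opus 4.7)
The plan is to use the Pontryagin--Thom construction applied to $\xi$ viewed as a map to $S^2$ via $\eta$ as the reference direction. First I would set up the framework globally: fix a Riemannian metric, and for any 2-complex $X \subset Y$ choose a trivialization $\tau\colon \eta^\bot|_X \to X\times \R^2$ which, extended by $\eta$ itself, gives a trivialization $\tilde\tau\colon TY|_{N(X)} \to N(X)\times \R^3$. Under this trivialization $\xi$ becomes a map $\xi_\tau\colon N(X) \to S^2$ with $L_{\xi,\eta} = \xi_\tau^{-1}(0,0,-1)$, and the nearby regular value supplies the framing computed by $n_{\xi,\eta}$. The relative Pontryagin--Thom theorem tells us that two vector fields are homotopic over $N(X)$ rel boundary iff their framed Pontryagin submanifolds are framed cobordant.

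For part (b), I would argue in both directions. If $\xi$ and $\eta$ lie in the same $\spinc$ structure, the standard obstruction theory identifies the primary obstruction to homotoping over the 2-skeleton of $Y$ with a class in $H^2(Y;\pi_2(S^2))\cong H^2(Y;\Z)$ which, via Poincar\'e duality, is represented by $[L_{\xi,\eta}] \in H_1(Y;\Z)$; since the obstruction vanishes, $L_{\xi,\eta}$ is nullhomologous. Conversely, given a Seifert surface $\Sigma$ with $\partial\Sigma = L_{\xi,\eta}$, I extend $\tau$ to $N(\Sigma)$; outside $N(L_{\xi,\eta})$ the vectors $\xi$ and $-\eta$ are never antipodal, so the straight-line homotopy from $\xi$ to $\eta$ (in the trivialization) is defined on the complement of $N(L_{\xi,\eta})$ and in particular in a neighborhood of the 2-skeleton, which shows that $\xi$ and $\eta$ define the same $\spinc$ structure.

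To compute the integer $n$ with $[\xi] = \lambda^n\cdot[\eta]$ when $L_{\xi,\eta}$ is nullhomologous, I would produce an explicit model. Using $\Sigma$, exhibit the framed link $(L_{\xi,\eta}, n_{\xi,\eta})$ as framed cobordant inside $Y$ to a framed unknot with framing $n_{\xi,\eta}$ contained in a small 3-ball $B$; via Pontryagin--Thom this cobordism produces a homotopy from $\xi$ to a vector field $\xi'$ which equals $\eta$ outside $B$ and whose Pontryagin submanifold inside $B$ is a framed unknot with framing $n_{\xi,\eta}$. By the identification $\pi_3(S^2)\cong\Z$ and the sign convention fixed in \S\ref{sub:group} (where the Hopf map acts by $\lambda^{-1}$), the vector field on $B$ associated to such a framed unknot differs from $\eta|_B$ by the action of $\lambda^{n_{\xi,\eta}}$, so $[\xi]=\lambda^{n_{\xi,\eta}}\cdot[\eta]$. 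Part (a) is then immediate: $\xi\simeq\eta$ iff they are in the same $\spinc$ structure, so $[L_{\xi,\eta}]=0$, and $\lambda^{n_{\xi,\eta}}$ acts trivially on $[\eta]$, which (by the definition of $d$ as the divisibility of $c_1(\eta^\bot)$ and the identification $\langle c_1(\eta^\bot),\Sigma\rangle$ with changes in the Seifert framing induced by capping) happens exactly when $n_{\xi,\eta}\equiv 0 \pmod d$.

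The main obstacle I expect is the careful bookkeeping in the framed-cobordism step that identifies the integer $n_{\xi,\eta}$---defined as a framing difference from the Seifert framing through a specific local model---with the signed count governing the $\lambda$-action, and showing that this identification is independent of the choice of Seifert surface and of trivialization of $\eta^\bot|_\Sigma$ modulo $d$. This independence reduces to noting that changing $\Sigma$ by a closed surface $S$ shifts the framing by $\langle c_1(\eta^\bot),[S]\rangle$, which is a multiple of $d$, matching the ambiguity in the $\Z$-action.
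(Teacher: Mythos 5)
Your argument is correct and rests on the same two pillars as the paper's proof: the relative Pontryagin--Thom construction over a tubular neighborhood of a Seifert surface for $L_{\xi,\eta}$, and the identification of the residual ambiguity with the $\pi_3(S^2)\cong\Z$ action localized in a ball. The logical organization is reversed, though. The paper proves (a) first --- the forward direction by a transversality argument in $Y\times[0,1]$ producing an embedded surface bounding $L_{\xi,\eta}$, the converse by the linear homotopy on the complement of $N(\Sigma)$ together with Pontryagin--Thom on $N(\Sigma)$ --- and then obtains (b) by replacing $\eta$ with the field $\tilde\eta$ obtained by acting by $\lambda^{n_{\xi,\eta}}$ in a small ball, so that $n_{\xi,\tilde\eta}=0$ and (a) applies. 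You instead prove (b) first (obstruction theory for one direction, a framed cobordism to a framed unknot in a ball for the formula $[\xi]=\lambda^{n_{\xi,\eta}}\cdot[\eta]$) and deduce (a) from the fact that the stabilizer of $[\eta]$ under the $\Z$-action is $d\Z$; both routes are sound, and yours trades the paper's explicit transversality step for the standard identification of the difference class with $\mathrm{PD}[L_{\xi,\eta}]$. One small imprecision: the complement of a tubular neighborhood of the link $L_{\xi,\eta}$ need not contain a neighborhood of a 2-skeleton of $Y$ (a 2-complex and a 1-manifold in a 3-manifold generically intersect in points), so the linear-homotopy justification of ``nullhomologous implies same Spin$^c$'' should not be phrased that way; it is cleaner to read this direction off from the obstruction-theoretic statement you already invoke, and in any case your computation $[\xi]=\lambda^{n_{\xi,\eta}}\cdot[\eta]$ subsumes it.
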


\begin{proof}
We start by proving (a). Suppose there exists a 1-parameter family of nonvanishing vector fields $\{\xi_t\}_{t\in[0,1]}$ on $Y$ such that $\xi_0=\xi$, $\xi_1=\eta$. We choose a Riemannian metric on $Y$ such that $\xi_t$ is of unit length. Therefore we define a section $\Xi: Y\times[0,1] \to STY\times[0,1]$ by $\Xi(y,t)=(\xi_t(y),t)$ for all $y \in Y, t \in [0,1]$, where $STY$ denotes the unit tangent bundle. We can also define a section $\mathbb{I}: Y\times[0,1] \to STY\times[0,1]$ by $\mathbb{I}(y,t)=(-\eta(x),t)$.
We observe that $L_{\xi,\eta}=\{(y,0) \in Y\times[0,1]~|~\Xi(y,0)=\mathbb{I}(y,0)\}$ and $\{(y,1) \in Y\times[0,1]~|~\Xi(y,1)=\mathbb{I}(y,1)\}=\emptyset$. By the standard transversality argument, we can assume that $$\{(y,t) \in Y\times[0,1]~|~\Xi(y,t)=\mathbb{I}(y,t)\}$$ is an embedded surface in $Y\times[0,1]$. Therefore $[L_{\xi,\eta}]=0 \in H_1(Y;\mathbb{Z})$.

Conversely, let $\Sigma \subset Y$ be a compact surface such that $\partial\Sigma = L_{\xi,\eta}$, and consider a neighborhood $N(\Sigma)$ of $\Sigma$ in $Y$. Observe that $\xi$ is homotopic to $\eta$ on the complement of $N(\Sigma)$ by a linear homotopy, so we can assume that $\xi=\eta$ on $Y \setminus N(\Sigma)$. Since, again, $N(\Sigma)$ deformation retracts onto a wedge of circles, we can trivialize $\eta^\bot|_{N(\Sigma)}$ and therefore obtain a trivialization of $TY|_{N(\Sigma)}$ by writing $TY=\eta\oplus\eta^{\bot}$. The vector field $\xi$, under this trivialization, sends $L_{\xi,\eta}$ to $(0,0,-1) \in S^2$ as before. The Pontryagin-Thom construction asserts that $\xi$ is homotopic to $\eta$ if and only if the link $L_{\xi,\eta}$ with framing $n_{\xi,\eta}$ is framed cobordant to the empty set. This happens if and only if $L_{\xi,\eta}$ is nullhomologous and $n_{\xi,\eta}\equiv0$ (mod $d$).

We now prove (b). If $\xi$ and $\eta$ are in the same Spin$^c$ structure, then there exists $m\in\Z$ such that $[\xi]=\lambda^m\cdot[\eta]$. Let $\tilde\eta$ be a nonvanishing vector field in $Y$ given by modifying $\eta$ in a very small ball, corresponding to the action of $\lambda^m\in\pi_3(S^2)$. By definition, $[\tilde\eta]=\lambda^m\cdot[\eta]$. So $\xi$ and $\tilde\eta$ are homotopic. By (a), $[L_{\xi,\tilde\eta}]=0$. Moreover, $L_{\xi,\tilde\eta}$ is obtained from $L_{\xi,\eta}$ by a link contained in a ball. Therefore $L_{\xi,\eta}$ is also nullhomologous.

Conversely if $[L_{\xi,\eta}]=0$, then, as explained above, we obtain a framing $n_{\xi,\eta}$ on $L_{\xi,\eta}$. Now we act on $\eta$ by $\lambda^{n_{\xi,\eta}}\in\pi_3(S^2)$, obtaining a vector field $\tilde\eta$. We observe that $L_{\xi,\tilde{\eta}}$ is still nullhomologous and that $n_{\xi,\tilde\eta}=0$. By (a), we conclude that $[\xi]=[\tilde\eta]$. So $[\xi]=\lambda^{n_{\xi,\eta}}\cdot[\eta]$. In particular, $\xi$ and $\eta$ are in the same Spin$^c$ structure. Note that we also proved the second assertion.
\end{proof}

\begin{rmk}
The point of our approach is that in order to compute the difference between $\xi$ and $\eta$, it suffices to trivialize $TY$ along a Seifert surface, which is much easier in practice.
\end{rmk}

\subsection{Grading on $\widehat{\mathit{CFA}}(\H)$}\label{gr:cfa}
We start by recalling the definition of the $\mathcal{A}_{\infty}$-module $\widehat{\mathit{CFA}}(\H)$ from~\cite{lot}.
Let $\mathfrak{S}(\mathcal{H})$ be the set of $g$-tuples $\mathbf{x}=\{x_1,\cdots,x_g\} \subset \bm\alpha \cap \bm\beta$ such that there is exactly one point $x_i$ on each $\beta$-circle and on each $\alpha$-circle and there is at most one $x_i$ on each $\alpha$-arc. Then $\widehat{\mathit{CFA}}(\H)$ is generated as a vector space over $\Z/2$ by $\mathfrak{S}(\mathcal{H})$. We also recall that given $\xx\in\mathfrak{S}(\mathcal{H})$, there is an idempotent $I_A(\xx):=I(o(\xx))$, where $o(\xx)\subset[2k]$ is the set of $\alpha$-arcs containing $x_i$ for some $i$. We have a right action of the ring of idempotents $\mathcal{I}:=\mathcal{I}(\ZZ)$ on $\widehat{\mathit{CFA}}(\H)$ given by
\begin{equation*}
 \xx\cdot I(\s)=\left\{\begin{array}{ll}
                 \xx,&\text{if }I_A(\xx)=I(\s),\\
		 0,&\text{otherwise}.
                \end{array}\right.
\end{equation*}
Let $\A:=\A(\ZZ)$. As explained in~\cite[Ch. 7]{lot}, the $\mathcal{A}_{\infty}$-structure on $\widehat{\mathit{CFA}}(\H)$ is given by maps
$$m_{l+1}:\widehat{\mathit{CFA}}(\H)\otimes_{\mathcal{I}}\A\otimes_{\mathcal{I}}\dots\otimes_{\mathcal{I}}\A\to\widehat{\mathit{CFA}}(\H).$$

Now we want to define a grading function $$\textrm{gr}: \mathfrak{S}(\mathcal{H}) \to S(\H),$$ compatible with the maps $m_{l+1}$. More precisely, let $\xx\in\mathfrak{S}(\mathcal{H})$ and let $a(\brho_1),\dots,a(\brho_l)$ be generators of $\A$. If 
\begin{equation}
\xx\otimes_{\mathcal{I}}a(\brho_1)\otimes_{\mathcal{I}}\dots\otimes_{\mathcal{I}}a(\brho_l)\neq 0\label{eq:cfa}
\end{equation}
then we can write
$$\xx\otimes_{\mathcal{I}}a(\brho_1)\otimes_{\mathcal{I}}\dots\otimes_{\mathcal{I}}a(\brho_l)=\xx\otimes_{\mathcal{I}}I(\s_1)a(\brho_1)\otimes_{\mathcal{I}}\dots\otimes_{\mathcal{I}}I(\s_l)a(\brho_l),$$
for some $\s_1,\dots,\s_l\subset[2k]$.
Note, in particular, that $I(\s_1)=I_A(\xx)$. If $\yy$ is a summand in $m_{l+1}(\xx,a(\brho_1),\dots,a(\brho_l))$, we want $\gr$ to satisfy
$$\gr(\yy)=\lambda^{l-1}\cdot \gr(\xx)\cdot \gr(I(\s_1)a(\brho_1))\dots \gr(I(\s_l)a(\brho_l)).$$

Recall the following definition from~\cite[Definition 4.8]{lot}.
\begin{definition}
Given a compact 3-manifold $Y$ with bordered Heegaard diagram $\mathcal{H}$, we say that a pair consisting of a Riemannian metric $g$ on $Y$ and a self-indexing Morse function $h: Y \to [0,3]$ is {\em compatible with $\mathcal{H}$} if
\begin{itemize}
    \item the boundary of $Y$ is geodesic,
    \item the gradient vector field $\nabla h|_{\partial Y}$ is tangent to $\partial Y$,
    \item $h$ has a unique index 0 and a unique index 3 critical point, both of which lie on $\partial Y$, and are the unique index 0 and 2 critical points of $h|_{\partial Y}$, respectively,
    \item the index 1 critical points of $h|_{\partial Y}$ are also index 1 critical points of $h$,
    \item $h|_{\partial Y}$, viewed as a Morse function on $F=\partial Y$, is compatible with the pointed matched circle $\mathcal{Z}$.
\end{itemize}
\end{definition}

Fix a compatible Morse function $h:Y \to [0,3]$, and consider the gradient vector field $\nabla h$ on $Y$. For any $\mathbf{x} \in \mathfrak{S}(\mathcal{H})$, the pair $(\mathbf{x},z)$ determines $g+1$ gradient trajectories $\{\gamma_0,\cdots,\gamma_g\}$, where $\gamma_0$ connects the index 0 and index 3 critical points passing through $z$, and $\gamma_i$ connects the index 1 and index 2 critical points passing through $x_i$. We define $\textrm{gr}(\mathbf{x}) \in S(\H)$ by modifying $\nabla h$ near tubular neighborhoods of the trajectories $\gamma_i$ as follows.

Let $N(\gamma_0)$ be a small neighborhood of $\gamma_0$ in $Y$ and let $D=\{(x,y)\in\R^2|x^2+y^2\le 1,x\ge 0\}$. Then $N(\gamma_0)$ is diffeomorphic to $D\times[0,\pi]/\sim$, where the equivalence relation is given by $((0,y),t)\sim((0,y),t')$ for every $t,t'$, and where $(D\times\{0\})\cup (D\times\{\pi\})/\sim$ is identified with $N(\gamma_0)\cap \partial Y$, see Figure~\ref{nbhd2}(a). Using the above identification, the vector field $\nabla h$ restricted to $D\times\{t\}$ is depicted in Figure~\ref{nbhd}(a). For each $t\in[0,\pi]$, we modify $\nabla h$ in $D\times\{t\}$ as shown in Figure~\ref{nbhd}(d). Since these modifications coincide on $D\cap \{y=0\}$, we get a nonvanishing vector field on $D\times[0,\pi]/\sim$. This is the restriction to the half-ball of the analogous modification used to define the grading on Heegaard Floer homology~\cite{gh}. For a formula describing this modification, see~\cite[\S2]{gh}.

We order the flow lines $\gamma_1,\dots,\gamma_g$ so that the index one critical points corresponding to $\gamma_1,\dots,\gamma_k$ lie on $\partial Y$. For each $i=1,\dots,k$, let $N(\gamma_i)$ be a small neighborhood of $\gamma_i$ in $Y$. Let $\tilde{B}$ be the intersection of the unit ball in $\R^3$ with $\{z\ge -1/2\}$. Then $N(\gamma_i)$ is diffeomorphic to $\tilde{B}$, see Figure~\ref{nbhd2}(b). Let $\tilde{D}=\{(x,y)\in\R^2|x^2+y^2\le 1,y\ge -1/2\}$. Each vertical cross-section of $\tilde{B}$ can be identified with $\tilde{D}$. The vector field $\nabla h$ restricted to $N(\gamma_i)$ can be viewed as an interpolation between $\nabla h$ restricted to two transverse vertical cross-sections, corresponding to the unstable manifold of the index one critical point and the stable manifold of the index two critical point. Figure~\ref{nbhd}(b,c) shows the restriction of $\nabla h$ to these two cross-sections. We modify $\nabla h$ on these cross-sections as in Figure~\ref{nbhd}(e,f). Again, this is very similar to the corresponding construction on Heegaard Floer homology. Namely, this is the restriction to $\{z\ge -1/2\}$ of the vector field defined in~\cite{gh}. The reader can find a formula describing this modification in~\cite[\S2]{gh}.
For each $i=k+1,\dots,g$, the corresponding index one critical point lies in the interior of $Y$. So do the same modification as in~\cite[\S2]{gh}.

We still have to eliminate the boundary index one critical points which do not belong to any $\gamma_i$. We do so by slightly perturbing $\nabla h$ in a neighborhood of each of these points so that it points to the interior of $Y$. Alternatively, we observe that $Y$ is diffeomorphic to the complement of the union of small neighborhoods of each of these points. So $\nabla h$ restricted to a tubular neighborhood of the boundary of this complement gives the desired modification of $\nabla h$, see Figure~\ref{nbhd2}(c). Let $v_{\xx}$ denote the vector field in $Y$ obtained by modifying $\nabla h$ as explained above. Then we define $\text{gr}(\xx)$ to be the relative homotopy class of $v_{\xx}$. We note that $\text{gr}(\xx)\in\Vect(Y,v_{o(\xx)})$.

\begin{figure}[h]
    \begin{overpic}[scale=.3]{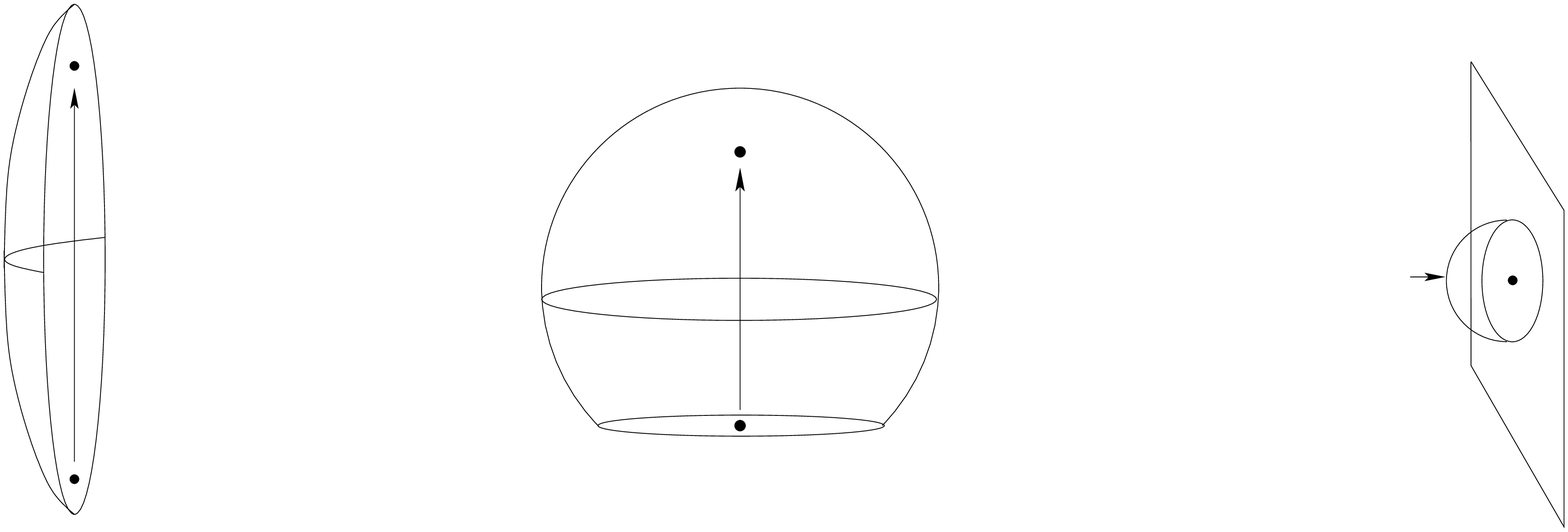}
    \put(4,-3){(a)}
    \put(45,-3){(b)}
    \put(93,-3){(c)}
    \end{overpic}
    \caption{}
    \label{nbhd2}
\end{figure}

\begin{figure}[h]
    \begin{overpic}[scale=.3]{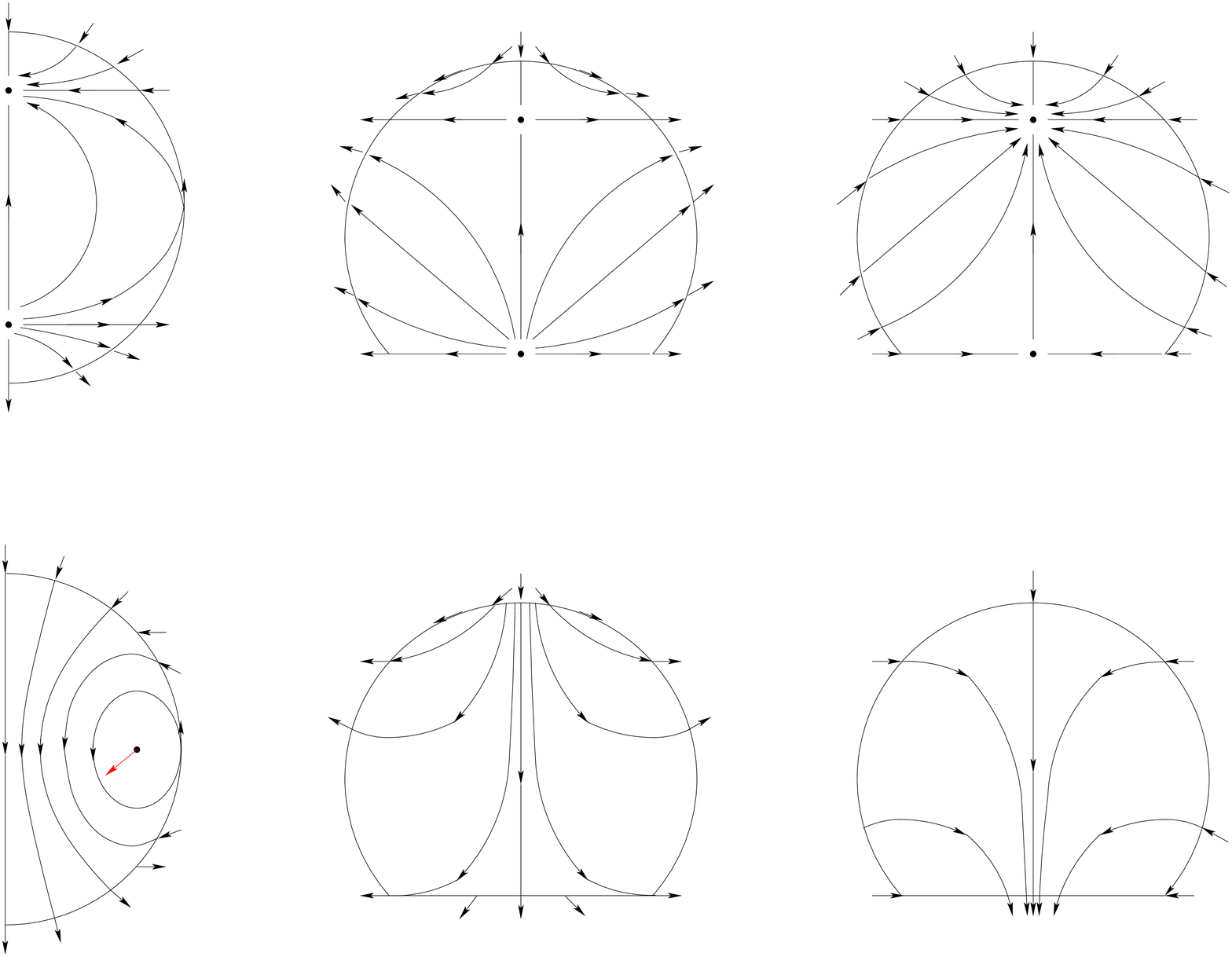}
    \put(4,41){(a)}
    \put(40,41){(b)}
    \put(82,41){(c)}
    \put(4,-3){(d)}
    \put(40,-3){(e)}
    \put(82,-3){(f)}
    \end{overpic}
    \vspace{2mm}
    \caption{Modifying $\nabla h$ to a nonvanishing vector field.}
    \label{nbhd}
\end{figure}

Following~\cite[Definition 4.14]{lot}, given generators $\mathbf{x},\mathbf{y} \in \mathfrak{S}(\mathcal{H})$, we consider the relative homology group
$$H_2(\Sigma\times[0,1]\times[0,1],((S_{\bm\alpha}\cup S_{\bm\beta} \cup S_{\partial})\times[0,1])\cup (G_{\xx}\times\{0\})\cup (G_{\yy}\times\{1\})),$$
where $S_{\bm\alpha}=\bm\alpha\times\{1\}$, $S_{\bm\beta}=\bm\beta\times\{0\}$, $S_{\partial}=(\partial\Sigma\setminus z)\times[0,1]$, $G_{\xx}=\xx\times[0,1]$ and $G_{\yy}=\yy\times[0,1]$. This group is usually denoted by $\pi_2(\xx,\yy)$, following the tradition from~\cite{osz}.

A homology class $B\in\pi_2(\xx,\yy)$ can be interpreted as a domain in $\Sigma$. As such, one defines $e(B)$ to be the Euler measure of this domain as follows. For each region in $\Sigma\setminus(\bm\alpha\cup\bm\beta)$, we define its Euler measure to equal its Euler characteristic $\chi(B)$ plus one quarter of the number of concave corners minus the number of convex corners. We can extend this linearly to domains in $\Sigma$. One also defines $n_{\xx}(B)$ to be one quarter of the number of components of $\Sigma\setminus(\bm\alpha\cup\bm\beta)$ in $B$ adjacent to $\xx$, counted with multiplicity. One defines $n_{\yy}$ similarly.
For $B\in\pi_2(\xx,\yy)$, one defines $\partial^{\partial} B$ to be the piece of the boundary of  $B$ contained in $\partial\Sigma$. We think of $\partial^{\partial} B$ as a class in $H_1(Z\setminus \{z\}, \a)$.
Let $\vec{\bm\rho}=(\brho_1,\dots,\brho_l)$ be an $l$-tuple of sets of Reeb chords. Let $L(\bm\rho_i,\bm\rho_j)$ denote the sum of all terms of the form $L(\rho,\sigma)$ for $\rho\in\bm\rho_i$ and $\sigma\in\bm\rho_j$. Recall that 
\begin{equation}\label{eq:iota}
\iota(\vec{\bm\rho})=\sum_{i=1}^l \iota(\bm\rho_i)+\sum_{i<j} L(\bm\rho_i,\bm\rho_j).\end{equation}
\begin{rmk}\label{rmk:iota}
 We recall that $\iota(\vec{\bm\rho})$ is the Maslov component of the product $\gr'(\brho_1)\cdots\gr'(\brho_l)$, where $\gr'$ is the noncanonical grading as in~\S\ref{sec:noncom}.
\end{rmk}
One can also define $[\vec{\bm\rho}]=[\brho_1]+\dots+[\brho_l]\in H_1(Z\setminus z,\a)$. Now recall the definition of ind$(B,\vec{\bm\rho})$ for $B\in\pi(\xx,\yy)$ and $\vec{\bm\rho}$ satisfying $\partial^{\partial} B=[\vec{\bm\rho}]$:
\begin{equation}
\text{ind}(B,\vec{\bm\rho})=e(B)+n_{\xx}(B)+n_{\yy}(B)+\iota(\vec{\bm\rho})+l.\label{eq:defind}
\end{equation}

Given $\mathbf{x},\mathbf{y} \in \mathfrak{S}(\mathcal{H})$ such that $\pi_2(\xx,\yy)$ is nonempty\footnote{This is equivalent to $\xx$ and $\yy$ being in the same Spin$^c$ structure.}, we now compare $\textrm{gr}(\mathbf{x})$ and $\textrm{gr}(\mathbf{y})$. The main result of this section is the following proposition.

\begin{prop} \label{index}
Let $\mathbf{x},\mathbf{y} \in \mathfrak{S}(\mathcal{H})$, $B\in\pi_2(\xx,\yy)$ and $\vec{\brho}=(\brho_1,\dots,\brho_l)$ such that $\partial^{\partial} B=[\vec{\bm\rho}]$. Assume that $(\xx,\vec{\bm\rho})$ satisfies~\eqref{eq:cfa} and let $\s_1,\dots,\s_l\subset[2k]$ such that
$$\xx\otimes_{\mathcal{I}}a(\brho_1)\otimes_{\mathcal{I}}\cdots\otimes_{\mathcal{I}}a(\brho_l)=\xx\otimes_{\mathcal{I}}I(\s_1)a(\brho_1)\otimes_{\mathcal{I}}\dots\otimes_{\mathcal{I}}I(\s_l)a(\brho_l).$$ Then \begin{equation}\label{grading:eq}\text{\em gr}(\mathbf{x}) \cdot \text{\em gr}(I(\s_1)a(\brho_1))\cdots\text{\em gr}(I(\s_l)a(\brho_l))=\lambda^{\text{\em ind}(B,\vec{\bm\rho})-l} \cdot \text{\em gr}(\mathbf{y}).\end{equation}
\end{prop}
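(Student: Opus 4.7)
The plan is to derive~\eqref{grading:eq} from Proposition~\ref{htpydiff} by presenting both sides as nonvanishing vector fields on $Y$ and computing the framing of their Pontryagin link. Fix a collar $N(\partial Y)\cong F\times[0,1]$ of $\partial Y$ and let $\xi:=v_{\xx}\cdot v_{(\s_1,\brho_1)}\cdots v_{(\s_l,\brho_l)}$ denote the nonvanishing vector field obtained by stacking the Reeb-chord models of \S\ref{construction:G} on top of $v_{\xx}$ inside this collar. A short bookkeeping from the idempotent compatibility in~\eqref{eq:cfa} and from $\t_i=M(\brho_i^+)\cup(\s_i\setminus M(\brho_i^-))$ shows that $\xi$ restricts to $v_{o(\yy)}$ on $\partial Y$, so $\xi$ and $\eta:=v_{\yy}$ both lie in $\Vect(Y,v_{o(\yy)})$. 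Since the existence of $B\in\pi_2(\xx,\yy)$ places $\xx$ and $\yy$ in the same relative $\spinc$ structure, the same holds for $\xi$ and $\eta$, and Proposition~\ref{htpydiff}(b) yields $[\xi]=\lambda^{n_{\xi,\eta}}\cdot[\eta]$. It therefore suffices to exhibit a Seifert surface $\Sigma_B\subset Y$ for $L_{\xi,\eta}$ and prove that $n_{\xi,\eta}=\text{ind}(B,\vec{\brho})-l$.

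The Seifert surface should be built from the domain $B\subset\Sigma$ itself. The Heegaard surface $\Sigma$ embeds in $Y$ as $h^{-1}(3/2)$ for the compatible Morse function $h$, and $B$ is a $2$-chain in $\Sigma$. One thickens $B$ slightly into $Y$ so that its boundary on $\bm\alpha$ and $\bm\beta$ runs along the cores of the corresponding handles---precisely where the modifications of $\nabla h$ producing $v_{\xx}$ and $v_{\yy}$ are concentrated---and so that its boundary on $\partial\Sigma\setminus z$ follows the Reeb-chord arcs $\tilde\gamma_\rho$ of \S\ref{construction:G}. After a generic perturbation, $\Sigma_B$ becomes a compact embedded surface whose boundary is exactly $L_{\xi,\eta}$; a few trivial arcs may need to be appended to account for the canceling-pair constructions of Steps~2 and~3 of \S\ref{construction:G}, but these will contribute zero to the framing.

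The framing $n_{\xi,\eta}$ then decomposes as an additive sum of local contributions, because $\Sigma_B$ retracts onto a graph and $TY|_{\Sigma_B}$ admits a trivialization extending the ones already used in the definitions of $v_{\xx}$, $v_{\yy}$, and the factors $v_{(\s_i,\brho_i)}$. The bulk of $B$ contributes the Euler measure $e(B)$ by the Euler-characteristic count of~\cite{gh}; the corners of $B$ adjacent to $\xx$ and $\yy$ contribute $n_{\xx}(B)$ and $n_{\yy}(B)$ by the quarter-turn count of~\cite{gh}; and the part of $\partial\Sigma_B$ lying in the collar contributes $\iota(\vec{\brho})$ by Proposition~\ref{prop:noncan} together with~\eqref{eq:iota}, since each stacked factor $v_{(\s_i,\brho_i)}$ contributes its intrinsic Maslov index $\iota(\brho_i)$ and consecutive factors link in the collar with total contribution $\sum_{i<j}L(\brho_i,\brho_j)$. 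Summing these contributions and invoking~\eqref{eq:defind} gives
$$n_{\xi,\eta}=e(B)+n_{\xx}(B)+n_{\yy}(B)+\iota(\vec{\brho})=\text{ind}(B,\vec{\brho})-l,$$
as required.

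The main obstacle will be the local framing analysis at the interface between $Y\setminus N(\partial Y)$ and the collar, where the interior vector field $v_{\xx}$ is grafted onto the stack of Reeb-chord vector fields: one has to verify that the trivialization of $TY|_{\Sigma_B}$ can be chosen to match across this interface without introducing extra twists beyond those already accounted for by $\iota(\vec{\brho})$, which is where the comparison with the noncommutative grading in Proposition~\ref{prop:noncan} enters in an essential way. The Reeb chords with $M(\rho^-)=M(\rho^+)$ handled in Step~2 of \S\ref{construction:G} also require care, because of their auxiliary canceling pairs of singularities; one should check that the extra arcs they contribute to $L_{\xi,\eta}$ bound small disks in $\Sigma_B$ with the trivial framing, so that they do not shift $n_{\xi,\eta}$. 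Once these local pictures are pinned down, the framing count reproduces $\text{ind}(B,\vec{\brho})-l$ essentially on the nose.
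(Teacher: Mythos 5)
Your overall framework---realize both sides as nonvanishing vector fields on $Y$ agreeing along $\partial Y$ and compute the framing $n_{\xi,\eta}$ of the Pontryagin link of their difference---is the right one, and your target identity $n_{\xi,\eta}=e(B)+n_{\xx}(B)+n_{\yy}(B)+\iota(\vec{\brho})$ is exactly what the paper proves (its~\eqref{eq:nxx}). But your route differs from the paper's, and the decisive step is deferred rather than carried out. The paper does not attempt a direct local decomposition of the framing inside $Y$: it caps $Y$ off to a closed manifold $Y'$ by completing the $\alpha$-arcs and adding $\beta$-circles parallel to $Z\setminus N(z)$, promotes $(\xx,\vec{\brho},\yy)$ to closed generators $\xx',\yy'$ with a domain $B'$, imports $n_{\xx',\yy'}=\text{ind}(B')$ from~\cite{gh} via~\eqref{eq:grcf}, and then only computes the \emph{difference} of framings between the bordered and closed pictures ($-q$ in the disjoint-chord case of Step~4, and the surgery corrections of~\eqref{eq:6b} and~\eqref{eq:diff'} in general). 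Your plan instead asks for an additive splitting of $n_{\xi,\eta}$ into an interior contribution $e(B)+n_{\xx}(B)+n_{\yy}(B)$ and a collar contribution $\iota(\vec{\brho})$.

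The gap is in the collar contribution. You invoke Proposition~\ref{prop:noncan} and~\eqref{eq:iota}, but that proposition computes $\mathcal{F}_m$, a signed intersection count of the projections of $L_v$ and its pushoff $K_v$ taken with respect to the specific trivialization coming from the immersion $\bar{\mathfrak{N}}\looparrowright\R^2$, entirely inside $F\times[0,1]$. To convert this into a contribution to the Seifert framing $n_{\xi,\eta}$ of a link that crosses between the interior of $Y$ and the collar, you must (i) exhibit an actual Seifert surface whose collar part is compatible with that trivialization and whose interior part is compatible with the trivialization of~\cite{gh}, (ii) check the two trivializations match across the interface with no extra twisting, and (iii) rule out cross-terms between the collar arcs and the interior flow lines. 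You flag (i)--(ii) as ``the main obstacle,'' but that obstacle \emph{is} the content of the proposition: it is precisely what the paper's Step~4 does with the auxiliary vector field $V$, the trivializations $\tau_{S_1},\tau_{S_2}$ and the comparison links $K_1,K_2$, and what Step~5 does for general $\vec{\brho}$, where the collar arcs are not simply the $\tilde{\gamma}_{\rho_i}$ but can double back (Figure~\ref{fig:L}) and nested, interleaved and abutting pairs require the $0$-surgery bookkeeping that produces the corrections matching~\eqref{eq:6b}. Your description of $\Sigma_B$ as a thickening of $B$ is also too coarse---the Seifert surface must include the stable and unstable manifolds of the flow lines through $\xx$ and $\yy$, as in the chain $C'$ of Step~3. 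Until this local framing analysis is done, the identity $n_{\xi,\eta}=\text{ind}(B,\vec{\brho})-l$ is asserted, not derived.
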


\begin{proof}
We divide the proof in five steps.

\textsc{Step 1}: We start by making a simplifying assumption.

For each $\brho_i$, we write $\brho_i=\{\rho_{i,1},\dots,\rho_{i,|\brho_i|}\}$, where the Reeb chords as ordered as in Section~\ref{construction:G}. Now let
$\vec{\rho}=(\rho_{1,1},\dots,\rho_{1,|\brho_1|},\dots,\rho_{l,1},\dots,\rho_{l,|\brho_l|})$ and let $q=|\brho_1|+\dots+|\brho_l|$. It follows from~\eqref{eq:i} and~\eqref{eq:iota} that $\iota(\vec{\brho})=\iota(\vec{\rho})$. Write $\vec{\rho}=(\rho_1,\dots,\rho_q)$. We now assume that the Reeb chords $\rho_i$ are pairwise disjoint and that
\begin{equation}
 \Big\vert([2k]\setminus o(\yy))\cap\bigcup_{i=1}^q \{M(\rho_i^-)\}\Big\vert=\Big\vert([2k]\setminus o(\xx))\cap\bigcup_{i=1}^q \{M(\rho_i^+)\}\Big\vert=q.\label{eq:cond}
\end{equation}
This condition means that:
\begin{itemize}
 \item For every $p\in[2k]$, there is at most one $i$ such that $M(\rho_i^-)=p$, and there is at most one $j$ such that $M(\rho_j^+)=p$.
 \item For every $p\in o(\xx)$, there is no $i$ such that $M(\rho_i^+)=p$.
 \item For every $p\in o(\yy)$, there is no $i$ such that $M(\rho_i^-)=p$.
\end{itemize}
We will first prove the proposition under this assumption.

Since the Reeb chords $\rho_i$ are pairwise disjoint, $\iota(\vec{\rho})=-q/2$.
It follows from~\eqref{eq:defind} that
\begin{equation}
 \text{ind}(B,\vec{\brho})-l=e(B)+n_{\xx}(B)+n_{\yy}(B)-\frac{q}{2}.\label{eq:B2}
\end{equation}

We also note that~\eqref{eq:cond} implies that $a(\brho_i)=a(\rho_{i,1})\cdots a(\rho_{i,|\brho_i|})$, for all $i$. It follows that
$$\gr(I(\s_1)a(\brho_1))\cdots\gr(I(\s_l)a(\brho_l))=\gr(I(s_1) a(\rho_1))\cdots\gr(I(s_q) a(\rho_q)),$$
for some $s_1,\dots,s_q\subset [2k]$.

\textsc{Step 2}: We now use $(\xx,\vec{\rho})$ and $\yy$ to construct Heegaard Floer homology generators of a closed three-manifold related to $Y$.

Let $\Sigma'$ be a closed surface obtained by gluing a compact surface of genus $k$ with boundary $-Z$ to $\Sigma$ along the boundary. We construct a Heegaard diagram $(\Sigma',\bm\alpha',\bm\beta',z)$ as follows. For each arc $\alpha_i^a$, we glue an arc on $\Sigma'\setminus\Sigma$ to obtain a closed circle on $\Sigma'$, which we denote by $\alpha_i'$. We can always choose the completion of the $\alpha$-arcs such that $\bm\alpha'=\{\alpha_1^c,\dots,\alpha_{g-k}^c,\alpha_1',\dots,\alpha_{2k}'\}$ is a set of pairwise disjoint curves which are linearly independent in $H_1(\Sigma')$.
Recall that $Z\setminus N(z)\subset \partial \Sigma$ is a line segment containing all Reeb chords. Now consider $k$ translates of $Z\setminus N(z)$ in a collar neighborhood of $\partial \Sigma$ on $\Sigma'\setminus\Sigma$ ordered by their distance to $\bdry\Sigma$. For each $i=1,\dots,k$, we define a circle $\beta_i'$ on $\Sigma'\setminus\Sigma$ containing the $i$-th translate of $Z\setminus N(z)$, such that these circles are pairwise disjoint and linearly independent in homology. So we let $\bm\beta'=\{\beta_1,\dots,\beta_g,\beta_1',\dots,\beta_k'\}$. Therefore we obtain a Heegaard diagram $(\Sigma',\bm\alpha',\bm\beta',z)$, which gives rise to a closed three-manifold containing $Y$, denoted by $Y'$. We note that this diagram is similar but not identical to the diagram $\mathsf{AZ}(\ZZ)$, see~\cite{lot_mor}.

The domain $B \in \pi_2(\mathbf{x},\mathbf{y})$ naturally extends over $\Sigma'$, as follows. Note that, by~\eqref{eq:cond}, $q\le k$. Now each Reeb chord $\rho_i$ can be translated to $\beta_i'$ giving rise to a segment, whose endpoints are on the $\alpha$-circles corresponding to the endpoints of $\rho_i$. So each $\rho_i$ gives rise to two intersection points on $\beta_i'$. We add new intersection points to $\xx$ and $\yy$, as follows. For each $\rho_i^-$, the corresponding intersection point on $\beta_i'$ is added to $\mathbf{y}$ and, for each $\rho_i^+$, it is added to $\mathbf{x}$. If $q<k$, for each $i>q$, we choose a fixed intersection point on $\beta_i'$ to add to both $\mathbf{x}$ and $\mathbf{y}$. This construction gives rise to elements $\mathbf{x}'$ and $\mathbf{y}'$ of $\TT_{\bm\alpha'} \cap \TT_{\bm\beta'}$. We obtain a domain $B'$ on $\Sigma'$ by taking the union of $B$ with a domain in $\Sigma'\setminus\Sigma$ bounded by the Reeb chords $\rho_i$, its translates and the corresponding $\alpha$-circles. We observe that $B'\in\pi_2(\xx',\yy')$.

In~\cite{gh}, we defined an absolute grading function $\widetilde{\textrm{gr}}: \mathbb{T}_{\alpha'} \cap \mathbb{T}_{\beta'} \to \Vect(Y').$ This function is such that \begin{equation}\label{eq:grcf}\widetilde{\textrm{gr}}(\mathbf{x}')=\lambda^{\textrm{ind}(B')}\cdot \widetilde{\textrm{gr}}(\mathbf{y}'),\end{equation}
where $\text{ind}(B')$ is given by Lipshitz's index formula~\cite{lip}:\begin{equation}\label{combind}\textrm{ind}(B') = e(B')+n_{\xx'}(B')+n_{\yy'}(B').\end{equation}

We observe that $e(B)=e(B').$ The points in $\xx'$ and $\yy'$ are either elements of $\xx$ and $\yy$ or new corners on $\Sigma'\setminus\Sigma$, unless they are belong to $\beta_i'$ for $i>l$, in which case they do not contribute to $n_{\xx'}(B')$ and $n_{\yy'}(B')$. We have $2q$ new corners, giving a contribution of $q/2$ to $n_{\xx'}(B')+n_{\yy'}(B')$. Hence, it follows from~\eqref{eq:B2} and~\eqref{combind} that
\begin{equation}
 \text{ind}(B,\vec{\brho})-l=\textrm{ind}(B')-q.\label{eq:B'}
\end{equation}

\textsc{Step 3}: We will now relate our problem to the relative grading between $\xx'$ and $\yy'$.

We can decompose $Y'$ as $Y'=Y\cup_F (F\times[0,1])\cup_F \hat{Y}$, where $F\times[0,1]$ is the intersection of a neighborhood of $\partial Y$ with $Y'\setminus Y$. We can assume without loss of generality that $B'\subset Y\cup (F\times[0,1])$ and that the unstable manifolds of all the index one critical points are $[0,1]$-invariant in $F\times[0,1]$. Following our construction of the gradings, let $v_{\xx}$, $v_{\yy}$ be the vector fields whose relative homotopy classes are $\gr(\xx)$ and $\gr(\yy)$, and let $v_{(s_1,\rho_1)},\dots,v_{(s_q,\rho_q)}$ be the vector fields defined in \S\ref{sec:alg} such that $[v_{(s_i,\rho_i)}]=\gr(I(s_i)a(\rho_i))$.
Let $\t=I_A(\yy)$ and let $\mathbb{I}_{\t}$ denote the $[0,1]$-invariant vector field on $F\times[0,1]$ whose restriction to $F\times\{t\}$ equals $v_{\t}$. Then the action of $[\mathbb{I}_{\t}]$ on $\Vect(Y,v_{\t})$ is trivial. So $[v_{\yy}\cdot\mathbb{I}_{\t}]=\gr(\yy)$. 
Therefore, in order to prove~\eqref{grading:eq}, it is enough to show that \begin{equation}\label{eq:vind}[v_{\xx}\cdot (v_{(s_1,\rho_1)}\cdots v_{(s_q,\rho_q)})]=\lambda^{\textrm{ind}(B,\vec{\brho})-l}\cdot [v_{\yy}\cdot\mathbb{I}_{\t}].\end{equation}

Since $v_{(\s,\vec{\rho})}$ and $\mathbb{I}_{\t}$ coincide on $F\times\{1\}$, we can extend $v_{\xx}\cdot(v_{(s_1,\rho_1)}\cdots v_{(s_q,\rho_q)})$ and $v_{\yy}\cdot \mathbb{I}_{\t}$ to $Y'$ so that they coincide in $\hat{Y}$. Let $v_{\xx,\vec{\rho}}$ and $v_{\yy,\t}$ be the vector fields obtained by this extension from $v_{\xx}\cdot(v_{(s_1,\rho_1)}\cdots v_{(s_q,\rho_q)})$ and $v_{\yy}\cdot \mathbb{I}_{\t}$, respectively.
We apply Proposition~\ref{htpydiff}, obtaining a link denoted by $L_{(\xx,\vec{\rho}),\yy}$ defined as
$$L_{(\xx,\vec{\rho}),\yy}:=\{y\in Y'|v_{\xx,\vec{\rho}}(y)=-v_{\yy,\t}(y)\}.$$
Since $v_{\xx,\vec{\rho}}$ and $v_{\yy,\t}$ coincide in $\hat{Y}$, the link $L_{(\xx,\vec{\rho}),\yy}$ is contained in $Y\cup(F\times[0,1))$ and it is independent of the extension of the vector fields to $\hat{Y}$.

Let $v_{\xx'}$ and $v_{\yy'}$ be the vector fields on $Y'$ as contructed in~\cite[\S2]{gh} whose homotopy classes are $\widetilde{\textrm{gr}}(\mathbf{x}')$ and $\widetilde{\textrm{gr}}(\mathbf{y}')$, respectively. We define $L_{\xx',\yy'}$ to be the link in $Y'$ given by
$$L_{\xx',\yy'}=\{y\in Y'|v_{\xx'}(y)=-v_{\yy'}(y)\}.$$
We note that $v_{\xx'}|_Y=v_{\xx}$ and $v_{\yy'}|_Y=v_{\yy}$. So the restrictions of $L_{(\xx,\vec{\rho}),\yy}$ and $L_{\xx',\yy'}$ to $Y$ coincide. We observe that this is the union of the flow lines corresponding to all points in $\xx$ and $\yy$, up to a small isotopy in neighborhoods of the critical points. 

We will now show that $L_{(\xx,\vec{\rho}),\yy}$ and $L_{\xx',\yy'}$ are both nullhomologous and isotopic to each other. We first look at $L_{(\xx,\vec{\rho}),\yy}\cap (F\times[0,1])$. For each $i$, we can assume that $v_{(s_i,\rho_i)}$ is defined in $F\times[\frac{i-1}{q},\frac{i}{q}]$. We will now compare $v_{(s_i,\rho_i)}$ with $\mathbb{I}_{\t}$. Using the description of $v_{(s_i,\rho_i)}$ illustrated in Figure~\ref{chordgr}, $v_{(s_i,\rho_i)}$ is a vector field which is $t$-invariant outside $N(\hat{\rho}_i)\times[\frac{i-1}{q},\frac{i}{q}]$, where $N(\hat{\rho}_i)$ is a neighborhood of the union of $\rho_i$ with the Morse trajectories connecting its ends to the corresponding critical points, as in \S\ref{sec:alg}. It follows from~\eqref{eq:cond} that $M(\rho_i^-)\not\in\t$. Now either $M(\rho_i^+)\not\in\t$ or $M(\rho_i^+)\in\t$. In the first case, $L_{(\xx,\vec{\rho}),\yy}\cap(F\times\{\frac{i-1}{q}\})=\{M(\rho_i^-)\}\times\{\frac{i-1}{q}\}$ and we obtain an arc that is always transverse to $F\times\{t\}$ and follows the point labeled with ``$+$'' as it travels from one critical point to the other. In the second case, $L_{(\xx,\vec{\rho}),\yy}\cap(F\times\{\frac{i-1}{q}\})=\{M(\rho_i^-), M(\rho_i^+)\}\times\{\frac{i-1}{q}\}$ and we obtain an arc which follows the points labeled with ``$+$'' and ``$-$'' until the middle of the second bifurcation, where the two parts of the arc connect. See Figure~\ref{fig:L}(a),(b) for an illustration of both cases. So $L_{(\xx,\vec{\rho}),\yy}\cap (F\times[0,1])$ is the union of these arcs for $i=1,\dots,q$. Note also that $L_{(\xx,\vec{\rho}),\yy}$ does not intersect $F\times\{1\}$.

Now we look at $L_{\xx',\yy'}\setminus Y$. We observe that $L_{\xx',\yy'}\setminus Y$ is, up to a slight perturbation, the union of the flow lines corresponding to the intersection points in $\xx'$ and $\yy'$ in $\Sigma'\setminus \Sigma$, except for the points on $\beta_i'$ for $i>q$. See Figure~\ref{fig:L}(c) for an example of $L_{\xx',\yy'}\setminus Y$. For each Reeb chord $\rho_i$, we can isotope the corresponding arc in $L_{(\xx,\vec{\rho}),\yy}\cap(F\times[\frac{i-1}{q},\frac{i}{q}])$ to have endpoints near $\partial Y$. We can also isotope $L_{\xx',\yy'}\setminus Y$ along the stable manifold of the index two critical point corresponding to $\beta_i'$ so that it is contained in $F\times[0,1]$. Because we chose $B'$ acording to the order of the Reeb chords in $\vec{\rho}$, it follows that we can perform a relative isotopy in $Y'\setminus Y$ so that $L_{(\xx,\vec{\rho}),\yy}\cap(F\times[0,1])$ is mapped to $L_{\xx',\yy'}\setminus Y$.

\begin{figure}
 \centering
\begin{minipage}[b]{0.4\textwidth}
\def\svgwidth{\textwidth}
 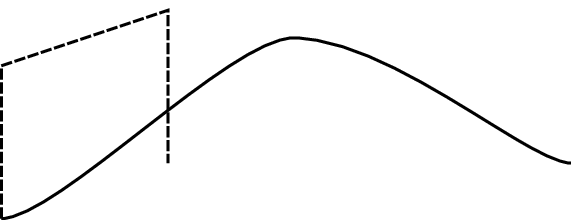
\subcaption{}
\end{minipage}\qquad
\begin{minipage}[b]{0.23\textwidth}
\def\svgwidth{\textwidth}
 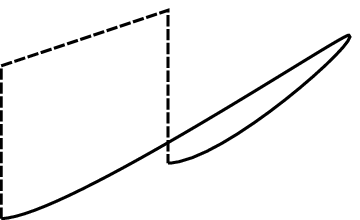
\subcaption{}                      
\end{minipage}\qquad
\begin{minipage}[b]{0.23\textwidth}
\def\svgwidth{\textwidth}
 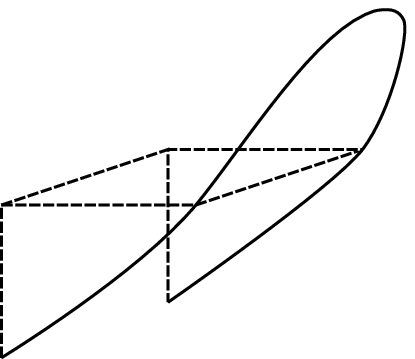
\subcaption{}
\end{minipage}
\caption{The two cases for an arc in $L_{(\xx,{\protect\vec{\rho}}),\yy}$ and an arc in $L_{\xx',\yy'}$.}
\label{fig:L}
\end{figure}

Let $C'$ be a 2-chain in $Y'$ obtained by taking the union of $B'$ with the stable and unstable submanifolds corresponding to all segments contained in $\partial B'$. Then the boundary of $C'$ is the union of the flow lines corresponding to $\xx'$ and $\yy'$, where the orientations on the flow lines corresponding to $\xx'$ and $\yy'$ are opposite to each other, so that the union of these flow lines is a closed curve. In particular, up to a small isotopy $\partial C'=L_{\xx',\yy'}$. So $L_{\xx',\yy'}$, and consequently $L_{(\xx,\vec{\rho}),\yy}$, are nullhomologous.

Let $n_{(\xx,\vec{\rho}),\yy}$ and $n_{\xx',\yy'}$ denote the framings on $L_{(\xx,\vec{\rho}),\yy}$ and $L_{\xx',\yy'}$ as in \S\ref{sec:htpy}. Our goal is to compute $n_{(\xx,\vec{\rho}),\yy}$. It follows from Proposition~\ref{htpydiff} and~\eqref{eq:grcf} that $n_{\xx',\yy'}=\textrm{ind}(B')$. So it is enough to compute $n_{(\xx,\vec{\rho}),\yy}-n_{\xx',\yy'}$.

We claim that 
\begin{equation}
 n_{(\xx,\vec{\rho}),\yy}-n_{\xx',\yy'}=-q.\label{eq:diff}
\end{equation}

\textsc{Step 4}: We now prove the proposition under our assumption.

We observe that $C'\cap \partial Y$ is the union of the curves $\hat{\rho}_i$ defined in \S\ref{sec:alg}. We can now choose Seifert surfaces $S_1$ and $S_2$ for $L_{(\xx,\vec{\rho}),\yy}$ and $L_{\xx',\yy'}$, respectively such that $S_1\cap Y=S_2\cap Y$, which both coincide with $C'\cap \partial Y$ on $\partial Y$. We can choose $S_2$ so that $S_2\setminus Y$ is a slight pertubation of $C'\setminus Y$ in $Y'\setminus Y$. We can assume, without loss of generality, that $S_1\cap (F\times[0,1])\subset (\cup_i N(\hat{\rho}_i))\times[0,1]$. Note also that an isotopy from $L_{(\xx,\vec{\rho}),\yy}$ to $L_{\xx',\yy'}$ induces an isotopy from $S_1$ to $S_2$. See Figure~\ref{fig:S} for an example of $S_1$ and $S_2$ in $Y'\setminus Y$.

\begin{figure}
 \centering
\begin{minipage}[b]{0.27\textwidth}
\def\svgwidth{\textwidth}
 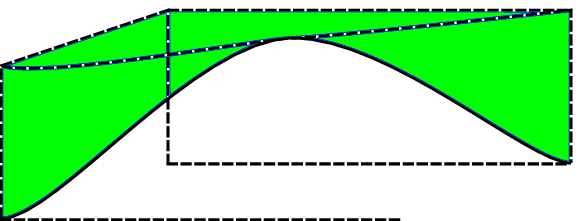
\end{minipage}\qquad
\begin{minipage}[b]{0.32\textwidth}
\def\svgwidth{\textwidth}
 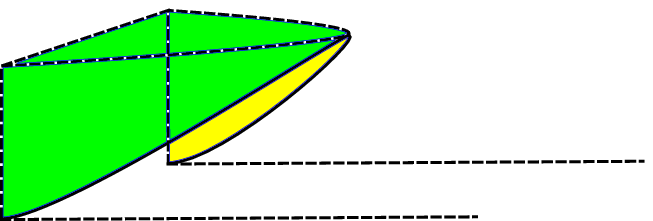
\end{minipage}\qquad
\begin{minipage}[b]{0.2\textwidth}
\def\svgwidth{\textwidth}
 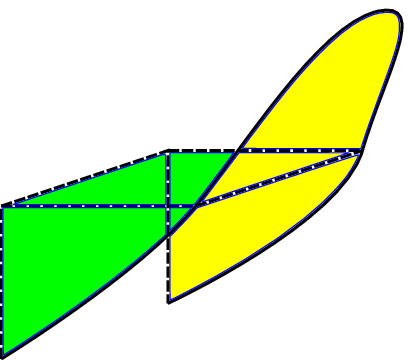
\end{minipage}
\caption{The surface $S_1$ in both cases and the surface $S_2$.}
\label{fig:S}
\end{figure}

In order to compute $n_{(\xx,\vec{\rho}),\yy}$ and $n_{\xx',\yy'}$, we first need to fix trivializations of $v_{\yy,\t}^\perp$ and $v_{\yy'}^\perp$ on neighborhoods of $S_1$ and $S_2$, respectively. We denote these neighborhoods  $N(S_1)$ and $N(S_2)$. Recall that $v_{\yy,\t}|_Y=v_{\yy'}|_Y$. We now fix a trivialization of $v_{\t}^{\perp}$ on $\cup_i N(\hat{\rho}_i)$ as follows. First recall that the trivialization $TF|_{N(\hat{\rho}_i)}\cong \R^2$ from the embedding in Figure~\ref{gradingbottom} and denote by $x$ and $y$ the coordinates on $\R^2$. We also write $\vec{\rho}^{\pm}=\{\rho_1^{\pm},\dots,\rho_l^{\pm}\}$. For each critical point $p\in M(\vec{\rho}^-\cup\vec{\rho}^+)$, we define a vector field $X_p$ in a neighborhood of $p$ as follows. If $p=M(\rho_i^-)$ for some $i$, we let $X_p=-\partial/\partial x$, where we identify $TF|_{N(\hat{\rho}_i)}\cong \R^2$. If $p=M(\rho_i^+)$ for some $i$, we let $X_p=\partial/\partial x$, where we identify $TF|_{N(\hat{\rho}_i)}\cong \R^2$. Note that if $p=M(\rho_i^+)=M(\rho_j^-)$ for $i<j$, then $X_p$ is well-defined, since we are assuming that $\rho_j^-\neq\rho_i^+$.

Let $t$ denote the $[0,1]$-coordinate in $F\times[0,1]$. As usual, we can think of $\partial/\partial t$ as a vector field on $F$. Let $V$ denote the vector field on $\cup_i N(\hat{\rho}_i)$ defined by $$V=\sum_{p\in M(\vec{\rho}^-\cup\vec{\rho}^+)}\phi_p\cdot \Bigg(X_p-\frac{\partial}{\partial t}\Bigg) +\frac{\partial}{\partial t},$$ where $\phi_p$ is a bump function such that $\phi_p=1$ at $p$ and $\phi_p=0$ outside a small neighborhood of $p$. We note that the vector field $v_{\t}$ is never tangent to $V$ along $N(\hat{\rho}_i)$ for every $i$. Hence the orthogonal projection of $V$ to $v_{\t}^{\perp}$ is a nonvanishing section of $v_{\t}^{\perp}|_{\cup_iN(\hat{\rho}_i)}$, giving rise to a trivialization of $v_{\t}^{\perp}|_{\cup_iN(\hat{\rho}_i)}$ for each $i$, where this section is identified with $(1,0,0)$. We extend this trivialization arbitrarily to a trivialization $\tau_Y$ of $v_{\yy,\t}^{\perp}|_{N(S_1\cap Y)}$, where $N(S_1\cap Y)$ is a neighborhood of $S_1\cap Y$ in $Y$. Since $S_1\cap (F\times[0,1])\subset(\cup_iN(\hat{\rho}_i))\times[0,1]$ and since $v_{\yy,\t}$ is $t$-invariant, we can extend $\tau_Y$ to a trivialization $\tau_{S_1}$ of $v_{\yy,\t}^{\perp}|_{N(S_1)}$, which is $t$-invariant on $N(S_1)\cap (F\times[0,1])$.

We now extend $\tau_F$ to a trivialization of $v_{\yy'}^\perp|_{N(S_2)}$ as follows. We will first extend $V$ to a vector field on a neighborhood of $S_2\setminus Y$, denoted by $N(S_2\setminus Y)$. Let $\epsilon>0$ such that $L_{\xx',\yy'}\cap (F\times\{\epsilon\})\neq\emptyset$. For $x\in L_{\xx',\yy'}\cap (F\times[0,\epsilon])$, we set $V(x)=V(\pi(x))$, where $\pi$ is the projection onto $F$. We can choose $\epsilon$ so that for $x\in L_{\xx',\yy'}\cap (F\times\{\epsilon\})$, we have $V(x)=\partial/\partial t$. Now, for $x\in L_{\xx',\yy'}\setminus (Y\cup(F\times[0,\epsilon]))$,  we set $V(x)$ to be always perpendicular to the stable manifold of the corresponding index two critical point. It follows from our construction of $V$  that we can extend $V$ to a vector field in $N(S_2\setminus Y)$ that is never parallel to $v_{\yy'}$. The orthogonal projection of $V$ onto $v_{\yy'}^\perp|_{N(S_2\setminus F)}$ induces an extension of $\tau_F$ to a trivialization of $v_{\yy'}^\perp|_{N(S_2)}$, denoted by $\tau_{S_2}$.

We are now ready to compute the framings on $L_{(\xx,\vec{\rho}),\yy}$ and $L_{\xx',\yy'}$. Using the trivializations $\tau_{S_1}$ and $\tau_{S_2}$ respectively, we can see the vector fields $v_{\xx,\vec{\rho}}$ and $v_{\xx'}$ as maps $v_{\xx,\vec{\rho}}:N(S_1)\to S^2$ and $v_{\xx'}:N(S_2)\to S^2$. We consider the links $K_1=v_{\xx,\vec{\rho}}^{-1}(\delta,0,-\sqrt{1-\delta^2})$ and $K_2=v_{\xx'}^{-1}(\delta,0,-\sqrt{1-\delta^2})$. It follows from our contruction that these links coincide in $Y$. Figure~\ref{fig:K} shows a picture of the pieces of $K_1$ and $K_2$ corresponding to a Reeb chord $\rho_i$. We observe that $K_1\setminus Y$ intersects $S_1\setminus Y$ once and $K_2\setminus Y$ does not intersect $S_2\setminus Y$. The intersection of $K_1\setminus Y$ with $S_1\setminus Y$ is negative by our sign convention. Therefore, we have shown~\eqref{eq:diff}. Combining~\eqref{eq:B'} and~\eqref{eq:diff}, we conclude that~\eqref{eq:vind} holds.

\begin{figure}
 \centering
\begin{minipage}[b]{0.3\textwidth}
\def\svgwidth{\textwidth}
 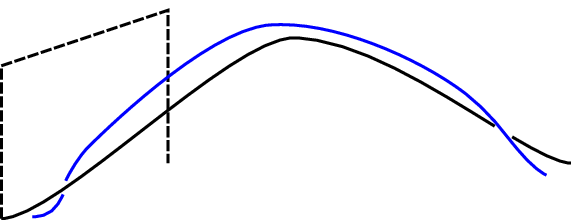
\end{minipage}\qquad
\begin{minipage}[b]{0.2\textwidth}
\def\svgwidth{\textwidth}
 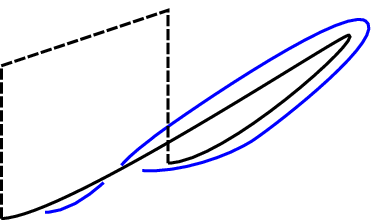
\end{minipage}\qquad
\begin{minipage}[b]{0.25\textwidth}
\def\svgwidth{\textwidth}
 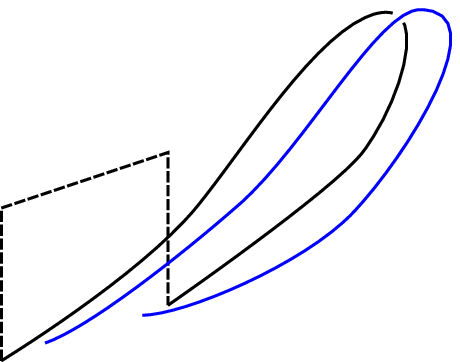
\end{minipage}
\caption{The framing $K_1$ in both cases and the framing $K_2$.}
\label{fig:K}
\end{figure}

\textsc{Step 5}: We now prove the proposition in the general case.

We again let $\vec{\rho}=(\rho_{1,1},\dots,\rho_{1,|\brho_1|},\dots,\rho_{l,1},\dots,\rho_{l,|\brho_l|})$ and $q=|\brho_1|+\dots+|\brho_l|$. Recall that $\iota(\vec{\brho})=\iota(\vec{\rho})$. We define a new Heegaard diagram $(\Sigma',\bm\alpha',\bm\beta',z)$ as follows. The surface $\Sigma'$ is a genus $(g+\max(k,q))$ surface containing $\Sigma$. We add $\max(k,q)$ $\beta$-circles to $\bm\beta$ to obtain $\bm\beta'$ and we denote the new $\beta$-circles by $\beta_i'$. We choose the $\beta$-circles in $\Sigma'\setminus\Sigma$ to be completions of parallel copies of $Z\setminus N(z)$ as in Step 2. To obtain $\bm\alpha'$ from $\bm\alpha$, we close the $\alpha$-arcs and, if $q>k$, we add $(q-k)$ $\alpha$-circles. We denote by $Y'$ the closed three-manifold obtained from $(\Sigma',\bm\alpha',\bm\beta',z)$. We again write $\vec{\rho}=(\rho_1,\dots,\rho_q)$. For each $\rho_i$, we obtain a segment $b_i$ on $\beta_i'$ by translating $\rho_i$ to $\beta_i'$. Now for each $\rho_i$, we extend $B$ into $\Sigma'\setminus\Sigma$ until its boundary hits $b_i$, see Figure~\ref{domain}. As in Step 2, we obtain a domain $B'$. Let $\xx'$ and $\yy'$ be the union of the corresponding intersection points. If~\eqref{eq:cond} does not hold, then $\xx'$ (and consequently $\yy'$) is not a generator of $\widehat{\mathit{CF}}(\Sigma',\bm\alpha',\bm\beta',z)$, since it contains intersection points on the same $\alpha$-circle. Nevertheless $\text{ind}(B')$ can still be defined using the combinatorial formula~(\ref{combind}).

\begin{figure}[h]
    \begin{overpic}[scale=.33]{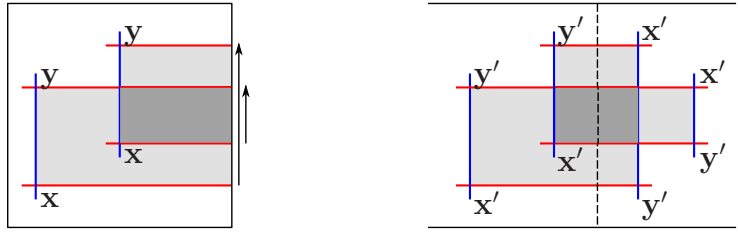}
    \put(4.5,3){$\mathbf{x}$}
    \put(16,9){$\mathbf{x}$}
    \put(16,26){$\mathbf{y}$}
    \put(4.5,20){$\mathbf{y}$}
    \put(64,2){$\mathbf{x}'$}
    \put(87,2){$\mathbf{y}'$}
    \put(87,26){$\mathbf{x}'$}
    \put(75.5,26){$\mathbf{y}'$}
    \put(75.5,8){$\mathbf{x}'$}
    \put(95,8.5){$\mathbf{y}'$}
    \put(95,20){$\mathbf{x}'$}
    \put(64,20){$\mathbf{y}'$}
    \end{overpic}
    \caption{The left side is a domain on $\Sigma$. The right side is the completion of the domain on $\Sigma'$.}
    \label{domain}
\end{figure}

Let $v_{\xx}$, $v_{\yy}$ and $v_{(\s_i,\brho_i)}$ be the vector fields defined in Section~\ref{construction:G} whose homotopy classes are $\gr(\xx)$, $\gr(\yy)$ and $\gr(I(\s_i)a(\brho_i))$, respectively. Let $v_{(\s,\vec{\brho})}=v_{(\s_1,\brho_1)}\cdots v_{(\s_l,\brho_l)}$. We again decompose $Y'$ as $Y'=Y\cup (F\times[0,1])\cup \hat{Y}$ and we consider the $[0,1]$-invariant vector field $\mathbb{I}_{\t}$ in $F\times[0,1]$. As in Step 2, we can extend $v_{\xx}\cdot v_{(\s,\vec{\brho})}$ and $v_{\yy}\cdot\mathbb{I}_{\t}$ to nonvanishing vector fields on $Y'$ which coincide in $\hat{Y}$, denoted by $v_{\xx,\vec{\brho}}$ and $v_{\yy,\t}$, respectively. So we need to prove that
\begin{equation}
[v_{\xx,\vec{\brho}}]=\lambda^{\text{ind}(B,\vec{\brho})-l}\cdot [v_{\yy,\t}].\label{eq:lambda}
\end{equation}
We again define $L_{(\xx,\vec{\brho}),\yy}=\{y\in Y'|v_{\xx,\vec{\brho}}(y)=v_{\yy,\t}(-y)\}.$ For each Reeb chord $\rho_i$, there is a corresponding arc in $L_{(\xx,\vec{\brho}),\yy}\cap (F\times[0,1])$. There are a few more cases to consider than the two cases in Figure~\ref{fig:L}, but in all cases the projection of the arc to $F$ is a slight perturbation of the union of $\hat{\rho}_i$. Moreover, we can assume that the arc corresponding to $\rho_i$ is contained in $F\times[\frac{i-1}{q},\frac{i}{q}]$. So $L_{(\xx,\vec{\brho}),\yy}\cap (F\times[0,1])$ is the union of all these arcs. We can perturb $L_{(\xx,\vec{\brho}),\yy}\cap (F\times[0,1])$ so that all the intersections of its projection to $F$ are transverse. 

Fix a trivialization $\tau$ of $v_{\yy,\t}^{\perp}$ in a small neighborhood of a Seifert surface of $L_{(\xx,\vec{\brho}),\yy}\cap (F\times[0,1])$, which is $[0,1]$-invariant in $F\times[0,1]$. Let $n_{(\xx,\vec{\brho}),\yy}$ denote the framing on $L_{(\xx,\vec{\brho}),\yy}\cap (F\times[0,1])$ obtained from $v_{\xx,\vec{\brho}}$ and $\tau$ as in Proposition~\ref{htpydiff}. By~\eqref{eq:lambda}, it is enough to prove that 
\begin{equation}
n_{(\xx,\vec{\brho}),\yy}=e(B)+n_{\xx}(B)+n_{\yy}(B)+\iota(\vec{\brho}).\label{eq:nxx}
\end{equation}

For a segment $b_i$, we denote the projection of its endpoints to $Z\setminus z$ by $b_i^-$ and $b_i^+$, where $b_i^-<b_i^+$. We note that the segment $b_i$ is specified by the projection of its endpoints and the $\beta$-circle to which it belongs. We say that a pair of segments $\{b_i,b_j\}$ is positively (resp. negatively) interleaved if $b_i^-<b_j^-<b_i^+<b_j^+$ and $i<j$ (resp. $j<i$). We say that $\{b_i,b_j\}$ is positively (resp. negatively) nested if $b_i^-<b_j^-<b_j^+<b_i^+$ and $i<j$ (resp. $j<i$). Finally, we say that $\{b_i,b_j\}$ is positively (resp. negatively) abutting if $b_i^+=b_j^-$ and $i<j$ (resp. $j<i$).

We define a new domain $B''$ by modifying $B'$. At each step, we denote by $b_1,\dots,b_n$ the segments of the corresponding domain on $\beta_i'$, where $n$ may vary after each modification. We will first remove all abutting and interleaved pairs of segments. Let $\{b_i,b_{i+1}\}$ be a pair of segments with consecutive indices. If $\{b_i,b_{i+1}\}$ is positively (resp. negatively) abutting, we substitute this pair by $[b_i^-,b_{i+1}^+]$ (resp. $[b_{i+1}^-,b_i^+]$) on $\beta_i'$. We now move the segments $b_j$ from $\beta_j'$ to $\beta_{j-1}'$ for $j>i+1$. In particular, substituting a pair of consecutive abutting pairs decreases the value of $n$ by $1$. If $\{b_i,b_{i+1}\}$ is interleaved, we substitute it by a nested pair. We can perform these changes until there are no abutting or interleaved pairs with consecutive indices. Now, let $\{b_i,b_j\}$ be an abutting or interleaved pair with $i<j$, such that $\{b_{i},\dots,b_j\}$ does not contain another abutting or interleaved pair. We note that either every pair of segments in $\{b_{i+1},\dots,b_j\}$ is nested or disjoint. We switch the segments $b_{i+1}$ and $b_j$. We proceed as above with the pair $\{b_i,b_{i+1}\}$. We repeat this procedure until there are no more abutting or interleaved pairs. Finally, we switch the order of the segments so that there are no negatively nested pairs. Let $b_1,\dots,b_m$ be the segments obtained from this procedure and let $B''$ be the domain resulting from these segments.

After each step of the above procedure, we obtain a new tuple of Reeb chords, for which we can compute the value of $\iota$. We observe that each time that a negatively abutting pair is concatenated in the above procedure, the value of $\iota$ increases by 1. Concatenating a positively abutting pair does not change $\iota$. For each positively (resp. negatively) interleaved pair that is exchanged by a nested pair, the value of $\iota$ is changed by $-1$ (resp. $+1$). Let $A^-$ denote the number of negatively abutting pairs that were concatenated in the above procedure and let $I^+$ (resp. $I^-$) denote the number of positively (resp. negatively) interleaved pairs that were exchanged by nested pairs in the above procedure. Hence
\begin{align}
 \text{ind}(B'')&=e(B)+n_{\xx}(B)+n_{\yy}(B)+\frac{m}{2},\label{eq:6a}\\
\iota(\vec{\rho})&=-\frac{m}{2}-A^-+I^+-I^-.\label{eq:6b}
\end{align}
Let $\xx''$ and $\yy''$ be the sets of intersection points corresponding to the corners of $B''$. It is still possible that $\xx''$ contains intersection points on the same $\alpha$-circle. That will happen if and only if $\yy''$ contains intersection points on the same $\alpha$-circle. Let us assume first that this does not occur. In this case, for each $i>m$, we choose an intersection point on $\beta_i'$ to add to both $\xx''$ and $\yy''$ so that $\xx''$ and $\yy''$ are generators of $\widehat{\mathit{CF}}(\Sigma',\bm\alpha',\bm\beta',z)$.

We define $L_{\xx'',\yy''}$ and $n_{\xx'',\yy''}$ as in Step 3. We note that $L_{(\xx,\vec{\brho}),\yy}$ and $L_{\xx'',\yy''}$ are cobordant. In fact, for each step of the above procedure, we perform a corresponding $0$-surgery or an isotopy as follows. We start from $L_{(\xx,\vec{\brho}),\yy}$. For a negatively abutting pair that is concatenated in the above procedure, we perform a positive $0$-surgery to the link. For each positively (resp. negatively) interleaved pair that is exchanged by a nested pair, we perform a negative (resp. positive) 0-surgery to the link. Finally, when we concatenate a positively abutting pair or when we exchange a nested or disjoint pair, we simply isotope the link. The resulting link can now be isotoped to $L_{\xx'',\yy''}$. Let $S_2$ be a Seifert surface for $L_{\xx'',\yy''}$, as in Step 4. We can choose an embedded Seifert surface $S_2$ for $L_{\xx'',\yy''}$ which is a slight pertubation of the union of $B''$ with the corresponding stable and unstable submanifolds, such that $S_2\cap Y=S_1\cap Y$. So we can extend the trivialization of $v_{\yy,\t}^{\perp}|_{N(S_1\cap Y)}$ to a trivialization $\tau_{S_2}$ of $v_{\yy''}^\perp|_{N(S_2)}$ and we obtain a link $K_2$, as in Step 4. We observe that $K_2\setminus Y$ and $S_2\setminus Y$ do not intersect, as before. Using the cobordism from $L_{(\xx,\vec{\rho}),\yy}$ to $L_{\xx'',\yy''}$, the link $K_1$ induces a link $\tilde{K}_1$ about $L_{\xx'',\yy''}$. For each $1\le i\le m$, we obtain a negative intersection of $\tilde{K}_1$ and $S_2$. Moreover, for each $0$-surgery that we performed, we obtain an extra intersection. Therefore
\begin{equation}
 n_{(\xx,\vec{\brho}),\yy}-n_{\xx'',\yy''}=-m-A^++I^+-I^-.\label{eq:diff'}
\end{equation}
Since $\text{ind}(B'')=n_{\xx'',\yy''}$ and $\iota(\vec{\brho})=\iota(\vec{\rho})$, it follows from~\eqref{eq:6a},~\eqref{eq:6b} and~\eqref{eq:diff'} that~\eqref{eq:nxx} holds.

Now assume that $\xx''$ contains intersection points on the same $\alpha$-circle. We modify the Heegaard diagram and the domain $B''$ as follows. We first note that any $\alpha$-circle contains as many points in $\xx''$ as in $\yy''$. Let $r_i$ denote the number of points in $\xx''$ contained in $\alpha_i'$. For each $i$ such that $r_i>1$, let $\Sigma_i$ be a surface of genus $(r_i-1)$ and let $d_i$ be a point in $\alpha_i'\cap \hat{Y}$. We consider the connect sum of $\Sigma'$ with the surfaces $\Sigma_i$, where the connect sum with $\Sigma_i$ is performed by removing a small disk centered at $d_i$. Now we add $(r_i-1)$ $\alpha$-circles, which are translates of $\alpha_i$ in $\Sigma'$ and are given by the model of Figure~\ref{fig:surface} on $\Sigma_i$. We also add $(r_i-1)$ $\beta$-circles on $\Sigma_i$, as in Figure~\ref{fig:surface}. We isotope these $\beta$-circles so that they intersect all $\alpha$ circles in $\Sigma'$. After this modification, we obtain a new Heegaard diagram for $Y'$. We can now move the points which lie in both $\xx''$ and $\yy''$, which are on the same $\alpha$-circles to distinct ones and we obtain\footnote{We also have to add intersection points to both $\xx''$ and $\yy''$ on all $\alpha$ and $\beta$-circles, which do not contain an intersection point yet. We choose such points in the complement of $B'''$.} $\xx'''$, $\yy'''$ and a new domain $B'''\in\pi_2(\xx''',\yy''')$ in the new Heegaard diagram. We observe that we can choose $\xx'''$, $\yy'''$ and $B'''$ so that $\text{ind}(B'')=\text{ind}(B''')$. 
If $o(\xx)\cap o(\yy))\setminus o(\xx \cap\yy)=\emptyset$, then the argument from the above paragraph works if we exchange $\xx''$, $\yy''$ and $B''$ by $\xx'''$, $\yy'''$ and $B'''$. If $o(\xx)\cap o(\yy))\setminus o(\xx \cap\yy)\neq\emptyset$, then $L_{(\xx,\vec{\brho}),\yy}$ and $L_{\xx''',\yy'''}$ do not coincide in $Y$, since $\yy$ is not a subset of $\yy'''$. In this case, we write $\widetilde{\yy}=\yy'''\cap \Sigma$ and we denote by $\vec{\bm\sigma}$ the modification of $\vec{\brho}$ obtained by substituting $\yy$ by $\tilde{\yy}$. We then observe that the links $L_{(\xx,\vec{\brho}),\yy}$ and $L_{(\xx,\vec{\bm\sigma}),\tilde{\yy}}$ are framed homotopic, so~\eqref{eq:nxx} holds in all cases.

\vspace{1cm}

\begin{figure}[htb]
\centering \def\svgwidth{100pt}
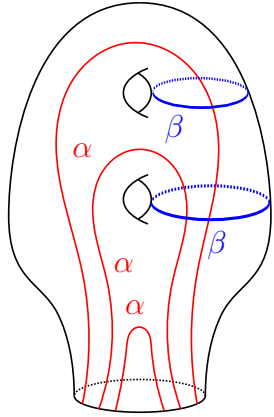 
\caption{The $\alpha$ and $\beta$-curves on the surface $\Sigma_i$.}\label{fig:surface}
\end{figure}

\end{proof}

Theorem~\ref{thm:mod}(a) is an immediate corollary of Proposition~\ref{index}.

\subsection{The grading on $\widehat{\mathit{CFD}}(\H)$}
We start by recalling the definition of the module $\widehat{\mathit{CFD}}(\H)$.
For $\xx\in\mathfrak{S}(\H)$, let $\bar{o}(\xx)=[2k]\setminus o(\xx)$ and define $I_D(\xx)=I([2k]\setminus o(\xx))$. We have a left action of the set of idempotents $\mathcal{I}$ on $\mathfrak{S}(\mathcal{H})$ given by \begin{equation*}
 I(\s)\cdot \xx=\left\{\begin{array}{ll}
                 \xx,&\text{if }I_D(\xx)= I(\s),\\
		 0,&\text{otherwise}.
                \end{array}\right.
\end{equation*}
The module $\widehat{\mathit{CFD}}(\H)$ is generated over $\Z/2$ by the elements of the form $a\otimes \xx$, where $a\in \A(-\ZZ)$ and $\xx\in\mathfrak{S}(\H)$, and the tensor is taken over $\mathcal{I}$. Its module structure is given by the obvious left $\A(-\ZZ)$-action.

We can define the grading $\gr$ on a generator $a(-\brho)\otimes\xx$ of $\widehat{\mathit{CFD}}(\H)$ by
$$\gr(a(-\brho)\otimes\xx):=\gr(a(-\brho)I_D(\xx))\cdot \gr(\xx).$$
The differential $\partial$ on $\widehat{\mathit{CFD}}(\H)$ is defined in~\cite{lot} by counting moduli spaces of holomorphic curves of the form $\mathcal{M}^B(\xx,\yy,\vec{\rho})$, where $\vec{\rho}=(\rho_1,\dots,\rho_l)$ is a sequence of Reeb chords. More precisely $\partial(I_{D}(\xx)\otimes \xx)$ is a sum of terms of the form $a(-\vec{\rho})\otimes \yy$, where $B\in\pi_2(\xx,\yy)$ and $\text{ind}(B,\vec{\rho})=1$. Here $-\vec{\rho}$ denotes $(-\rho_1,\dots,-\rho_l)$ and $a(-\vec{\rho})$ denotes the product $a(-\rho_1)\dots a(-\rho_l)$.

\begin{prop}
Let $\mathbf{x},\mathbf{y} \in \mathfrak{S}(\mathcal{H})$, $B\in\pi_2(\xx,\yy)$ and $\vec{\rho}$ such that $\partial^{\partial} B=[\vec{\rho}]$. If $a(-\vec{\rho})\otimes\yy\neq 0$, then \begin{equation}\label{eq:grcfd}
\text{\em gr}(a(-\vec{\rho})I_D(\yy))\cdot\text{\em gr}(\mathbf{y})= \lambda^{- \text{\em ind}(B,\vec{\rho})} \text{\em gr}(\xx).\end{equation}
\end{prop}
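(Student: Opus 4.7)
The plan is to mirror the proof of Proposition~\ref{index}, replacing the right $G(\ZZ)$-action by the left $G(-\ZZ)$-action. By the definition of that action in \S\ref{grading:set}, the element $\gr(a(-\vec{\rho})I_D(\yy))\cdot\gr(\yy)$ is represented by the vector field $v_{\yy,-\vec{\rho}}:=\bar w\cdot v_\yy$ on $Y\cup_F(F\times[0,1])$, where $w$ is the concatenation on $(-F)\times[0,1]$ of the vector fields from \S\ref{construction:G} realizing the gradings of the factors $a(-\rho_i)$ (with appropriate idempotents), and $\bar w(x,t)=w(x,1-t)$. Thus \eqref{eq:grcfd} becomes the Pontryagin--Thom statement
$$[v_\xx]=\lambda^{\text{ind}(B,\vec{\rho})}[v_{\yy,-\vec{\rho}}]\quad\text{in }\Vect(Y, v_{o(\xx)}).$$

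First, I would reduce to the simplifying case from Step~1 of the proof of Proposition~\ref{index}: the chords $\rho_1,\ldots,\rho_q$ are pairwise disjoint and satisfy the analog of \eqref{eq:cond}. The general case then follows from the handle-creation and 0-surgery bookkeeping of Step~5 of that proof, whose correction terms $A^-,I^+,I^-$ encode the difference between $\iota(\vec{\rho})$ and $-q/2$ in exactly the same way.

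Second, I would form the closed manifold $Y'=Y\cup_F(F\times[0,1])\cup_F\hat Y$ with the Heegaard diagram from Step~2 of Proposition~\ref{index} and the extended domain $B'\in\pi_2(\xx',\yy')$ satisfying $\text{ind}(B')=e(B)+n_\xx(B)+n_\yy(B)+q/2$. Extending $v_\xx$ as $[0,1]$-invariant on the collar, and extending $v_{\yy,-\vec{\rho}}$ to all of $Y'$ so that the two vector fields agree on $\hat Y$, produces $v_{\xx,+},v_{\yy,-\vec{\rho},+}\in\Vect(Y')$. Combined with \eqref{eq:grcf} and Proposition~\ref{htpydiff}, the problem reduces to computing the framing $n_L$ of the Pontryagin--Thom link $L=\{y\in Y'\mid v_{\xx,+}(y)=-v_{\yy,-\vec{\rho},+}(y)\}$, whose relevant part inside $Y$ coincides with the one from Step~4 of Proposition~\ref{index}.

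Third, the local contribution of each Reeb chord arc $\hat\rho_i$ in the collar is computed exactly as in Step~4 of Proposition~\ref{index}, using a Seifert surface built from stable/unstable submanifolds along $\partial B'$ and the analogous trivialization of $v_{\yy,-\vec{\rho},+}^\perp$ via the auxiliary vector field $V$. The difference from the CFA case arises only in this local model, where the time-reversal $\bar w(x,t)=w(x,1-t)$ swaps the initial and terminal boundary labelings and exchanges the roles $(-\rho)^\pm=\rho^\mp$; working through the same enumeration of four local pictures (compare Figures~\ref{rho_sigma} and~\ref{fig:K}) produces the local count needed to conclude $n_L=\text{ind}(B')=\text{ind}(B,\vec{\rho})$ in the disjoint case. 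By Proposition~\ref{htpydiff}(b) this yields the displayed identity, which is \eqref{eq:grcfd} after rearrangement.

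The main obstacle is this local framing bookkeeping for $\bar w$: one must check that the $-q$ correction present in Step~4 of Proposition~\ref{index} is absent here, the time-reversal contributing a compensating sign at each Reeb chord so that the Pontryagin--Thom link in the collar has no net signed intersection with the Seifert surface. This requires redrawing the local models of Figures~\ref{fig:L}, \ref{fig:S}, and~\ref{fig:K} with the reflected vector field, keeping careful track of the relabeling $(-\rho)^\pm=\rho^\mp$ when identifying the trivialization $V$ with its CFA counterpart. Once these local computations are carried out, the rest of the argument is a direct translation of the proof of Proposition~\ref{index}.
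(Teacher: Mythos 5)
Your proposal is correct and follows essentially the same route as the paper: reduce to the case of pairwise disjoint chords satisfying the analog of \eqref{eq:cond}, pass to the closed manifold $Y'$ and the extended domain $B'$ with $\mathrm{ind}(B')=\mathrm{ind}(B,\vec{\rho})$ (since here each $\brho_i$ is a singleton, so $l=q$), and then observe that for the reflected vector field the link $K_1\setminus \bar{Y}$ has zero signed intersection with $S_1\setminus\bar{Y}$, so the $-q$ correction of the $\widehat{\mathit{CFA}}$ case is absent. The paper likewise simply asserts this last local observation after redrawing the collar models, so your level of detail matches its proof.
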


\begin{proof}
The proof is very to similar to that of Proposition~\ref{index}. We assume, for simplicity, that~\eqref{eq:cond} holds and that the Reeb chords are all pairwise disjoint. Otherwise, we can apply similar arguments to Step 5 of the proof of Proposition~\ref{index}.
 
We again construct a closed manifold $$Y'=\hat{Y}\cup_{\bar{F}}\cup\bar{F}\times[0,1]\cup_{\bar{F}}\bar{Y}.$$
And we extend $\H$ to a Heegaard decomposition of $Y'$ so that the new $\beta$-curves are translates of the Reeb chords. We again obtain generators $\xx'$, $\yy'$ of $\widehat{\mathit{CF}}(Y')$ and a homology class $B'\in\pi(\xx',\yy')$. So it follows from~\eqref{eq:B'} that $\text{ind}(B')=\text{ind}(B,\vec{\rho}).$
Now we compare $\gr(a(-\vec{\rho})I_D(\yy))\cdot \gr(\yy)$ and $[\mathbb{I}]\cdot \gr(\xx)$ in $(\bar{F}\times[0,1])$ where $\mathbb{I}$ is the $[0,1]$-invariant vector field which coincides with $\gr(\xx)$ along $\bar{F}\times\{1\}$. As in Step 3 of the proof of Proposition~\ref{index}, we obtain a link whose intersection with $Y'\setminus\bar{Y}$ is the union of arcs, one for each Reeb chord. We also consider a surface $S_1$ bounding this link and a link $K_1$ obtained by taking the preimage of $(\delta,0,-\sqrt{1-\delta^2})$, as in Step 4 of the proof above. We observe that, in this case, $K_1\setminus \bar{Y}$ does not intersect $S_1\setminus\bar{Y}$. So the framing equals $n_{\xx',\yy'}$. Therefore
$$\gr(\xx)=\lambda^{\text{ind}(B')}\cdot\gr(a(-\vec{\rho})I_D(\yy))\cdot \gr(\yy).$$
That implies our claim.
\end{proof}
We have therefore proven Theorem~\ref{thm:mod}(b).

\section{The pairing theorems}\label{sec:pair}
Our absolute grading is also compatible with the pairing theorems proved in~\cite{lot}. More precisely, given two bordered Heegaard diagrams $\mathcal{H}_1$ and $\mathcal{H}_2$ for $Y_1$ and $Y_2$, respectively, with $\partial\mathcal{H}_1 = -\partial\mathcal{H}_2$, we obtain a Heegaard diagram $\mathcal{H}=\mathcal{H}_1 \cup_\partial \mathcal{H}_2$ for the closed manifold $Y:=Y_1 \cup_\partial Y_2$. Let $F=\partial Y_1= -\partial Y_2$ be the parameterized boundary.

Recall that the box tensor product $\widehat{\mathit{CFA}}(Y_1)\boxtimes\widehat{\mathit{CFD}}(Y_2)$ is $\mathfrak{S}(\H_1)\otimes_{\mathcal{I}(\ZZ)}\mathfrak{S}(\H_2)$ as a set. See~\cite[Def. 2.26]{lot} for the definition of the differential. If $\xx_1\in\mathfrak{S}(\H_1)$ and $\xx_2\in\mathfrak{S}(\H_2)$, such that $\xx_1\otimes\xx_2\in \widehat{\mathit{CFA}}(Y_1)\boxtimes\widehat{\mathit{CFD}}(Y_2)$ is nonzero, then $\xx_1$ and $\xx_2$ must lie on complementary $\alpha$-arcs. Therefore the pair $(\xx_1,\xx_2)$ corresponds to a generator of $\widehat{\mathit{CF}}(Y)$. So there is a canonical map \begin{equation}\label{homequiv}\Phi:\widehat{\mathit{CFA}}(Y_1)\boxtimes\widehat{\mathit{CFD}}(Y_2)\to \widehat{\mathit{CF}}(Y).\end{equation} We recall the following theorem from~\cite{lot}.

\begin{thm}[{\cite[Thm. 1.3]{lot}}]
The map \eqref{homequiv} is a homotopy equivalence.
\end{thm}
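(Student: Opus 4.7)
The plan is to follow a neck-stretching argument in the style of symplectic field theory. First, I would identify the generators on both sides. A nonzero generator $\xx_1 \otimes \xx_2$ of $\widehat{\mathit{CFA}}(Y_1)\boxtimes\widehat{\mathit{CFD}}(Y_2)$ requires $I_A(\xx_1) = I_D(\xx_2)$, which forces $\xx_1$ and $\xx_2$ to occupy complementary subsets of $\alpha$-arcs. Then $\xx_1 \cup \xx_2$ is naturally a generator of $\mathfrak{S}(\H)$, since the $\alpha$-arcs in $\H_1$ and $\H_2$ pair up along $\partial\Sigma_1 = -\partial\Sigma_2$ to close up into the $\alpha$-circles of $\H$. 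Hence $\Phi$ is tautologically a bijection on generators.

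The main step is to match the differentials. The plan is to pick an almost complex structure on $\Sigma \times [0,1] \times \R$ that is cylindrical in a neighborhood of the separating curve $\partial \Sigma_1 \subset \Sigma$, and to stretch the neck along it. A sequence of holomorphic disks contributing to the differential on $\widehat{\mathit{CF}}(\H)$ should then converge, by an SFT-type compactness argument, to a broken holomorphic building consisting of one curve in $\Sigma_1 \times [0,1] \times \R$ (counted by one of the $\mathcal{A}_\infty$ operations $m_{l+1}$ of $\widehat{\mathit{CFA}}(\H_1)$) together with a collection of curves in $\Sigma_2 \times [0,1] \times \R$ (counted by the differential of $\widehat{\mathit{CFD}}(\H_2)$), matched along their Reeb chord asymptotics at $\partial\Sigma_1 \times \R$. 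This is exactly the box tensor product differential of~\cite[Def.~2.26]{lot}, so for sufficiently stretched neck $\Phi$ becomes a chain map. A standard gluing theorem reverses the degeneration, showing that every matched pair of curves arises in this way, so the induced differentials agree on the nose.

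Once $\Phi$ is a chain map and a bijection on generators, it is automatically a chain isomorphism for the neck-stretched diagram, and in particular a homotopy equivalence. Since invariance of $\widehat{\mathit{CF}}$ under change of Heegaard diagram is already known, one recovers the statement for any choice of $\H = \H_1 \cup_\partial \H_2$.

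The hard part will be the analysis: establishing compactness and gluing for holomorphic curves with Reeb chord asymptotics at an ordered sequence of chords that may collide, ruling out unwanted bubbling (disk bubbles, boundary degenerations, sphere and ghost components), and verifying transversality for the relevant moduli spaces in the partially cylindrical setting. These issues, along with the careful bookkeeping needed to identify the ordered sequence of chords coming out of the degeneration with the ordered inputs of the $\mathcal{A}_\infty$ operations, occupy substantial portions of~\cite{lot} and are where the bulk of the technical work lies.
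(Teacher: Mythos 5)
First, a point of bookkeeping: this theorem is not proved in the paper at all --- it is quoted from \cite[Thm.~1.3]{lot} and used as a black box (the paper only records the tautological bijection on generators, which it needs to define $\Psi$). So there is no ``paper's own proof'' to compare against; what you have written is an outline of the proof in \cite{lot} itself.

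As such an outline it has the right overall shape: the identification of generators is exactly as you say, and the analytic proof in \cite{lot} does degenerate the Heegaard surface along $\partial\Sigma_1$ and match holomorphic buildings across the neck. The one place where your sketch, taken literally, would not yield the stated result is the identification of the limiting count with the box-tensor differential. A bare neck-stretch produces, via SFT compactness, pairs $(u_1,u_2)$ of curves in $\Sigma_1\times[0,1]\times\R$ and $\Sigma_2\times[0,1]\times\R$ whose east-infinity Reeb chords are required to occur at \emph{matching heights} in the $\R$-coordinate. The differential on $\widehat{\mathit{CFA}}(\mathcal{H}_1)\boxtimes\widehat{\mathit{CFD}}(\mathcal{H}_2)$, by contrast, pairs an $m_{l+1}$-count on the $A$ side --- which has forgotten the relative heights of its $l$ algebra inputs --- with an iterated coproduct on the $D$ side; these are not the same moduli problem, and equating them is the central idea of the proof rather than ``bookkeeping.'' In \cite{lot} this is handled either by deforming the height-matching condition by a time-dilation parameter $T$ and showing the count stabilizes to the $\boxtimes$ differential as $T\to\infty$, or by first identifying the limit with the $\mathcal{A}_\infty$ tensor product $\widetilde\otimes$ and then invoking the equivalence $\widetilde\otimes\simeq\boxtimes$ available for type $D$ structures. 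There is also a softer alternative, via nice diagrams in the sense of Sarkar--Wang, which makes all curve counts combinatorial and bypasses the SFT analysis entirely; if you only need the statement as quoted here (a homotopy equivalence, for some choice of diagram), that route is considerably cheaper than the program you describe.
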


Let $S(\H_1)\times_F S(\H_2)$ denote the set of elements of the form $([v_1],[v_2])$ with $[v_1]\in S(\H_1)$ and $[v_2]\in S(\H_2)$, such that $[v_1]$ and $[v_2]$ agree along $F$. Recall that $G(\ZZ_1)=G(-\ZZ_2)$ acts on $S(\H_1)$ on the right and on $S(\H_2)$ on the left. We now define $S(\H_1)\otimes_{G(\ZZ_1)} S(\H_2)$ to be the quotient of $S(\H_1)\times_F S(\H_2)$ by the equivalence relation given by $(\xi_1\cdot a, \xi_2)\sim (\xi_1,a\cdot\xi_2)$, where $\xi_i\in S(\H_i)$ for $i=1,2$ and $a\in G(\ZZ_1)$. Recall from~\cite{gh} that the absolute grading on $\widehat{\mathit{CF}}(Y)$ takes values in $\Vect(Y)$. Now given nonvanishing vector fields $v_1$ in $Y_1$ and $v_2$ in $Y_2$, which agree along $\partial Y_1 =-\partial Y_2$, we obtain a vector field $v_1 \cdot v_2$ on $Y$ by gluing along the boundary. Therefore we obtain a map $$\Psi:S(\H_1)\otimes_{G(\ZZ_1)} S(\H_2)\to \Vect(Y).$$
We have the following proposition.
\begin{prop}
 The map $\Psi$ is a bijection.
\end{prop}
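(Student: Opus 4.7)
The plan is to establish that $\Psi$ is both surjective and injective, using the Pontryagin-Thom framework of \S\ref{sec:htpy}.

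\emph{Surjectivity.} Given $[v] \in \Vect(Y)$, I would first homotope $v$ through nonvanishing vector fields on $Y$ so that on a collar $F \times [-\epsilon,\epsilon] \subset Y$ (with $F \times [0,\epsilon] \subset Y_1$ and $F \times [-\epsilon,0] \subset Y_2$), $v$ is invariant in the collar direction and equals $v_\s$ on $F$ for some $\s \subset [2k]$ with $|\s| = k$. This is possible because every nonvanishing section of $TF \oplus \underline{\R}$ that extends over $Y$ is homotopic to such a $v_\s$: after trivializing, sections of $TF \oplus \underline{\R}$ correspond to maps $F \to S^2$ classified by degree, and a direct Pontryagin-Thom count at the index-one critical points shows that all $v_\s$ with $|\s| = k$ represent the same degree, namely the one detected by the extendability condition recorded in \S\ref{grading:set}. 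The homotopy of $v|_F$ then lifts to a homotopy of $v$ on all of $Y$ by the relative homotopy extension property (the inclusion $F \hookrightarrow Y$ is a cofibration). Once $v$ has the desired form, setting $v_i := v|_{Y_i}$ produces $([v_1], [v_2]) \in S(\H_1) \times_F S(\H_2)$ with $\Psi([v_1] \otimes [v_2]) = [v]$.

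\emph{Injectivity.} Suppose $\Psi([v_1] \otimes [v_2]) = \Psi([v_1'] \otimes [v_2'])$, and fix a homotopy $\{H_r\}_{r \in [0,1]}$ of nonvanishing vector fields on $Y$ from $v_1 \cdot v_2$ to $v_1' \cdot v_2'$. After a small perturbation, $w(x,r) := H_r|_F(x)$ is a nonvanishing vector field on $F \times [0,1]$, going from $v_\s$ (the common boundary value of $v_1, v_2$) to $v_{\s'}$ (the common boundary value of $v_1', v_2'$), and so represents an element $[w] \in G(\s, \s') \subset G(\ZZ_1) = G(-\ZZ_2)$. Restricting $H$ to $Y_1 \times [0,1]$ and collapsing the $[0,1]$-factor into a collar of $F$ inside $Y_1$---a standard straightening argument, which at the level of Pontryagin submanifolds from \S\ref{sec:htpy} shows that the Pontryagin submanifold of the homotopy is relatively framed cobordant to the union of the Pontryagin submanifold of $v_1$ with that of $w$ in the collar---yields $[v_1'] = [v_1] \cdot [w]$ in $S(\H_1)$. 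The analogous straightening on $Y_2 \times [0,1]$ yields the companion relation $[w] \cdot [v_2'] = [v_2]$ in $S(\H_2)$; the inverse direction on the $Y_2$-side, encoded by the time reversal $\bar w$ in the paper's definition of the left action, precisely reflects the opposite orientation of the collar of $-F = \partial Y_2$ relative to that of $F = \partial Y_1$. Combining these via the defining equivalence of the tensor product gives
$$[v_1'] \otimes [v_2'] = ([v_1] \cdot [w]) \otimes [v_2'] = [v_1] \otimes ([w] \cdot [v_2']) = [v_1] \otimes [v_2],$$
proving injectivity.

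The main technical obstacle is verifying the precise form of the $Y_2$-side relation $[w] \cdot [v_2'] = [v_2]$: one must carefully track orientations through the Pontryagin-Thom construction of \S\ref{sec:htpy} and through the collar identifications to confirm that the time-reversal in the left action definition makes the two insertions---$w$ in the collar of $F$ inside $Y_1$, and $\bar w$ in the collar of $-F$ inside $Y_2$---cancel in the glued collar of $F$ inside $Y$, so that the pair $([v_1'], [v_2'])$ indeed descends to $([v_1], [v_2])$ in the quotient. This is essentially the same orientation check that ensures $\Psi$ is well-defined on the tensor product in the first place, and can be carried out by comparing the framed Pontryagin submanifolds of the two glued vector fields $v_1 \cdot v_2$ and $(v_1 \cdot w) \cdot ([w] \cdot [v_2'])$ on $Y$.
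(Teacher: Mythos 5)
Your proof is correct and takes essentially the same route as the paper's: surjectivity rests on the observation that $v|_F$ and $v_{\s}$ are homotopic as maps $F\to S^2$ because both extend over $Y_1$ (equivalently, the restriction $H^2(Y_1;\Z)\to H^2(F;\Z)$ vanishes), and injectivity comes from restricting the homotopy on $Y$ to $F\times[0,1]$, straightening it into the collars on each side, and absorbing the resulting element of $G(\ZZ_1)$ across the tensor relation. The only cosmetic difference is that you homotope $v$ globally (via the homotopy extension property) before splitting it along $F$, whereas the paper splits first and then inserts $u\cdot\bar{u}$ in a collar; your explicit statement of the two straightening relations $[v_1']=[v_1]\cdot[w]$ and $[v_2]=[w]\cdot[v_2']$, in the form that actually telescopes under the defining relation of the tensor product, is a welcome clarification of the orientation bookkeeping that the paper leaves implicit.
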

\begin{proof}
To show that $\Psi$ is surjective, let $v$ be a nonvanishing vector field on $Y$ and write $v=v_1\cdot v_2$, where $v_1$ and $v_2$ are nonvanishing vector fields on $Y_1$ and $Y_2$, respectively. Now we fix a trivialization of $TY$, and hence a trivialization of $TY|_F$. By the Pontryagin-Thom construction, two maps $F\to S^2$ are isomorphic if, and only if, their pullbacks of the generator of $H^2(S^2;\Z)$ coincide. We observe that the pullback map $\iota^*:H^2(Y_1,\Z)\to H^2(F)$ is trivial. Hence $v_1|_F$ is homotopic to the constant map $F\to S^2$. Now fix $\s\subset[2k]$, such that $|\s|=k$. Since we can extend $v_{\s}$ to a vector field in $Y$, it follows that $v_{\s}$ is again homotopic to the constant map. Therefore there exists a nonvanishing vector field $u$ in $F\times[0,1]$ such that $u|_{F\times\{0\}}=v_1$ and $u|_{F\times\{1\}}=v_{\s}$. Let $\bar{u}$ denote the inverse of the homotopy determined by $u$. It follows that $v_1\cdot u\cdot \bar{u}\cdot v_2$ is homotopic to $v_1\cdot v_2$. So $\Psi([v_1\cdot u]\otimes [\bar{u}\cdot v_2])=[v_1\cdot v_2]$. Hence $\Psi$ is surjective.

Now let $[v_1],[w_1]\in S(\H_1)$ and $[v_2],[w_2]\in S(\H_2)$ such that $\Psi([v_1]\otimes[v_2])=\Psi([w_1]\otimes[w_2])$. So $[v_1\cdot v_2]=[w_1\cdot w_2]$ as elements in $\Vect(Y)$. Let $H:Y\times[0,1]$ denote the homotopy from $v_1\cdot v_2$ to $w_1\cdot w_2$. Let $u$ be the restriction of $H$ to $F\times[0,1]$. So $u|_{F\times\{0\}}=v_1|_F$ and $u|_{F\times\{1\}}=w_1|_F$. We observe that $[v_1\cdot u]=[w_1]\in S(\H_1)$ and that $[\bar{u}\cdot v_1]=[w_2]\in S(\H_2)$. So
$$[v_1]\otimes[v_2]=[v_1\cdot u]\otimes[\bar{u}\cdot v_2]=[w_1]\otimes[w_2]\in S(\H_1)\otimes_{G(\ZZ_1)} S(\H_2).$$
Therefore $\Psi$ is injective.
\end{proof}

We can now prove that the map \eqref{homequiv} preserves the absolute grading.

\begin{thm}
Given $\xx_1\in\mathfrak{S}(\H_1)$ and $\xx_2\in\mathfrak{S}(\H_2)$, such that $\xx_1\otimes\xx_2\neq 0$. Then
$$\widetilde{\text{\em gr}}(\Phi(\xx_1\otimes\xx_2))=\Psi(\text{\em \gr}(\xx_1)\otimes\text{\em \gr}(\xx_2)).$$
\end{thm}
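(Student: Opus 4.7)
The plan is to reduce the theorem to the geometric statement that two specific nonvanishing vector fields on $Y$ are relatively homotopic, and then to exploit the fact that the bordered modifications of the gradient field in~\S\ref{gr:cfa} are by design restrictions of the closed modifications of~\cite{gh}.

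Unwinding the definitions, $\Phi(\xx_1 \otimes \xx_2) = (\xx_1,\xx_2) \in \mathfrak{S}(\H)$ is the corresponding generator of $\widehat{\mathit{CF}}(\H)$ for the closed Heegaard diagram $\H := \H_1 \cup_F \H_2$. Let $v_{\xx_i}$ denote the representative of $\gr(\xx_i) \in S(\H_i)$ built in~\S\ref{gr:cfa}, and let $w_{(\xx_1,\xx_2)}$ denote the representative of $\widetilde{\gr}(\xx_1,\xx_2) \in \Vect(Y)$ built in~\cite{gh}. The hypothesis $\xx_1 \otimes \xx_2 \neq 0$ forces $I_A(\xx_1) = I_D(\xx_2)$, i.e.\ $o(\xx_1) = [2k] \setminus o(\xx_2)$; by Definition~\ref{def:v} this is exactly the condition needed for the boundary values $v_{o(\xx_1)}$ on $F = \partial Y_1$ and $v_{o(\xx_2)}$ on $-F = \partial Y_2$ to agree (the inward-normal convention implicit in $\partial/\partial t$ flips between the two sides, so a ``$+\partial/\partial t$'' on the $Y_1$-side at a point $p$ matches a ``$-\partial/\partial t$'' on the $Y_2$-side at $p$). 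Therefore $v_{\xx_1} \cdot v_{\xx_2}$ is a well-defined nonvanishing vector field on $Y$, and by the construction of $\Psi$ it represents $\Psi(\gr(\xx_1) \otimes \gr(\xx_2))$. Thus the theorem reduces to showing $[v_{\xx_1} \cdot v_{\xx_2}] = [w_{(\xx_1,\xx_2)}]$ in $\Vect(Y)$.

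Next, fix compatible Morse functions $h_i \colon Y_i \to [0,3]$ with $h_1|_F = h_2|_F$; since $\nabla h_i$ is tangent to $F$ and both gradients agree there, they glue to $\nabla h$ for a Morse function $h \colon Y \to [0,3]$ compatible with $\H$ (boundary index $0,1,2$ critical points of $h_i|_F$ become the unique index $0$, boundary index $1$, and unique index $3$ critical points of $h$, and interior critical points are inherited). By construction the local modifications of $\nabla h_i$ used to define $v_{\xx_i}$ in~\S\ref{gr:cfa} (see Figures~\ref{nbhd2} and~\ref{nbhd}) are precisely the restrictions to the half-ball neighborhoods in $Y_i$ of the modifications of $\nabla h$ used in~\cite{gh}. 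Consequently, at $\gamma_0$ and at each flow line $\gamma_j$ through a boundary index 1 critical point $p \in o(\xx_i)$ on the $Y_i$-side, the two half-ball modifications from $Y_1$ and $Y_2$ glue back to the full closed ball modification on the nose, so $v_{\xx_1}\cdot v_{\xx_2}$ coincides with $w_{(\xx_1,\xx_2)}$ in these regions. For interior flow lines the bordered and closed modifications are identical by definition, so the two vector fields coincide there as well.

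The only nontrivial comparison is at a boundary index 1 critical point $p$ on the side opposite to the one containing the flow line through $p$. Say $p \in o(\xx_1)$: then in $Y_1$ the flow line through $p$ contributes the half-ball modification (matching the closed construction), while in $Y_2$ the bordered construction uses only a small inward-pointing perturbation of $\nabla h_2$ in a ball $B \subset Y_2$ around $p$, whereas the closed construction restricts to $B$ as the tail of the ball modification around $\gamma_j \subset Y_1$. Both are nonvanishing vector fields on $B$ agreeing with $\nabla h$ on $\partial B$; applying the relative Pontryagin--Thom construction, their Pontryagin submanifolds in $B$ are both single framed arcs with the same standard framing near $\partial B$, and a direct check shows they are framed cobordant rel $\partial B$. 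Hence they are relatively homotopic, and assembling these local homotopies over all critical points of $h$ yields a global relative homotopy $v_{\xx_1}\cdot v_{\xx_2} \simeq w_{(\xx_1,\xx_2)}$, as desired. The main obstacle is precisely this last local verification: one must identify the framed Pontryagin submanifolds of the inward-pointing perturbation and of the tail of the closed ball modification in $B$, and check that they are framed cobordant relative to $\partial B$. This is a local computation in the spirit of Step 4 of the proof of Proposition~\ref{index}.
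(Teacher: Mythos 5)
Your proposal is correct and takes essentially the same route as the paper, whose entire proof is the one-line assertion that the claim ``follows immediately from our construction of the gradings in \S3.3 and from the definition of the grading in~\cite[\S2]{gh}''; you have simply unpacked why the bordered modifications of $\nabla h_i$ glue to the closed modification of $\nabla h$. The only place you go beyond the paper is in isolating the matching at a boundary index-one critical point lying on the side opposite its flow line (inward perturbation versus the tail of the closed ball modification), and the framed-cobordism check you defer there is exactly the kind of local Pontryagin--Thom computation the paper's constructions are designed to make trivial, so nothing is missing.
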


\begin{proof}
This follows immediately from our construction of the gradings in \S3.3 and from the definition of the grading on Heegaard Floer homology in~\cite[\S2]{gh}.
\end{proof}

\vspace{10mm}
\noindent
{\em Acknowledgements}. We thank Robert Lipshitz for discussions on the groupoid and Patrick Massot for suggesting the work of Dufraine on homotopy classes of vector fields to us. We also thank the referees for very detailed comments and suggestions.

\end{document}